\newtheorem{theorem}{Theorem}
\newtheorem{lemma}[theorem]{Lemma}
\newtheorem{corollary}[theorem]{Corollary}
\newtheorem{remark}[theorem]{Remark}
\newtheorem{definition}[theorem]{Definition}
\newtheorem{propositionletter}{Proposition}
\newtheorem{lemmaletter}{Lemma}
\newenvironment{acknowledgement}{\noindent\textbf{Acknowledgments.}}{}
\newcommand{\innerthmname}{}
\newenvironment{statement}[1]
{\renewcommand{\innerthmname}{#1}\innerthm}
{\endinnerthm}
\theoremstyle{definition}
\def\namedlabel#1#2{\begingroup
	#2%
	\def\@currentlabel{#2}%
	\phantomsection\label{#1}\endgroup
}
\def\XXint#1#2#3{{\setbox0=\hbox{$#1{#2#3}{\int}$ }
		\vcenter{\hbox{$#2#3$ }}\kern-.6\wd0}}
\newcommand*\owedge{\mathpalette\@owedge\relax}
\newcommand*\@owedge[1]{%
	\mathbin{%
		\ooalign{%
			$#1\m@th\bigcirc$\cr
			\hidewidth$#1\m@th\wedge$\hidewidth\cr
		}%
	}%
}
\newcommand{\ud}{\mathrm{d}}
\newcommand{\loc}{\mathrm{loc}}
\newcommand{\Ss}{\mathbb{S}}
\DeclareMathOperator{\trace}{tr}
\title[Nonuniqueness results for constant sixth order $Q$-curvature metrics]{Nonuniqueness results for constant sixth order $Q$-curvature metrics on spheres with higher dimensional singularities}  
\thanks{This work was partially supported by Funda\c c\~ao de Amparo \`a Pesquisa do Estado de S\~ao Paulo (FAPESP), Conselho Nacional de Desenvolvimento Cient\'ifico e Tecnol\'ogico (CNPq), and Natural Sciences and Engineering Research Council of Canada (NSERC). 
	J.H.A. was supported by FAPESP \#2020/07566-3 and \#2021/15139-0. 
	P.P. was supported by FAPESP \#2016/23746-6 and CNPq \#313773/2021-1. 
	J.W. was supported by Natural Sciences and Engineering Research Council of Canada (NSERC)}
\author[J.H. Andrade]{Jo\~{a}o Henrique Andrade}
\author[P. Piccione]{Paolo Piccione}
\author[J. Wei]{Juncheng Wei}
\address[J.H. Andrade]{
	Department of Mathematics,
	University of British Columbia
	\newline\indent 
	V6T 1Z2, Vancouver, Canada
	\newline\indent
	and
	\newline\indent
	Institute of Mathematics and Statistics,
	University of S\~ao Paulo
	\newline\indent 
	05508-090, S\~ao Paulo, Brazil}
\email{\href{mailto:andradejh@math.ubc.ca}{andradejh@math.ubc.ca}}
\email{\href{mailto:andradejh@ime.usp.br}{andradejh@ime.usp.br}}
\address[P. Piccione]{Institute of Mathematics and Statistics,
	University of S\~ao Paulo
	\newline\indent 
	05508-090, S\~ao Paulo, Brazil}
\email{\href{mailto:piccione@ime.usp.br}{piccione@ime.usp.br}}
\address[J. Wei]{
	Department of Mathematics,
	University of British Columbia
	\newline\indent 
	V6T 1Z2, Vancouver-BC, Canada}
\email{\href{mailto:jcwei@math.ubc.ca}{jcwei@math.ubc.ca}}
\subjclass[2020]{35J60, 35B09, 35J30, 35B40, 53C18, 34C23, 58J55}
\keywords{Bifurcation theory, Nonlinear ODEs, Sixth order equations, Prescribed $Q$-curvature, Critical PDEs}
\begin{document}
	
	\begin{abstract}
		We prove nonuniqueness results for constant sixth order $Q$-metrics on complete locally conformally flat $n$-dimensional Riemannian manifolds with $n\geqslant 7$.
		More precisely, assuming a positive Green function exists for the sixth order GJMS operator, our objective is two-fold.
		First, we use a classical bifurcation technique to prove that there exists infinitely many constant $Q$-curvature metrics on $\mathbb{S}^1\times\mathbb{S}^{n-1}$.
		As a by-product, we find the sixth order Yamabe invariant on this product manifold can be arbitrarily close to that of the round dimensional sphere, generalizing a result of Schoen about the classical Yamabe invariant.
		Second, when the underlying manifold is noncompact, we apply a bifurcation technique on Riemannian covering to construct infinitely many complete metrics with constant sixth order $Q$-curvature conformal to $\mathbb{S}^{n_1} \times \mathbb{R}^{n_2}$ or  $\mathbb{S}^{n_1} \times \mathbb{H}^{n_2}$, where $n_1+n_2\geqslant 7$. 
		Consequently, we obtain infinitely many solutions to the singular constant GJMS equation on round spheres $\mathbb{S}^n\setminus \mathbb{S}^k$ blowing up along a minimal equatorial subsphere with $0 \leqslant k<\frac{n-6}{2}$; this dimension restriction is sharp in the topological sense.
\end{abstract}

\maketitle

\begin{center}
	\footnotesize
	\tableofcontents
\end{center}

\numberwithin{equation}{section} 
\numberwithin{theorem}{section}

\section{Introduction}\label{section1}
In recent years, active research has been made on analogs of the Yamabe problem and its singular counterpart. 
In each of these problems, one seeks a representative of a conformal class with prescribed constant curvature of some type.
In the classical case, the scalar curvature functional is naturally the first one to be studied.
In more modern examples, fully nonlinear $\sigma_\ell$-curvatures with $\ell\in\mathbb N$ and fractional higher order $Q_{2\sigma}$-curvatures with $\sigma\in\mathbb R_+$ are also considered. 

In this manuscript, we focus on the integer case $\sigma=m\in\mathbb N$, more precisely $m=3$.
Let $(M^n, g)$ with $n \geqslant 7$ be $n$-dimensional Riemannian manifold; $g$ is called the background metric or (ambient metric), which will be fixed throughout the paper.    
In consonance with the plethora of results for the Yamabe problem, the scope of nonuniqueness in the constant sixth order $Q$-curvature problem ought to be much more prosperous, both geometrically and topologically. 
Our main purpose here is to exhibit an extensive nonuniqueness phenomena on a broad class of Riemannian manifolds.

We set $A_g,W_g,B_g$ to be respectively the {\it Schouten tensor}, {\it Weyl tensor}, {\it Bach tensor} given by
\begin{align}\label{geometrictensors}
	A_g&:=\frac{1}{n-2}\left(\operatorname{Ric}_{g}-\frac{1}{2(n-1)} R_{g} g\right)\\
	W_g&:=\accentset{\circ}{\rm Rm}_g-A_g\owedge g\\
	B_g&:=\Delta_{g} A_g-\nabla_g^{2} \operatorname{tr}_gA_g+2 \accentset{\circ}{\rm Rm}_g\cdot A_g-(n-4) A_g\times A_g-\|A_g\|^{2} g-2(\operatorname{tr}_g A_g) A_g
\end{align}	
where these expressions are written in an abstract index-free manner (see Section~\ref{sec:notation}).

We define the concept of higher order curvatures as follows
\begin{align}\label{curvatures}
	\nonumber
	Q_2(g)&:=\frac{n-2}{4(n-1)}R(g)\\
	Q_4(g)&:=-\Delta_g\sigma_1(A_g)+4\sigma_2(A_g)-\frac{n-4}{2}\sigma_1(A_g)^2&\\\nonumber
	Q_6(g)&:=-3 ! 2^6 v_6(g)-\frac{n+2}{2} \Delta_g(\sigma_1(A_g)^2)+4 \Delta_g\|A_g\|^2-8 \delta_g(A_g \#\ud \sigma_1(A_g))+\Delta_g^2 \sigma_1(A_g)\\\nonumber
	&-\frac{n-6}{2} \sigma_1(A_g) \Delta_g \sigma_1(A_g) -4(n-6) \sigma_1(A_g)\|A_g\|^2+\frac{(n-6)(n+6)}{4} \sigma_1(A_g)^3.
\end{align}
Associated with these curvatures, we have the following conformally invariant operators
\begin{align}\label{operators}
	\nonumber
	&P_2(g):=-\Delta_g+\frac{n-2}{2}R(g)&\\
	&P_4(g):=\Delta_{g}^{2}-\delta_g T_2(g) \#\ud+\frac{n-4}{2}Q_4(g)&\\\nonumber
	&P_6(g):=-\Delta_g^3-\Delta_g \delta_g T_2(g) \#\ud-\delta_g T_2(g) \#\ud \Delta_g-\frac{n-2}{2} \Delta_g\left(\sigma_1(A_g) \Delta_g\right)-\delta_g T_4(g) \#\ud+\frac{n-6}{2} Q_6(g),&
\end{align}
where 
\begin{align*}
	T_2(g)&:=(n-2)\sigma_1(A_g) g-8A_g,\\
	T_4(g)&:=-\frac{3 n^2-12 n-4}{4} \sigma_1(A_g)^2 g+4(n-4)\|A_g\|^2 g+8(n-2) \sigma_1(A_g) A_g\\
	&+(n-6)\Delta_g \sigma_1(A_g) g+48A_g^2-\frac{16}{n-4} B_g,\\
	v_6(g)&:=-\frac{1}{8}\sigma_3(A_g)-\frac{1}{24(n-4)}\langle B_g, A_g\rangle_g,
\end{align*}
with $\sigma_\ell$ is the $\ell$-th elementary symmetric function for each $\ell\in\mathbb N$.  
Notice that $P_2(g)=L(g)$ is the so-called conformal Laplacian, and $P_6(g),P_6(g)$ are their conformally invariant powers.   

This class of higher order operators on manifolds was first introduced in the works of Graham, Jenne, Manson, and Sparling \cite{MR1190438}, which is construction is based on earlier constructions of conformally invariant fourth order operators by the works of Paneitz \cite{MR2393291} and Branson \cite{MR904819}, that is, for $m=2$.
Given 
$m \in \mathbb{N}$ and a compact $n$-dimensional Riemannian manifold 
$(M^n,g)$ with $n>2m$, the so-called GJMS operators are conformally 
covariant differential operators, denoted by $P_{2m}(g)$, satisfying that
the leading order term of $P_{2m}(g)$ is 
$(-\Delta_{g})^m$. 	
The construction of these operators are based on the seminal work of Fefferman and Graham \cite{MR2858236} on ambient metrics for any $m\in\mathbb N$.
From this point of view, one can then construct the associated $Q$-curvature 
of order $2m$ by $Q_{2m}(g) = P_{2m}(g) (1)$. 
When $m=1$,
one recovers the conformal Laplacian 
\begin{equation*}
	P_2(g) = -\Delta_g + \frac{n-2}{4(n-1)} R_g \quad {\rm with} \quad Q_2(g) =\frac{n-2}{4(n-1)} R_g,
\end{equation*}
where $\Delta_g$ is the Laplace--Beltrami operator of $g$ and $R_g$ is 
its scalar curvature. 
For the standard round sphere, Grahan and Zworski \cite{MR1965361} and Chang and 
Gonz\'alez \cite{MR2737789} 
extended these definitions to obtain (nonlocal) operators $P_{2\sigma}(g)$ of any 
order $\sigma\in \mathbb R_+$ with $n>2\sigma$ as well as the corresponding $\sigma$-fractional order $Q$-curvatures. 
Once again, the leading order part of $P_{2\sigma}(g)$ is $(-\Delta_{g})^{\sigma}$, 
understood as the principal value of a singular integral operator. 
Although we have explicit
formulas for $P_2(g)$, $P_4(g)$ and $P_6(g)$,
the expressions for $P_{2\sigma}(g)$ and $Q_{2\sigma}(g)$ for a general $\sigma\in\mathbb R_+$ are 
far more complicated (see, for instance \cite{MR3694655}).
For more details, we refer the interested reader to \cite{MR3077914,MR3652455,MR4285731,MR3073887}.

In general, the higher order $Q_{2m}$-curvatures defined in \eqref{curvatures} for $m=1,2,3$ transform nicely under a conformal change, namely for any conformal metric $\bar{g}\in[g]$, one has 
\begin{equation}\label{transformationlawcurvature}
	Q_{2m}({\bar{g}})=\frac{2}{n-2m} u^{-\frac{n+2m}{n-2m}} P_{2m}(g)u,
\end{equation}
where $P_{2m}(g):\mathcal{C}^{\infty}(M)\rightarrow\mathcal{C}^{\infty}(M)$ is the {\it GJMS operator} defined in \eqref{operators} and 
\begin{equation*}
	[g]:=\left\{\bar{g}\in{\rm Met}^\infty(M): \bar{g}=u^{{4}/{(n-2m)}} g \ {\rm for \ some} \ u \in \mathcal{C}_+^{\infty}(M)\right\}
\end{equation*}
is the conformal class of $g$,  where $u\in\mathcal{C}_+^{\infty}(M)$ if and only if $u\in\mathcal{C}^{\infty}(M)$ and $u>0$.
This operator is known to satisfy the transformation law below
\begin{equation}\label{transformationlawoperator}
	P_{2m}(g)(\phi)=u^{-\frac{n+2m}{n-2m}} P_{2m}(g)(u\phi).
\end{equation}
It is straightforward to see that one recovers \eqref{transformationlawcurvature} by substituting $\phi\equiv 1$ into \eqref{transformationlawoperator}. 
The transformation rules \eqref{transformationlawcurvature} and \eqref{transformationlawoperator} motivate us to define the so-called {\it total curvature functional} $\mathcal{Q}_{2m}:[g] \rightarrow \mathbb{R}$ and {\it Yamabe-type functional} $\mathcal{Y}^+_{2m}:[g] \rightarrow \mathbb{R}$ for $m=1,2,3$ as
\begin{equation*}
	\mathcal{Q}_{2m}(\bar{g}):=\frac{\int_M Q_{2m}({\bar{g}}) \ud \mathrm{v}_{\bar{g}}}{{\rm vol}_{\bar{g}}(M)^{\frac{n-2m}{n}}}= \frac{\frac{2}{n-2m}\int_M u P_{2m}(g)u \ud \mathrm{v}_g}{\left(\int_M u^{\frac{2 n}{n-2m}} \ud \mathrm{v}_g\right)^{\frac{n-2m}{n}}}
\end{equation*}
and
\begin{align*}
	\mathcal{Y}^+_{2m}(M, g) :=\inf_{\bar{g} \in[g]} \mathcal{Q}_{2m}(\bar{g})=\inf_{\bar{g} \in[g]} \frac{\int_M Q_{2m}({\bar{g}}) \ud \mathrm{v}_{\bar{g}}}{{\rm vol}_{\bar{g}}(M)^{\frac{n-2m}{n}}}=\inf_{u \in \mathcal{C}_+^{\infty}(M)} \frac{\frac{2}{n-2m} \int_M u P_{2m}(g)u \ud \mathrm{v}_g}{\left(\int_M u^{\frac{2 n}{n-2m}} \ud \mathrm{v}_g\right)^{\frac{n-2m}{n}}},
\end{align*}
where $\ud \mathrm{v}_g$ denotes the standard volume measure.
This quantity is a higher order analog of the famous {\it Yamabe invariant}.  
Following this notation, for $m=1,2,3$ we set the differential invariant
\begin{equation*}
	\mathbb{Y}_{2m}^{+}(M):=\sup_{[g] \in \mathfrak{c}} \mathcal{Y}_{2m}^{+}(M, g)=\sup _{[g] \in \mathfrak{c}} \inf_{\widetilde{g} \in[g]}\left\{\frac{\int_M Q_{2m}({\bar{g}}) \ud \mathrm{v}_{\bar{g}}}{{\rm vol}_{\bar{g}}(M)^{\frac{n-2m}{n}}}\right\},
\end{equation*}
where $\mathfrak{c}$ denotes the space of conformal classes on the underlying manifold. 

Notice that we require all test functions in the infimum for $\mathcal{Y}_{2m}^{+}$must all be positive. 
Naturally, one may also define
\begin{equation*}
	\mathcal{Y}_{2m}(M, g):=\inf_{\mathcal{C}^{\infty}(M)\setminus\{0\}}\frac{\frac{2}{n-2m} \int_M u P_{2m}(g)u \ud\mathrm{v}_g}{\left(\int_M|u|^{\frac{2 n}{n-2m}} \ud\mathrm{v}_g\right)^{\frac{n-2m}{n}}},
\end{equation*}
and clearly $\mathcal{Y}_{2m}(M, g) \leqslant \mathcal{Y}_{2m}^{+}(M, g)$.

We assume that $(M^n,g)$ is closed (compact and without boundary). 
We seek conformal metrics of the form $\bar{g} = \mathrm{u}^{{4}/{(n-6)}} g$. 
We prescribe the resulting metric to have constant $Q_6$-curvature, which we normalize to 
be 
\begin{equation}\label{const_q_curv}
	Q_n = Q_6(g_0) = \frac{n(n^4-20n^2+64)}{2^5},
\end{equation} 
where $g_0=g_{\mathbb S^n}$ denotes the standard round metric on the sphere.
In addition, using the transformation law \eqref{transformationlawoperator}, the condition that $\bar{g} = \mathrm{u}^{{4}/{(n-6)}} g$ satisfies $Q_6(g) = Q_n$ on 
$M$ is equivalent to the finding $\mathrm{u}\in \mathcal{C}^\infty(M)$ solution to the the sixth order GJMS equation 
\begin{equation}\tag{$\mathcal{Q}_{6,g}$}\label{ourequationmanifold}
	P_6({g})\mathrm{u}=c_n \mathrm{u}^{\frac{n+6}{n-6}} \quad \mbox{on} \quad M,
\end{equation}
where $c_n=\frac{n-6}{2}{Q}_n$ is a normalizing constant.    
The operator on the left-hand side is the sixth order GJMS operator on the sphere defined in \eqref{operators}. 
We remark that the nonlinearity 
on the right-hand side of \eqref{ourequationmanifold} has critical growth with respect 
to the compact embedding of the Beppo--Levi space $\mathcal{D}^{3,2}(\mathbb R^n)\rightarrow L^{2^{\#}}(\mathbb R^n)$, where $2^\# = \frac{2n}{n-6}$. It is well known that this 
embedding is not compact, reflecting the conformal invariance of the PDE \eqref{ourequationmanifold}. 

As usual, we need to analyze the conformal invariant to find solutions to the last equation.
Namely, we define the sixth order Yamabe invariant $\mathcal{Y}_{6}:[g]\rightarrow\mathbb R$ as
\begin{equation}\label{yamabeinvariant}
	\mathcal{Y}_{6}(M,g):=\inf_{u \in L^{\frac{2n}{n-6}}(M) \setminus\{0\}} \frac{\frac{2}{n-6} \int_M u P_6(g)u \ud\mathrm{v}_g}{\left(\int_M|u|^{\frac{2 n}{n-6}} \ud\mathrm{v}_g\right)^{\frac{n-6}{n}}},
\end{equation}
where $\bar{g}=u^{{4}/{(n-6)}} g$ is a conformal metric with $u\in L^{\frac{2n}{n-6}}(M)$ is a conformal factor.	
The main results in this paper are based on the existence of a positive Green function for the sixth order GJMS operator, which can be stated as 
\begin{enumerate}
	\item[\namedlabel{itm:P}{({\rm $\mathcal{G}_+$})}] For each $p\in M$, there exists a positive Green function, denoted by $G_6(g)_p:M\times M\rightarrow \mathbb R$, for the sixth order GJMS operator $P_6(g):\mathcal{C}^\infty(M)\rightarrow \mathcal{C}^\infty(M)$ with a pole at $p$. In other terms, it satisfies
	\begin{equation*}
		P_6(g)G_6(g)_p=\delta_p \quad {\rm in} \quad M,
	\end{equation*}
	where $\delta_p$ denotes the Dirac measure centered at $p$.
\end{enumerate}   
In the light of the last definition, we can use the positive Green function for the sixth order GJMS operator to define the dual conformal invariant.
This quantity is a counterpart to the new invariant introduced by Hang and Yang \cite{MR3518237} for the fourth order setting.
As a matter of fact, assuming that $(M^n,g)$ satisfies \ref{itm:P}, we define the dual sixth order Yamabe invariant $\Theta_6:[g]\rightarrow\mathbb R$ as
\begin{equation}\label{dualyamabeinvariant}
	\Theta_6(M, g):=\sup_{f \in L^{\frac{2n}{n+6}}(M) \setminus\{0\}} \frac{\int_M f G_6(g) f \ud{\rm v}_{g}}{{\left(\int_M f^{\frac{2 n}{n+6}} \ud \mathrm{v}_g\right)^{\frac{n-6}{n}}}}=\sup_{\bar{g} \in[g]} \frac{\int_M Q_6({\bar{g}}) \ud\mathrm{v}_{\bar{g}}}
	{\|Q_6({\bar{g}})\|^2_{L^{\frac{2 n}{n+6}}(M)}},
\end{equation}
where $\bar{g}=f^{{4}/{(n+6)}} g$ is a conformal metric with $f\in L^{\frac{2n}{n+6}}(M)$ is a dual conformal factor.      
In some sense, $\Theta_6(M, g)$ is comparable with the reciprocal $\mathcal{Y}_6^{+}(M, g)^{-1}$. 
The advantages of considering this quantity are that if a maximizer for \eqref{dualyamabeinvariant} exists, it is smooth and does not change its sign, and the associated conformal metric has constant sixth order $Q$-curvature. 

One needs {\it a priori} bound on the sixth order Yamabe invariant to apply the topological bifurcation strategy for coverings. 
This type of inequality is related to the asymptotic of the Green function and the concept of mass for a GJMS operator \cite{MR888880,arXiv:1012.4414}.
Below we state the positive mass theorem of Qing and Raske \cite{MR2219215} (see also \cite{arXiv:1012.4414}).
For this, we work in conformal normal coordinates \cite{MR991959,MR1225437} around $p$, {\it i.e.} $r:=\ud_g(x,p)$.	

\begin{lemmaletter}\label{prop:greenfunction1}
	Let $(\widetilde{M}^n,\tilde{g})$ be a closed locally conformally flat manifold with $n\geqslant 7$ satisfying $R(\tilde{g})\geqslant0$. 
	Suppose that \ref{itm:P} holds.   
	Then, for any $p\in M$, one has
	\begin{equation*}
		G_6(g)_p(r)=C_n r^{6-n}+m_6(g)_p+\mathcal{O}(r) \quad {\rm as} \quad r\rightarrow0,
	\end{equation*}
	where $m_6(g)\in\mathbb R$ is a constant.
	Moreover, if the Poincar\'e exponent of the Kleinian group $\Gamma\subset {\rm iso}(M)$ satisfies
	\begin{equation}\label{qing-raske}\tag{$\mathcal{QR}$}
		0<\delta(\Gamma)<\frac{n-6}{2},
	\end{equation}
	where $\delta(\Gamma):=\inf \{\delta>0: \sum_{\gamma \in \Gamma}\left|\gamma^{\prime}(x)\right|^\delta<\infty \ {\rm for \ all} \ x\in\mathbb{S}^n\}$,
	then $m_6(g)\geqslant0$,  with equality $m_6(g)=0$ implying $(\widetilde{M}^n,\tilde{g})\simeq(\mathbb{R}^n,\delta)$.
	
\end{lemmaletter}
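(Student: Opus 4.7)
The plan is to adapt the sixth-order positive mass theorem of Qing--Raske~\cite{MR2219215} (cf.\ also \cite{arXiv:1012.4414}) by combining an asymptotic expansion of $G_6(g)_p$ in conformal normal coordinates with a Poincar\'e series representation induced by the Kleinian structure of $\widetilde M$. This is the natural sixth-order analogue of Schoen's strategy, but both the higher-order nature of $P_6$ and the positivity of its round-sphere Green kernel must be treated more carefully than in the scalar case.

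For the expansion, I would work in conformal normal coordinates $x=(x^1,\ldots,x^n)$ centered at $p$ (see \cite{MR991959,MR1225437}), where the volume form equals $(1+\mathcal O(r^N))\,\ud x$ for every $N\in\mathbb N$ and a number of curvature contractions vanish at $p$. Writing $P_6(g)=(-\Delta)^3+\mathcal R$ with $\mathcal R$ of order at most five and with coefficients vanishing to sufficiently high order at $p$, a standard parametrix construction based on the Euclidean fundamental solution $C_n|x|^{6-n}$ of $(-\Delta)^3$ forces $G_6(g)_p$ to admit an expansion of the form $C_n r^{6-n}+m_6(g)_p+\mathcal O(r)$ with the prescribed normalization.

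For the sign of $m_6(g)_p$, I would invoke the Schoen--Yau type injectivity theorem (extended to higher dimensions for locally conformally flat manifolds with $R(\tilde g)\geq 0$ by Qing--Raske), which identifies the developing map $\Phi:(\widetilde M,\tilde g)\hookrightarrow(\mathbb{S}^n,g_{\mathbb{S}^n})$ as an injection onto the domain of discontinuity $\Omega=\mathbb{S}^n\setminus L(\Gamma)$, with $\widetilde M\simeq\Omega/\Gamma$. Because on the round sphere $P_6(g_{\mathbb{S}^n})$ factors through the Branson intertwiners $\prod_{k=1}^{3}(-\Delta_{g_{\mathbb{S}^n}}+c_k)$, its Green kernel $K_6$ is explicit, conformally covariant, and pointwise positive, comparable with $C_n d_{\mathbb{S}^n}(\cdot,\cdot)^{6-n}$. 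The candidate Green function on $\widetilde M$ is then the $\Gamma$-equivariant Poincar\'e series
\begin{equation*}
G_6(g)_p(q)=\sum_{\gamma\in\Gamma}K_6\bigl(\Phi(p),\gamma\Phi(q)\bigr),
\end{equation*}
whose absolute convergence is equivalent to $\sum_{\gamma\in\Gamma}|\gamma'(\Phi(p))|^{(n-6)/2}<\infty$, which is precisely the threshold condition \eqref{qing-raske}. Splitting off the $\gamma=e$ term produces the singular part $C_n r^{6-n}$, while the remaining terms sum to a non-negative constant that must coincide with $m_6(g)_p$.

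Finally, since every summand with $\gamma\neq e$ is strictly positive, $m_6(g)_p=0$ forces $\Gamma=\{e\}$ and $\Omega=\mathbb{S}^n\setminus\{\mathrm{pt}\}$, so stereographic projection gives $(\widetilde M,\tilde g)\simeq(\mathbb R^n,\delta)$. I expect the main obstacle to be the pointwise positivity of $K_6$: a general sixth-order elliptic operator does not satisfy a maximum principle, so positivity has to be extracted from the Branson factorization factor-by-factor, combined with the positivity of each second-order Green kernel; in addition, the sharpness of the convergence threshold in \eqref{qing-raske} must be verified through Patterson--Sullivan theory for conformal densities, which controls the escape rate of $\Gamma$-orbits.
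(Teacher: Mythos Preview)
The paper does not prove this lemma at all: it is stated in the introduction as a quotation of the positive mass theorem of Qing--Raske \cite{MR2219215} (with the remark ``see also \cite{arXiv:1012.4414}''), and no argument is supplied beyond that citation. Your proposal is therefore not competing with any in-paper proof; rather, you have correctly reconstructed the skeleton of the Qing--Raske argument that the paper is invoking --- parametrix expansion in conformal normal coordinates for the asymptotics, injectivity of the developing map under $R\geqslant 0$, and the $\Gamma$-averaged Poincar\'e series built from the spherical Green kernel to read off the mass as a non-negative sum.

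One technical point worth tightening: your displayed Poincar\'e series omits the conformal weights. Because $K_6$ is only conformally covariant, the $\Gamma$-invariant object is
\[
G_6(g)_p(q)=\sum_{\gamma\in\Gamma}\bigl|\gamma'(\Phi(q))\bigr|^{\frac{n-6}{2}}K_6\bigl(\Phi(p),\gamma\Phi(q)\bigr),
\]
and it is precisely these factors that make the convergence question equivalent to $\delta(\Gamma)<\tfrac{n-6}{2}$. Without them the series is neither $\Gamma$-equivariant nor comparable to the Poincar\'e series whose critical exponent is $\delta(\Gamma)$. You already anticipate the other genuine obstacle --- pointwise positivity of $K_6$ on $\mathbb{S}^n$ despite the absence of a sixth-order maximum principle --- and your suggested route through the Branson factorization into second-order factors with positive Green kernels is exactly how this is handled in \cite{MR2219215}.
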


Based on the positivity and topological properties above, we have the Aubin-type result below
\begin{propositionletter}\label{lm:aubin}
	Let $(\widetilde{M}^n,\tilde{g})$ be a closed locally conformally flat manifold with $n\geqslant 7$ satisfying $R(\tilde{g})\geqslant0$.
	Suppose that \ref{itm:P} and \eqref{qing-raske} hold.   
	Then, both the statements below hold and are equivalent
	\begin{enumerate}
		\item[\namedlabel{itm:A}{({\rm $\mathcal{A}$})}] The infimum in \eqref{yamabeinvariant} is attained at some smooth function $u \in \mathcal{C}^{\infty}(M)$, and the conformal metric $\bar{g}=u^{{4}/{(n-6)}} g$ has constant sixth order $Q$-curvature. Moreover, one has
		\begin{equation*}
			\mathcal{Y}_6(M, g) \leqslant \mathcal{Y}_6(\mathbb{S}^n, g_0),
		\end{equation*}
		with equality if and only if $(M^n, g)$ is conformally equivalent to the round sphere $(\mathbb{S}^n, g_0)$;
		\item[\namedlabel{itm:A'}{(\rm $\mathcal{A}^\prime$)}]
		The supremum in \eqref{dualyamabeinvariant} is attained at some smooth function $f \in \mathcal{C}^{\infty}(M)$, and the conformal metric $\bar{g}=f^{{4}/{(n+6)}} g$ has constant sixth order $Q$-curvature. Moreover, one has
		\begin{equation*}
			\Theta_6(M, g) \geqslant \Theta_6(\mathbb{S}^n, g_0),
		\end{equation*}
		with equality if and only if $(M^n, g)$ is conformally equivalent to the round sphere $(\mathbb{S}^n, g_0)$.
	\end{enumerate}     
\end{propositionletter}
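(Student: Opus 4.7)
The strategy is the classical three-step Aubin argument adapted to the sixth-order setting, with the positivity hypothesis \ref{itm:P} and the mass expansion of Lemma~\ref{prop:greenfunction1} doing the heavy geometric lifting. The cornerstone is the strict inequality $\mathcal{Y}_6(M,g) < \mathcal{Y}_6(\mathbb{S}^n,g_0)$ whenever $(M,g)$ is not conformally round. To prove it, I would build a test function $u_\varepsilon$ by gluing a rescaled sharp extremal bubble $U_\varepsilon(x)=\bigl(\tfrac{\varepsilon}{\varepsilon^2+|x|^2}\bigr)^{(n-6)/2}$ on $\mathbb{R}^n$, read through conformal normal coordinates centered at a fixed $p\in M$, to a constant multiple of $G_6(g)_p$ outside a small ball. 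Plugging $u_\varepsilon$ into the Rayleigh quotient \eqref{yamabeinvariant} and expanding in $\varepsilon$, the leading terms reproduce $\mathcal{Y}_6(\mathbb{S}^n,g_0)$ while the next-order correction is a positive multiple of the mass $m_6(g)_p$; Lemma~\ref{prop:greenfunction1} asserts this mass is strictly positive unless $(\widetilde M,\tilde g)\simeq(\mathbb{R}^n,\delta)$, which yields the strict inequality.

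\textbf{Existence, regularity, and positivity.} With the strict inequality secured, part \ref{itm:A} follows by a concentration-compactness argument: a minimizing sequence $\{u_k\}\subset W^{3,2}(M)$ for $\mathcal{Y}_6(M,g)$ is bounded and converges weakly, and the Struwe-type bubbling alternative (in which each bubble contributes exactly $\mathcal{Y}_6(\mathbb{S}^n,g_0)^{n/6}$) is ruled out by $\mathcal{Y}_6(M,g)<\mathcal{Y}_6(\mathbb{S}^n,g_0)$; hence the weak limit $u$ is a genuine minimizer. Elliptic bootstrap applied to the Euler--Lagrange equation \eqref{ourequationmanifold} (first $L^p$-theory, then Schauder estimates for the GJMS operator) gives $u\in\mathcal{C}^\infty(M)$. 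Since no standard maximum principle is available for the sixth-order operator, positivity is obtained by a duality argument: one may assume $u\geqslant 0$ (replacing $u_k$ by $|u_k|$ preserves the critical norm), and then the representation $u=\int_M G_6(g)(\cdot,y)\,P_6(g)u(y)\,\ud\mathrm{v}_g$ combined with the positivity of $G_6(g)$ from \ref{itm:P} and $P_6(g)u\geqslant 0$ forces $u>0$ throughout $M$. The conformal metric $\bar g=u^{4/(n-6)}g$ then has constant sixth-order $Q$-curvature by \eqref{transformationlawcurvature}.

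\textbf{Rigidity and duality.} For the equality case $\mathcal{Y}_6(M,g)=\mathcal{Y}_6(\mathbb{S}^n,g_0)$, tracing the test-function expansion in reverse shows that the only way the correction term can fail to produce a strict improvement is $m_6(g)_p=0$ for every $p\in M$; Lemma~\ref{prop:greenfunction1} then identifies the universal cover with flat $\mathbb{R}^n$, and hence $(M,g)$ is conformally equivalent to $(\mathbb{S}^n,g_0)$. The equivalence \ref{itm:A}$\Leftrightarrow$\ref{itm:A'} is a Green-function duality in the spirit of Hang--Yang \cite{MR3518237}: under the correspondence $f=P_6(g)u$, equivalently $u(x)=\int_M G_6(g)(x,y)f(y)\,\ud\mathrm{v}_g$, symmetry of $G_6(g)$ gives $\int_M u\,P_6(g)u\,\ud\mathrm{v}_g=\int_M f\,G_6(g)f\,\ud\mathrm{v}_g$, while the critical exponents $2n/(n-6)$ and $2n/(n+6)$ are H\"older conjugate, so the two Rayleigh quotients are linked by a Legendre-type duality and their extremizers are in bijection. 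The sphere comparison inverts because the round extremizer $u_0$ of $\mathcal{Y}_6(\mathbb{S}^n,g_0)$ dualizes to $f_0=c_n\,u_0^{(n+6)/(n-6)}$, which attains $\Theta_6(\mathbb{S}^n,g_0)$. The main technical obstacle will be the expansion of $\mathcal{Q}_6(u_\varepsilon)$, since the sixth-order GJMS operator has an intricate local form and disentangling the mass term from the many lower-order curvature contributions requires care; fortunately, the locally conformally flat hypothesis guarantees that the Weyl and Bach tensors vanish identically, which streamlines the conformal normal coordinate expansion.
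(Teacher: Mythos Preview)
The paper does not supply a self-contained proof of Proposition~\ref{lm:aubin}; it is stated as background and attributed to \cite{MR2219215,arXiv:1012.4414,MR3652455}, with the Hang--Yang dual-invariant framework \cite{MR3518237} implicitly in the background. Your overall architecture---test-function construction via the Green-function mass expansion of Lemma~\ref{prop:greenfunction1}, strict Aubin inequality, concentration--compactness, and rigidity from the positive mass theorem---is precisely the route those references take, so at that level your sketch matches what the paper is invoking.

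There is, however, one genuine gap. In your positivity step you write that ``one may assume $u\geqslant 0$ (replacing $u_k$ by $|u_k|$ preserves the critical norm).'' This is exactly where the sixth-order problem diverges from the second-order one: for $m\geqslant 2$ the substitution $u\mapsto |u|$ does \emph{not} in general decrease the energy $\int_M u\,P_{2m}(g)u\,\ud\mathrm{v}_g$ (already $\int_M|\Delta|u||^2\,\ud\mathrm{v}_g$ is not controlled by $\int_M|\Delta u|^2\,\ud\mathrm{v}_g$), so you cannot manufacture a nonnegative minimizing sequence this way. Your subsequent implication ``$P_6(g)u\geqslant 0$ because $u\geqslant 0$'' is therefore unjustified, and the Green-function representation step collapses. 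This is not cosmetic---the failure of this trick is precisely the obstruction the paper flags in the introduction and the reason the dual invariant $\Theta_6$ was introduced.

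The fix is to reverse the logical order of \ref{itm:A} and \ref{itm:A'}. Work directly on $\Theta_6$: because $G_6(g)>0$ by \ref{itm:P}, the pointwise bound $f(x)f(y)\leqslant|f(x)||f(y)|$ gives $\int_M f\,G_6(g)f\,\ud\mathrm{v}_g\leqslant\int_M |f|\,G_6(g)|f|\,\ud\mathrm{v}_g$, so replacing $f$ by $|f|$ is legitimate \emph{for the dual functional}---this asymmetry is what makes the dual problem tractable. Carry out the test-function expansion for $\Theta_6$ (with $f_\varepsilon=P_6(g)u_\varepsilon$), obtain $\Theta_6(M,g)>\Theta_6(\mathbb{S}^n,g_0)$ from the same mass term, and extract a nonnegative maximizer $f$ by compactness; then $u=G_6(g)f>0$ automatically. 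Only after \ref{itm:A'} is in hand does your Legendre duality deliver \ref{itm:A} with a genuinely positive minimizer.
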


We recall that on Einstein manifolds, the sixth order GJMS operator expresses as \eqref{factorization}.
In this case, one can directly apply Bishop--Gromov volume comparison to guarantee that the geometric inequalities above are satisfied.
This follows if one assumes the validity of the positive mass theorem (which can be recovered by assuming \eqref{qing-raske}) in the low-dimensional cases $n=7,8,9$ (c.f. \cite{MR2219215,arXiv:1012.4414}) or from the local structure around a non-vanishing point of the Weyl tensor in remaining cases $n\geqslant 10$ (c.f. \cite{MR3652455}).
Recently, there has been some progress in proving the validity of \ref{itm:P}.
Indeed, Jeffrey Case informed us of a sufficient condition for \ref{itm:A} to be satisfied for a special Einstein product introduced by Gover and Leitner \cite{MR2574315}.  
Namely, when the underlying manifold satisfies the property below
\begin{enumerate}
	\item[\namedlabel{itm:GL}{({\rm $\mathcal{GL}$})}] $(M^n,g)=(M^{n_1}\times M^{n_2},g_1\oplus g_2)$ with $(M^{n_1},g_1)$ and $(M^{n_2},g_2)$ being both Einstein and 
	\begin{equation*}
		n_1(n_1-1)R(g_1)+n_2(n_2-1)R(g_2)=0.
	\end{equation*}
\end{enumerate} 
with $R(g_2)>0$ and $n_2\gg1$ sufficiently large, then \ref{itm:P} is true.  
In addition, he also expects that \ref{itm:A'} holds in this case.

Our objective is to prove nonuniqueness results for constant sixth order $Q$-curvature metrics.
Although these results look, at first glance, seemingly different, they are both based on bifurcation techniques and global geometric inequalities above, namely \ref{itm:A} and \ref{itm:A'}.

The first main result of this paper is based on \cite[Theorem 1]{arXiv:2002.05939} and uses a bifurcation technique to obtain the existence of infinitely many on the product of two spheres.

\begin{theorem}\label{thm1}
	Let $(\mathbb S^1\times\mathbb S^{n-1},g_{\mathbb S^1}\oplus g_{\mathbb S^{n-1}})$  with $n\geqslant 7$.
	Then, there exists infinitely many nonhomothetic conformal metrics on ${\rm Met}^\infty(\mathbb S^1\times\mathbb S^{n-1})$ with constant sixth order $Q$-curvature.
	Moreover, one has 
	\begin{equation*}
		\lim_{\ell\rightarrow+\infty}\mathcal{Y}_6^{+}(\mathbb{S}^1 \times\mathbb{S}^{n-1},g_\ell)=\mathcal{Y}_6^{+}(\mathbb{S}^n,g_0).
	\end{equation*}
	Consequently, it follows the equality
	\begin{equation*}
		\mathbb{Y}_6^{+}(\mathbb{S}^1 \times\mathbb{S}^{n-1})=\mathbb{Y}_6^{+}(\mathbb{S}^n)=\frac{n(n^4-20n^2+64)}{2^5}\omega_n^{6/n}
	\end{equation*}
	cannot be realized by a smooth metric on $\mathbb{S}^{n-1} \times \mathbb{S}^1$.
\end{theorem}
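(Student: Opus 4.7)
The plan is to run a bifurcation argument along the one-parameter family of product metrics
\[
g_\ell := g_{\mathbb{S}^1(\ell)} \oplus g_{\mathbb{S}^{n-1}}, \qquad \ell\in(0,\infty),
\]
where $\mathbb{S}^1(\ell)$ is the circle of length $2\pi\ell$. Each factor is Einstein, so $g_\ell$ is locally conformally flat with nonnegative scalar curvature, and the factorization of $P_6$ on products of Einstein manifolds recalled just before the statement shows that $Q_6(g_\ell)$ is a constant depending only on $\ell$. After an overall rescaling, the constant $u\equiv 1$ is therefore a trivial solution of $P_6(g_\ell)u = c_n u^{(n+6)/(n-6)}$ for every $\ell$, and the goal is to detect nonconstant positive solutions bifurcating off this branch: since two metrics in a common conformal class are homothetic only if their conformal factor is constant, each such solution yields a new, nonhomothetic constant $Q_6$-curvature metric on $\mathbb{S}^1\times\mathbb{S}^{n-1}$.

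First I would linearize at $u\equiv 1$, producing the self-adjoint sixth order operator $L_\ell := P_6(g_\ell) - \tfrac{n+6}{n-6}\,c_n$. Separation of variables $L^2(\mathbb{S}^1\times\mathbb{S}^{n-1}) = \bigoplus_{k,j} V_k\otimes H_j$ into Fourier modes on $\mathbb{S}^1$ and spherical harmonics of degree $j$ on $\mathbb{S}^{n-1}$ reduces $L_\ell$ to an explicit polynomial in $k^2/\ell^2$ and $j(j+n-2)$; varying $\ell$ only rescales the first eigenvalue, so for every sufficiently large $j$ the corresponding branch $\mu_j(\ell)$ vanishes at a unique $\ell_j > 0$ with $\ell_j\to\infty$ and $\mu_j'(\ell_j)\neq 0$, forcing the Morse index of $L_\ell$ to jump by an odd integer at each $\ell_j$. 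A standard Krasnoselskii--Rabinowitz type bifurcation theorem, applied in $\mathcal{C}^{6,\alpha}(\mathbb{S}^1\times\mathbb{S}^{n-1})$ following the fourth order prototype cited in the statement, then yields at each $\ell_j$ a connected continuum of smooth nonconstant solutions, which remain positive near the trivial branch by the implicit function theorem together with hypothesis \ref{itm:P}.

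For the Yamabe invariant limit, the upper bound $\mathcal{Y}_6^+(\mathbb{S}^1\times\mathbb{S}^{n-1},g_\ell)\leqslant\mathcal{Y}_6^+(\mathbb{S}^n,g_0)$ is immediate from Proposition~\ref{lm:aubin}. For the matching lower bound as $\ell\to\infty$ I would test the Yamabe quotient with an Aubin-type concentration profile $u_\varepsilon(x) = (\varepsilon^2 + |x|^2)^{-(n-6)/2}$ transplanted to a small geodesic ball in $\mathbb{S}^1(\ell)\times\mathbb{S}^{n-1}$; for $\ell$ large, such a ball embeds isometrically into the universal cover $\mathbb{R}\times\mathbb{S}^{n-1}$, which is conformally $\mathbb{R}^n\setminus\{0\}$, so the Rayleigh quotient converges to the sharp sixth order Euclidean Sobolev constant, that is, to $\mathcal{Y}_6^+(\mathbb{S}^n,g_0)$. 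Combining the two bounds gives both the limit and the identity $\mathbb{Y}_6^+(\mathbb{S}^1\times\mathbb{S}^{n-1}) = \mathbb{Y}_6^+(\mathbb{S}^n)$. The non-realization claim then follows by contradiction: any smooth conformal class attaining this supremum would saturate the equality case in \ref{itm:A}, forcing conformal equivalence with $(\mathbb{S}^n,g_0)$, which is incompatible with the topology of $\mathbb{S}^1\times\mathbb{S}^{n-1}$.

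The main obstacle I expect lies in the spectral analysis of $L_\ell$. Because $P_6(g_\ell)$ is not a polynomial in $\Delta_{g_\ell}$ but incorporates the nondiagonal terms $\delta_g T_2\#\ud\Delta_g$ and $\delta_g T_4\#\ud$ built from the Schouten, Bach, and $v_6$ pieces of the product metric, verifying its action on each $V_k\otimes H_j$, identifying the bifurcation values $\ell_j$, and checking transversality of the eigenvalue crossings is the technical heart of the proof. A secondary difficulty, familiar from the fourth order analogue, is controlling the positivity of the bifurcating branches beyond the local Crandall--Rabinowitz neighborhood, which should be handled by combining the maximum-principle content of \ref{itm:P} with continuity of the Rabinowitz continua.
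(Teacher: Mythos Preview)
Your bifurcation strategy for producing infinitely many solutions is plausible and close in spirit to the paper's, but the execution differs: the paper does not invoke an abstract Krasnoselskii--Rabinowitz theorem at all. Instead it passes to the universal cover $\mathbb{R}\times\mathbb{S}^{n-1}$ via the Emden--Fowler change of variables, reduces the PDE to the sixth order ODE \eqref{ourODE}, and uses the full classification of its bounded solutions (the Delaunay family $v_\varepsilon$, Lemma~\ref{lm:classification} and Proposition~\ref{prop:andrade-wei}). Because the period function $T_\varepsilon$ is monotone with $T_\varepsilon\to\infty$, on $\mathbb{S}^1_T\times\mathbb{S}^{n-1}$ one can simply count: the constant $Q$-curvature metrics are $v_{\rm cyl}$ together with the Delaunay solutions $v_{\varepsilon_\ell}$ with $T_{\varepsilon_\ell}=T/\ell$, giving $\lfloor T/T_{\rm cyl}\rfloor$ solutions. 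This is more explicit than your abstract scheme and, crucially, it identifies which of these is the actual Yamabe minimizer (the one with $T_{\varepsilon_1}=T$, by a Sturm--Liouville nodal-domain count).

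The genuine gap in your plan is the Yamabe limit. You write that the upper bound $\mathcal{Y}_6^+(\mathbb{S}^1\times\mathbb{S}^{n-1},g_\ell)\leqslant\mathcal{Y}_6^+(\mathbb{S}^n,g_0)$ comes from Proposition~\ref{lm:aubin}, and then propose to get the ``matching lower bound'' by testing the quotient with an Aubin bubble. But $\mathcal{Y}_6^+$ is an infimum, so plugging in any test function only gives another \emph{upper} bound; concentration profiles reproduce the Aubin inequality, not its reverse. What is needed is $\liminf_{\ell}\mathcal{Y}_6^+(g_\ell)\geqslant\mathcal{Y}_6^+(\mathbb{S}^n,g_0)$, i.e.\ a uniform lower bound on the quotient for \emph{all} competitors. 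The paper obtains this by exploiting the ODE classification: since the minimizer on $\mathbb{S}^1_{T_\varepsilon}\times\mathbb{S}^{n-1}$ is exactly $v_\varepsilon$, one computes $\mathcal{Y}_6^+(g_\ell)=\mathcal{Q}_6(g_{v_\varepsilon})$ in closed form and lets $\varepsilon\nearrow 1$, using $v_\varepsilon\to v_{\rm sph}$ in $\mathcal{C}^\infty_{\rm loc}$ to evaluate the limiting integral as $\mathcal{Y}_6^+(\mathbb{S}^n,g_0)$. Without identifying the minimizer your argument cannot close; the abstract bifurcation branches you produce are not known to be minimizing, so they do not help here. (A minor side remark: the factorization you cite holds for Einstein metrics, and $g_\ell$ is not Einstein; constancy of $Q_6(g_\ell)$ follows instead from homogeneity of the product.)
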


Notice that $\mathbb{Y}_6^{+}(\mathbb{S}^n)$ coincides with the best Sobolev constant of the compact embedding of the Beppo--Levi space $\mathcal{D}^{3,2}(\mathbb R^n)\rightarrow L^{2^{\#}}(\mathbb R^n)$.
As a by-product of Lemma~\ref{lm:aubin} and Theorem~\ref{thm1}, we prove that the sixth order Yamabe invariant on $\mathbb{S}^1\times\mathbb{S}^{n-1}$ coincides with the one of the round sphere.
To the best of our knowledge, this provides the first class of examples for which the sixth order Yamabe invariant is directly computed; such a fact is remarkable and of independent interest.

\begin{corollary}\label{cor2}
	Let $(\mathbb S^1\times\mathbb S^{n-1},g_{\mathbb S^1}\oplus g_{\mathbb S^{n-1}})$ with $n\geqslant 7$.
	Suppose that \ref{itm:P} holds.
	Then, it follows
	\begin{equation*}
		\mathcal{Y}_6(M, g)= \mathcal{Y}^*_6(M, g) \leqslant \mathcal{Y}_6(\mathbb{S}^n,g_0),
	\end{equation*}
	where
	\begin{equation*}
		\mathcal{Y}_6^*(M, g)=\inf _{\bar{g} \in[g], Q_2(\bar{g})>0, Q_4(\bar{g})>0} \mathcal{Q}_6(\bar{g})
	\end{equation*}
	with the equality implying $(M^n,g)=(\mathbb S^n,g_0)$.
	Moreover, one has  
	\begin{equation*}
		\mathbb{Y}_6(\mathbb{S}^{n-1} \times\mathbb{S}^1)=\mathbb{Y}_6^{+}(\mathbb{S}^{n-1} \times\mathbb{S}^1)=\mathcal{Y}_6^*(\mathbb{S}^n,g_0).
	\end{equation*}      
\end{corollary}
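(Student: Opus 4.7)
The plan is to couple the sharp Aubin-type ceiling and positive attainment of Proposition~\ref{lm:aubin} with the bifurcation ladder of Theorem~\ref{thm1}; the statement splits into a within-class assertion $\mathcal{Y}_6(M,g) = \mathcal{Y}_6^*(M,g) \leqslant \mathcal{Y}_6(\mathbb{S}^n,g_0)$ with rigidity, and a supremum-across-classes assertion $\mathbb{Y}_6 = \mathbb{Y}_6^+ = \mathcal{Y}_6^*(\mathbb{S}^n,g_0)$. For the within-class part, $M = \mathbb{S}^1 \times \mathbb{S}^{n-1}$ is closed, locally conformally flat, and its product-of-Einstein-factors metric has $R(g) > 0$; with \ref{itm:P} we may apply \ref{itm:A}, extracting a smooth positive minimizer $u$ for $\mathcal{Y}_6(M,g)$ and the bound $\mathcal{Y}_6(M,g) \leqslant \mathcal{Y}_6(\mathbb{S}^n,g_0)$. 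Positive attainment collapses the trivial chain $\mathcal{Y}_6 \leqslant \mathcal{Y}_6^+ \leqslant \mathcal{Y}_6^*$ to $\mathcal{Y}_6 = \mathcal{Y}_6^+$. For the second equality I would check that the extremal $\bar g = u^{4/(n-6)} g$ lies in the admissible set $\{Q_2 > 0,\, Q_4 > 0\}$: the ambient $g$ already has $Q_2(g)$ and $Q_4(g)$ explicit positive constants (from the Einstein structure of each factor), so the conformal class $[g]$ meets both the positive Yamabe and positive fourth-order cones, and a sixth-order analog of the Hang--Yang positivity-propagation argument~\cite{MR3518237} transfers these admissibilities to the minimizer. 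Strictness of $\mathcal{Y}_6(M,g) < \mathcal{Y}_6(\mathbb{S}^n,g_0)$ is the rigidity clause of \ref{itm:A}, since $\mathbb{S}^1 \times \mathbb{S}^{n-1}$ is not diffeomorphic to $\mathbb{S}^n$.

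For the cross-class identity, observe first that $\mathcal{Y}_6^*(\mathbb{S}^n,g_0) = \mathcal{Y}_6(\mathbb{S}^n,g_0)$ by the same within-class reasoning applied to the round sphere, where both quantities are attained by constants. The upper bound $\mathbb{Y}_6(M) \leqslant \mathbb{Y}_6^+(M) \leqslant \mathcal{Y}_6(\mathbb{S}^n,g_0)$ follows by applying the Aubin ceiling of Proposition~\ref{lm:aubin} in every conformal class of $M$ and taking the supremum. For the matching lower bound, I would feed the bifurcated family $\{g_\ell\}$ of Theorem~\ref{thm1} into the supremum: the classes $[g_\ell]$ are pairwise distinct and Theorem~\ref{thm1} gives $\mathcal{Y}_6^+(M,g_\ell) \to \mathcal{Y}_6^+(\mathbb{S}^n,g_0)$, while positive attainment in each $[g_\ell]$ (which still lies in the hypotheses of Proposition~\ref{lm:aubin}, since it contains a representative of positive scalar curvature) yields $\mathcal{Y}_6(M,g_\ell) = \mathcal{Y}_6^+(M,g_\ell)$. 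Hence the same sequence reaches the round value simultaneously along $\mathbb{Y}_6$ and $\mathbb{Y}_6^+$, and the squeeze closes.

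The principal obstacle is the identity $\mathcal{Y}_6^+(M,g) = \mathcal{Y}_6^*(M,g)$. Because $Q_2$ and $Q_4$ are not conformally invariant, the constancy and positivity of $Q_6$ at the minimizer is silent on their signs, so one cannot simply read off admissibility from \ref{itm:A}. The argument must exploit the specific Einstein-factor structure of $\mathbb{S}^1 \times \mathbb{S}^{n-1}$, where $Q_2(g)$ and $Q_4(g)$ are explicit positive constants on the background, and a sixth-order Hang--Yang-type positivity propagation within $[g]$ to transfer the admissibilities from $g$ to the extremal conformal metric.
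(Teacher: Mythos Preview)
The paper does not supply a separate proof of this corollary; it is announced in the introduction as a by-product of Proposition~\ref{lm:aubin} and Theorem~\ref{thm1}, with no dedicated argument. Your sketch is precisely the natural unpacking of that claim and matches the paper's implicit route: Proposition~\ref{lm:aubin} delivers positive attainment and the ceiling for the within-class identity, while Theorem~\ref{thm1} supplies the approximating sequence for the cross-class identity.

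Two points deserve comment. First, a small overreach: you justify $\mathbb{Y}_6^+(M)\leqslant\mathcal{Y}_6(\mathbb{S}^n,g_0)$ by invoking Proposition~\ref{lm:aubin} \emph{in every conformal class} of $M$. That proposition carries hypotheses (locally conformally flat, $R\geqslant0$, \ref{itm:P}, \eqref{qing-raske}) that a generic conformal class on $\mathbb{S}^1\times\mathbb{S}^{n-1}$ need not satisfy. The non-strict ceiling $\mathcal{Y}_6^+(M,g)\leqslant\mathcal{Y}_6^+(\mathbb{S}^n,g_0)$ holds universally for closed $(M^n,g)$ with $n\geqslant7$ by the standard concentrated-bubble test-function computation, and that is what should underwrite the supremum bound; Proposition~\ref{lm:aubin} enters only where rigidity or positive attainment is needed.

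Second, you correctly isolate the equality $\mathcal{Y}_6^+(M,g)=\mathcal{Y}_6^*(M,g)$ as the genuine gap, requiring a sixth-order positivity-propagation argument to transfer $Q_2>0$ and $Q_4>0$ to the minimizing metric. The paper does not prove this step either: it is the sixth-order analogue of the Gursky--Hang--Lin and Hang--Yang identities $\mathcal{Y}_4=\mathcal{Y}_4^+=\mathcal{Y}_4^*$ recalled in the introduction, and sits closer to the paper's Conjecture~1 than to anything established in the text. Your honest flag accurately reflects the actual status of this step.
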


The constant $Q$-curvature problem can also be posed on noncompact manifolds (possibly with a simple structure at infinity). 
The natural boundary condition is the completeness of the metric. For instance, this is trivially satisfied by metrics that descend to a compact quotient $M\setminus\Gamma$; we call these periodic solutions with period $\Gamma\subset{\rm iso}(M)$;  this is inspired by the classical work of Schoen and Yau \cite{MR931204}.
More precisely, we seek for complete metrics on $M \setminus\Lambda$ of the form $\bar{g} = \mathrm{u}^{{4}/{(n-6)}} g$, where $\Lambda^k\subset M^n$ is a $k$-dimensional submanifold.	
Namely, when $(M^n,g)$ is a compact locally conformally flat manifold with positive scalar curvature $R_g\geqslant0$, one has that $\Lambda:=\mathbb S^n\setminus\mathscr{D}(\widetilde{M})$ can be seen as the complement of image of the developing map $\mathscr{D}:\widetilde{M}\hookrightarrow\mathbb S^n$, where $\widetilde{M}$ is the universal cover of $M$.

To $g$ to be complete on $M^n\setminus 
\Lambda^k$, one has to impose $\liminf_{\ud_g(p,\Lambda)} \mathrm{u}(p) = +\infty$. 	
As before, writing $\bar{g} = \mathrm{u}^{{4}/{(n-6)}} g$, one has that $Q_6(g) = Q_n$ on 
$M \setminus 
\Lambda$ is equivalent to finding $\mathrm{u}\in \mathcal{C}^\infty(M \setminus \Lambda)$ solving
\begin{equation}\tag{$\mathcal{Q}_{6,g,\Lambda}$}\label{ourequationmanifoldsingular}
	P_6({g})\mathrm{u}=c_n \mathrm{u}^{\frac{n+6}{n-6}} \quad \mbox{on} \quad M \setminus \Lambda.
\end{equation}
We denote by $\Omega:=M\setminus \Lambda$ and ${\rm Met}^{\infty}(\Omega)=\cup_{j=1}^{\infty}{\rm Met}^{j}(\Omega)$
the set of smooth metrics over $\Omega$, 
where
${\rm Met}^{j}(\Omega)$ denotes the set of $\mathcal{C}^j$ metrics over $\Omega$ with $j\in\mathbb N$ is equipped with the Gromov-Hausdorff topology.	
In this direction, it is convenient to introduce some terminology.
For any $g\in{\rm Met}^\infty(\Omega)$, let us define 
the moduli space of complete metrics below 
\begin{align*}
	\mathcal{M}_{6,\Lambda}({g})=\left\{\bar{g}\in [g] : \ \text{$\bar{g}$ is complete on $\Omega$} \ {\rm and} \  Q_6(g)\equiv Q_n   \right\}\subset {\rm Met}^{\infty}(\Omega).	
\end{align*}
This set is called the moduli space of constant sixth order $Q$-curvature metrics singular at $\Lambda$.
When $\Lambda=\varnothing$, we shall denote $	\mathcal{M}_{6,\Lambda}({g})=\mathcal{M}_6({g})$.
Also when $g=g_0$, we simply denote $\mathcal{M}_{6,\Lambda}({g}_0)=\mathcal{M}_{6,\Lambda}$.

Our second main result is inspired by \cite[Theorem B]{MR4251294} and exploits the abundance of discrete cocompact groups on symmetric spaces to find infinitely many periodic solutions with different periods on noncompact manifolds, reflecting a genuinely noncompact phenomenon.
\begin{theorem}\label{thm2}
	Let $(M^n, g)$ be a closed Riemannian manifold with $n \geqslant 7$.
	Assume that \ref{itm:P} and \ref{itm:A'} hold.
	Suppose that $(M^n_{\infty}, g_{\infty}) \rightarrow(M^n, g)$ is a Riemannian covering whose group of deck transformations has infinite profinite completion. 
	Then, there exists infinitely many nonhomothetic conformal periodic metrics on $\mathcal{M}_6(g_C\oplus g_{\mathcal{S}})$ with constant positive sixth order $Q$-curvature.
\end{theorem}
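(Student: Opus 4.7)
My approach is a covering-space bifurcation in the spirit of \cite[Theorem B]{MR4251294}, adapted to the sixth-order GJMS setting.

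First, I would extract from the infinite-profinite-completion hypothesis on $\Gamma := \mathrm{Deck}(M_\infty \to M)$ a strictly descending tower of finite-index normal subgroups $\Gamma \supsetneq \Gamma_1 \supsetneq \Gamma_2 \supsetneq \cdots$ with indices $d_k := [\Gamma : \Gamma_k] \to \infty$. Setting $M_k := M_\infty / \Gamma_k$ with the lifted metric $g_k$ produces a tower $\pi_k : (M_k, g_k) \to (M, g)$ of closed Riemannian coverings, each of constant positive $Q_6$-curvature equal to $Q_6(g)$. Both \ref{itm:P} and \ref{itm:A'} are inherited by every $(M_k, g_k)$: the positive Green function pulls back via summation over deck transformations, and the strict Aubin-type inequality $\Theta_6(M_k, g_k) > \Theta_6(\mathbb{S}^n, g_0)$ persists once $M_k$ is not conformally round, which holds for $k$ large. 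Hence Proposition~\ref{lm:aubin}\ref{itm:A'} delivers a smooth positive maximizer $f_k \in \mathcal{C}^\infty_+(M_k)$ of $\Theta_6(M_k, g_k)$, giving a constant positive $Q_6$-curvature metric $\bar g_k := f_k^{4/(n+6)} g_k$ on $M_k$. Pulling back along the projection $p_k : M_\infty \to M_k$, the metric $\tilde g_k := p_k^* \bar g_k$ is complete, $\Gamma_k$-periodic, and of constant positive $Q_6$-curvature, hence a member of the relevant moduli space.

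The heart of the argument is to show that $\{\tilde g_k\}_{k\geqslant 1}$ contains infinitely many pairwise nonhomothetic metrics, for which I would implement a spectral-bifurcation mechanism. On each cover, consider the one-parameter family of conformal rescalings $g_{k,t} = t^{2} g_k$; the constant reference solution of the associated prescribed-curvature equation degenerates precisely when the linearization $L_{k,t} := P_6(g_k) - t\,\tfrac{n+6}{n-6}\,c_n\,I$ acquires a nontrivial kernel. Decomposing $L^2(M_k)$ under the natural action of $\Gamma/\Gamma_k$ and using the fact that the number of eigenvalues of $P_6(g_k)$ below any fixed threshold grows unboundedly with $d_k$, I obtain an unbounded family of transverse eigenvalue crossings across the tower. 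At each such crossing an equivariant Crandall--Rabinowitz bifurcation yields a local branch of non-constant positive solutions of the prescribed-$Q_6$ equation on $M_k$; their pullbacks to $M_\infty$ provide the desired infinite family of nonhomothetic periodic metrics.

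The main obstacle is the rigidity step, namely verifying that each bifurcating branch is genuinely new rather than the pullback of a solution already produced on a lower cover $M_j$ with $j<k$. I plan to address this by restricting the bifurcation analysis to eigenvalue crossings associated with irreducible $\Gamma/\Gamma_k$-representations that do not factor through any $\Gamma/\Gamma_j$ for $j<k$; combined with the local-uniqueness part of Crandall--Rabinowitz, this forces the bifurcating branches to be supported on genuinely new Fourier modes, and therefore to descend to pairwise distinct periodic metrics on $M_\infty$, completing the construction.
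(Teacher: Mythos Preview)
Your first paragraph is on the right track and matches the paper's setup: extract the tower of finite covers $(M_k,g_k)$ with $d_k\to\infty$, verify that \ref{itm:P} and \ref{itm:A'} pass to each cover, and use \ref{itm:A'} to produce a maximizer $\bar g_k\in[g_k]$ of $\Theta_6$ with constant positive $Q_6$-curvature. From that point on, however, you abandon the very tool you just invoked and replace it with a spectral-bifurcation scheme that is both unnecessary and, as written, does not work.

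The paper's argument after this setup is elementary and uses $\Theta_6$ directly. Fix any constant-$Q_6$ metric already produced, say $\bar g_1$ on $M_1$ with $Q_6(\bar g_1)=Q_1>0$, and let $\tilde h_\ell$ be its pullback to $M_\ell$. Since $\tilde h_\ell$ also has $Q_6\equiv Q_1$, one computes
\[
\Theta_6(M_\ell,[\tilde g_\ell])\ \text{evaluated at}\ \tilde h_\ell \;=\; Q_1^{-1}\,\mathrm{vol}_{\tilde h_\ell}(M_\ell)^{-6/n}
\;=\; Q_1^{-1}\,\mathrm{vol}_{\bar g_1}(M_1)^{-6/n}\Bigl(\tfrac{d_\ell}{d_1}\Bigr)^{-6/n}\longrightarrow 0.
\]
So for $\ell$ large this value drops below $\Theta_6(\mathbb{S}^n,g_0)$; by \ref{itm:A'} the true maximizer $\bar g_\ell$ must therefore be nonhomothetic to $\tilde h_\ell$. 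Pulling both back to $M_\infty$ gives two nonhomothetic periodic constant-$Q_6$ metrics, and iterating (restarting the argument from $\bar g_\ell$) yields infinitely many. No linearization, no Crandall--Rabinowitz, no representation theory.

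Your bifurcation scheme has concrete problems. The proposed one-parameter family $g_{k,t}=t^2 g_k$ is a homothety, so every $g_{k,t}$ lies in the same conformal class and already has constant $Q_6$-curvature; there is no nontrivial ``trivial branch'' from which to bifurcate, and the operator $L_{k,t}=P_6(g_k)-t\,\tfrac{n+6}{n-6}c_n I$ is not the linearization of any geometrically meaningful family. More fundamentally, the covers $M_k$ form a discrete sequence, so there is no continuous bifurcation parameter intrinsic to the problem; the eigenvalue-count observation is correct but does not by itself produce transverse crossings of anything. Finally, your proposed fix for the rigidity step (restricting to representations of $\Gamma/\Gamma_k$ that do not factor through $\Gamma/\Gamma_j$) would require a transversality and simplicity analysis that you have not supplied, whereas the volume-scaling argument sidesteps the issue entirely.
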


As a by-product of the proof, we have the following result
\begin{corollary}\label{cor3}
	Let $(C^{n_1}\times \mathcal{S}^{n_2},g_C\oplus g_{\mathcal{S}})$ be a complete noncompact Riemannian manifold with $n_1+n_2=n\geqslant 7$.
	Assume that \ref{itm:P} and \ref{itm:A'} hold.
	Assume that
	$(C^{n_1}, g_C)$ is a closed manifold with constant scalar curvature, and $(\mathcal{S}^{n_2}, g_{\mathcal{S}})$ is a simply-connected symmetric space of noncompact or Euclidean type.        
	Then, there exists infinitely many nonhomothetic conformal periodic metrics on $\mathcal{M}_6(g_C\oplus g_{\mathcal{S}})$ with constant positive sixth order $Q$-curvature.
\end{corollary}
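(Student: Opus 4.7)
My plan is to derive Corollary~\ref{cor3} from Theorem~\ref{thm2} by exhibiting a closed Riemannian base $(M,g)$ together with a Riemannian covering from $(C^{n_1}\times \mathcal{S}^{n_2}, g_C\oplus g_{\mathcal{S}})$ whose deck transformation group has infinite profinite completion; the corollary will then be a direct application.

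First, I would use the hypothesis that $(\mathcal{S}^{n_2}, g_{\mathcal{S}})$ is a simply-connected symmetric space of noncompact or Euclidean type to produce a torsion-free cocompact lattice $\Gamma \subset \mathrm{Iso}(\mathcal{S})$: in the noncompact-type case this is Borel's classical existence theorem for cocompact arithmetic lattices in semisimple Lie groups, while in the Euclidean case $\mathcal{S}^{n_2} \cong \mathbb{R}^{n_2}$ and one may take any freely-acting Bieberbach group, e.g. a translation lattice $\mathbb{Z}^{n_2}$. Setting $N := \mathcal{S}/\Gamma$ yields a closed locally symmetric manifold, and I equip the compact product $M := C \times N$ with the metric $g$ descended from $g_C \oplus g_{\mathcal{S}}$. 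Since $\mathcal{S}$ is simply-connected, the projection $\pi : (C \times \mathcal{S}, g_C \oplus g_{\mathcal{S}}) \to (M, g)$ is a Riemannian covering with deck transformation group isomorphic to $\Gamma$.

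Next, to invoke Theorem~\ref{thm2} I need to show $\Gamma$ has infinite profinite completion, and since $\Gamma$ is infinite it suffices to verify residual finiteness. In the noncompact-type case, $\Gamma$ is a finitely generated subgroup of a real linear Lie group, hence linear, and Malcev's theorem on linear groups gives residual finiteness. In the Euclidean case, $\Gamma$ is virtually $\mathbb{Z}^{n_2}$ as a Bieberbach group, which is manifestly residually finite. With hypotheses \ref{itm:P} and \ref{itm:A'} being inherited by $(M,g)$ from the assumptions of Corollary~\ref{cor3} on the product metric, Theorem~\ref{thm2} applies and furnishes infinitely many pairwise nonhomothetic complete conformal metrics with constant positive sixth order $Q$-curvature on $C \times \mathcal{S}$, each periodic with respect to a finite-index subgroup of $\Gamma$ and thus belonging to $\mathcal{M}_6(g_C \oplus g_{\mathcal{S}})$.

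The main obstacle I anticipate is not the group-theoretic input, which is classical, but rather the clean descent of \ref{itm:P} and \ref{itm:A'} from the noncompact product to the compact base $M$: one must ensure the product metric on $M$ still possesses a positive Green function for $P_6(g)$ and satisfies the dual Aubin-type inequality. This is where the constant scalar curvature assumption on $(C, g_C)$ plays its role, since together with the locally symmetric structure of the second factor it forces the product to fit the Gover--Leitner framework \ref{itm:GL}, and hence allows \ref{itm:P} (and, by the expectation recorded after \ref{itm:GL}, also \ref{itm:A'}) to be verified in the relevant high-dimensional regime.
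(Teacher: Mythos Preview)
Your first two paragraphs are essentially the paper's own proof: invoke Borel's theorem to obtain a cocompact lattice $\Gamma$ in $\mathrm{Iso}(\mathcal{S})$, form the closed base $M=C\times(\mathcal{S}/\Gamma)$, observe that $\Gamma\cong\pi_1(\mathcal{S}/\Gamma)$ is infinite and residually finite (hence has infinite profinite completion), and then apply Theorem~\ref{thm2}. This is exactly what the paper does, in one short paragraph.

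Your third paragraph, however, introduces a spurious obstacle and an incorrect resolution. The hypotheses \ref{itm:P} and \ref{itm:A'} in Corollary~\ref{cor3} are \emph{assumptions}, taken to hold on the closed base $M=C\times\Sigma$; the paper simply writes ``Under the assumptions \ref{itm:P} and \ref{itm:A'}, the proof is finished as a direct application of Theorem~\ref{thm2}.'' There is no descent to verify. In the proof of Theorem~\ref{thm2} these properties are propagated \emph{upward} from the base to finite covers (via the argument of \cite[Lemma~3.6]{MR2301449}), not downward from the noncompact universal cover. Moreover, your appeal to the Gover--Leitner framework \ref{itm:GL} is misapplied: \ref{itm:GL} requires both factors to be \emph{Einstein} with a specific linear relation between their scalar curvatures, whereas Corollary~\ref{cor3} only assumes $(C,g_C)$ has constant scalar curvature. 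So that route would not furnish \ref{itm:P} here, and in any case it is not needed. Drop the third paragraph and your argument matches the paper's.
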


An immediate consequence of Corollary~\ref{cor3} is that one can find infinitely many complete metrics with constant $Q$-curvature on the round sphere blowing up along an equatorial subsphere.

\begin{corollary}\label{cor4}
	Let         
	$(\mathbb S^n\setminus\mathbb S^k,g_{0})$ be the standard round sphere singular along a minimal equatorial $k$-subsphere with $n\geqslant 7$ and $0\leqslant k<\frac{n-6}{2}$.
	Suppose that \ref{itm:P} holds.
	Then, there exist infinitely many complete conformal metrics on $\mathcal{M}_{6,\mathbb S^k}$ with constant positive sixth order $Q$-curvature.
\end{corollary}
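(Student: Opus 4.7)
The plan is to recognize $(\mathbb{S}^n\setminus\mathbb{S}^k,g_0)$ as a conformal product and to reduce the statement to Corollary~\ref{cor3}. Indeed, a stereographic projection based at a point of the equatorial $\mathbb{S}^k$, followed by an Emden--Fowler-type change of variables that separates the distance to $\mathbb{R}^k\subset\mathbb{R}^n$ from the transverse unit sphere, yields the classical conformal diffeomorphism
\begin{equation*}
(\mathbb{S}^n\setminus\mathbb{S}^k,g_0)\ \simeq\ \bigl(\mathbb{S}^{n-k-1}\times\mathbb{H}^{k+1},\,g_{\mathbb{S}^{n-k-1}}\oplus g_{\mathbb{H}^{k+1}}\bigr).
\end{equation*}
Under this identification, complete conformal metrics on $\mathbb{S}^n\setminus\mathbb{S}^k$ singular along $\mathbb{S}^k$ correspond bijectively to complete conformal metrics on the product, and equation \eqref{ourequationmanifoldsingular} is preserved thanks to the conformal covariance \eqref{transformationlawoperator} of $P_6$.

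Next I would verify the hypotheses of Corollary~\ref{cor3} with $n_1=n-k-1$ and $n_2=k+1$: the factor $\mathbb{S}^{n-k-1}$ is closed with constant positive scalar curvature, $\mathbb{H}^{k+1}$ is a simply-connected symmetric space of noncompact type, and $n_1+n_2=n\geqslant 7$. Assumption \ref{itm:P} is part of the hypotheses, while \ref{itm:A'} is obtained from Proposition~\ref{lm:aubin} applied to a closed locally conformally flat Kleinian quotient $(\mathbb{S}^n\setminus\mathbb{S}^k)/\Gamma$, where $\Gamma$ is a cocompact lattice in $\operatorname{iso}(\mathbb{H}^{k+1})\hookrightarrow\operatorname{iso}(\mathbb{H}^{n})$. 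Such a $\Gamma$ has limit set $\mathbb{S}^k$ and Poincar\'e exponent $\delta(\Gamma)=k$, so the Qing--Raske condition \eqref{qing-raske} reads $k<(n-6)/2$, which is precisely the dimensional restriction assumed in the statement. Lemma~\ref{prop:greenfunction1} then gives nonnegativity of the mass and, via Proposition~\ref{lm:aubin}, the strict Aubin-type inequality needed to invoke Corollary~\ref{cor3}.

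Corollary~\ref{cor3} then produces infinitely many pairwise nonhomothetic complete periodic conformal metrics in $\mathcal{M}_6(g_{\mathbb{S}^{n-k-1}}\oplus g_{\mathbb{H}^{k+1}})$ of constant positive sixth order $Q$-curvature; pulling these back through the conformal identification above yields infinitely many pairwise nonhomothetic elements of $\mathcal{M}_{6,\mathbb{S}^k}$. The main point I expect to handle carefully is the analytic translation of boundary behavior: one must check that completeness at infinity in the $\mathbb{H}^{k+1}$ factor, combined with the periodicity furnished by the bifurcation of Theorem~\ref{thm2}, corresponds exactly to the blow-up condition $\liminf_{\operatorname{d}_{g_0}(p,\mathbb{S}^k)\to 0}\mathrm{u}(p)=+\infty$ defining $\mathcal{M}_{6,\mathbb{S}^k}$, and that distinct bifurcating periods on the cylinder end produce genuinely nonhomothetic singular metrics on $\mathbb{S}^n\setminus\mathbb{S}^k$. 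Both checks reduce to the conformal factor relating the product metric to $g_0$ being fixed and positive, so that homotheties (respectively, blow-up behavior) are transported faithfully.
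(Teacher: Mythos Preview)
Your proposal is correct and follows essentially the same route as the paper: identify $(\mathbb{S}^n\setminus\mathbb{S}^k,g_0)$ conformally with $\mathbb{S}^{n-k-1}\times\mathbb{H}^{k+1}$, pass to a compact Kleinian quotient, use the locally conformally flat structure together with \eqref{qing-raske} to obtain \ref{itm:A'} from Proposition~\ref{lm:aubin}, and then invoke Corollary~\ref{cor3}. Your explicit identification $\delta(\Gamma)=k$ for a cocompact lattice in $\operatorname{iso}(\mathbb{H}^{k+1})$, which makes the dimensional restriction $k<(n-6)/2$ transparent, and your remarks on transporting completeness and nonhomothety through the fixed conformal factor, are welcome clarifications that the paper leaves implicit; one small point you might add is the verification $R(g_{\mathbb{S}^{n-k-1}}\oplus g_{\mathbb{H}^{k+1}})>0$, needed for Proposition~\ref{lm:aubin}, which follows easily from $k<(n-6)/2$.
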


If one assumes that the results by Case and Malchiodi \cite{case-malchiodi} holds, then assumption \ref{itm:P} in Corollary~\ref{cor2} could be changed by the dimension condition $n\geqslant 10$.
Furthermore, it is not hard to check that the manifolds satisfying \ref{itm:GL} would provide a large class of manifolds to apply this general bifurcation technique on Riemannian coverings.

The restriction on the dimension of the singular set $0\leqslant k<{(n-6)/2}$ is sharp in the sense that the Poincar\'e exponent of the holonomy representation $\Gamma\subset {\rm iso}({M})\subset \mathbb{H}^{k+1}$ is not well-defined otherwise \cite{MR2232210} for its fourth order counterpart.
We also remark that the positivity condition \ref{itm:P} is usually hard to verify, and it is taken as a hypothesis in some recent papers (c.f. \cite{arXiv:2007.10180,MR2558328}).

In some exceptional cases, this property can be proved.
For instance, in the locally conformally flat case, this assumption is intimately related with the size of the Poincar\'e exponent, which by the injectivity of the developing map, coincides with the Hausdorff dimension of the singular set, that is, $0<\delta(\Gamma)<k$.
Furthermore, observe that for $k=0$, the singular set is a finite set of points. In this case, in \cite{arXiv:2302.05770} it is proved compactness for this moduli space of metrics under certain conditions on the asymptotic necksize of the isolated singularities. 

The proof of our main theorems is mainly based on variational bifurcation theory and topological methods \cite{MR1037143,MR2859263,MR4167290}. 
The primary sources of difficulties rely on the need for maximum principle enjoyed by this type of operator.
We are restricted to manifolds of dimension $n \geqslant 7$, as the cases $3\leqslant n\leqslant5$ and especially $n=6$ require a separate discussion.
Recently, for the case of constant fractional $Q$-curvature metrics $\sigma\in(0,1)$, Bettiol, Gonz\'alez, and Maalaoui \cite{arXiv:2302.11073} also used bifurcation techniques to prove similar results on round spheres.
In a companion paper \cite{andrade-piccione-wei}, we intend to study constant sixth order $Q$-curvature metrics on closed manifolds. 

Let us compare our results with the existing literature.
In his pioneering work, Yamabe \cite{MR0125546} was the first to introduce these invariants and study the problem of finding a constant scalar curvature metric in a given conformal class. 	
After that, Aubin \cite{MR0431287} proved that $ \mathcal{Y}_2(M, g) \leqslant\mathcal{Y}_2(\mathbb{S}^n,g_0)$.
He also showed that whenever $\mathcal{Y}_2(M, g)<\mathcal{Y}_2(\mathbb S^n,g_0)$ is satisfied, one can find a smooth metric with constant scalar curvature metric $\bar{g} \in[g]$ such that $\mathcal{Q}_2(\bar{g})=\mathcal{Y}_2(M, g)$. 
At last, Schoen \cite{MR788292} proved that $\mathcal{Y}_2(M, g)<\mathcal{Y}_2(\mathbb{S}^n,g_0)$ for each conformal class $[g] \neq[g_0]$. 
In particular, $\mathcal{Y}_2(M, g)<\mathcal{Y}_2(\mathbb{S}^n,g_0)$ whenever $M$ is not equivalent to the round sphere.  

Nevertheless, the equality $\mathbb{Y}_2(M)=\mathbb{Y}_2(\mathbb{S}^n)$ may occur even when $M$ is not the sphere. 
In particular, he found in \cite{MR1173050} an explicit sequence of metrics $\{g_\ell\}_{\ell\in\mathbb N}\subset{\rm Met}^\infty(\mathbb{S}^1 \times \mathbb{S}^{n-1})$ such that 
\begin{equation*}
	\mathcal{Y}_2(\mathbb{S}^1 \times \mathbb{S}^{n-1},g_\ell) \rightarrow \mathcal{Y}_2( \mathbb{S}^n,g_0) \quad {\rm as} \quad  \ell \rightarrow \infty,
\end{equation*}
and so $\mathbb{Y}_2(\mathbb{S}^1 \times \mathbb{S}^{n-1})=\mathbb{Y}_2(\mathbb{S}^n)$. 
As the underlying manifolds are not diffeomorphic, the equality above cannot be realized by a smooth metric on $\mathbb{S}^1 \times \mathbb{S}^{n-1}$.
As a consequence of the maximum principle, the Yamabe invariant $\mathcal{Y}_2(M, g)$ is always nonnegative.
Therefore, the minimizers of the functional $\mathcal{Q}_2$ over all nontrivial functions in $H^{1}(M)$ will be automatically positive.   

Nonetheless, minimizers of $\mathcal{Y}_{2m}(M, g)$ with $m=2,3$, if exist, might change sign.  
On the fourth order setting, 
Esposito and Robert \cite{MR1942129} showed that $\mathcal{Y}_4^{+}(M, g)<+\infty$ for each conformal class $[g]$. 
Gursky, Hang and Lin \cite{MR3509928} proved that if $n\geqslant 6$ and if $\mathcal{Y}_2(M, g)>0$ and $\mathcal{Y}_4^{+}(M, g)>0$ then
\begin{equation*}
	\mathcal{Y}_4(M, g)=\mathcal{Y}_4^{+}(M, g)=\mathcal{Y}_4^*(M, g):=\inf _{\bar{g} \in[g], Q_2(\bar{g})>0} \mathcal{Q}_4(\bar{g}).
\end{equation*}    
Thereafter, Hang and Yang \cite{MR3518237} proved that if $\mathcal{Y}_2(M, g)>0$ and $Q_4(g) \geqslant 0$ with $Q_4(g) \not \equiv 0$, then
\begin{equation}\label{aubinineqfourth}
	\mathcal{Y}_4(M, g)=\mathcal{Y}_4^*(M, g) \leqslant \mathcal{Y}_4(\mathbb{S}^n,g_0)
\end{equation}
and that equality implies $(M^n, g)\simeq(\mathbb{S}^n, g_0)$.      
Also, under these hypotheses, there exists a smooth, constant $Q$-curvature metric $\bar{g} \in[g]$ such that $\mathcal{Q}_4(\bar{g})=\mathcal{Y}_4^*(M, g)$.
The result referenced above implies the equality $\mathbb{Y}_4^{+}(\mathbb{S}^{n-1} \times\mathbb{S}^1)=\mathcal{Y}_4^*(\mathbb{S}^n)$.	
More precisely, they have introduced a new dual conformal invariant for any $(M^n,g)$ be a closed Riemannian manifold with $n\geqslant 5$.
Furthermore, they obtained 
\begin{equation}\label{aubinineqfourthreversed}
	\mathcal{Y}_4(M, g)=\mathcal{Y}_4^*(M, g) \leqslant \mathcal{Y}_4(\mathbb{S}^n,g_0).
\end{equation}	
Based on \eqref{aubinineqfourth}, by using classical bifurcation techniques on the universal cover of Schoen \cite{MR1173050}, Ratzkin \cite{arXiv:2002.05939} proved nonuniqueness results for fourth order $Q$-curvature metrics on the product $\mathbb S^1\times\mathbb S^{n-1}$. 
Almost concomitantly, Bettiol et al. \cite{MR4251294} applied a topological method to extend this construction for much broader situations; this depends on the reversed Aubin inequality in \eqref{aubinineqfourthreversed}.
These result are inspired by the classical works on the scalar curvature equation \cite{MR0125546,MR0431287,MR1173050}.
We also refer the reader to \cite{MR3504948,MR3803113,MR3105774,MR788292} for more result using a related bifurcation technique. 

%
In the light of \cite{MR3618119}, we speculate that both geometric inequalities above hold under milder conditions of the underlying manifold.

\begin{statement}{Conjecture~1}\label{conj1}
	Let $(M^n,g)$ be a closed Riemannian manifold with $n\geqslant7$.
	Assume that 
	\begin{equation}\label{hang-yang}\tag{$\mathcal{HY}$}
		\mathcal{Y}_2(M, g)\geqslant0, \quad \mathcal{Y}_4(M, g)\geqslant0, \quad {\rm and} \quad Q_6({g})\geqslant 0 \quad {\rm but} \quad Q_6({g})\not\equiv 0.
	\end{equation} 
	Then, \ref{itm:P} is satisfied. 
	In particular, both \ref{itm:A} and \ref{itm:A'} also hold.
\end{statement}
It is not hard to see that $(\mathbb S^1\times\mathbb S^{n-1},g_{\mathbb S^1}\oplus g_{\mathbb S^{n-1}})$ satisfies \eqref{hang-yang}. 
This fact somehow supports the conjecture above.
Consequently, all the methods to prove the existence of infinitely many conformal metrics with constant sixth order $Q$-curvature can be adapted promptly
\begin{statement}{Conjecture~2}
	Let $(M^n,g)=(C^{n_1}\times \mathcal{S}^{n_2},g_C\oplus g_{\mathcal{S}})$ be a complete noncompact Riemannian manifold with $n_1+n_2=n\geqslant 7$.
	Suppose that \eqref{hang-yang} holds.
	Assume that
	$(C^{n_1}, g_C)$ is a closed manifold with constant scalar curvature, and $(\mathcal{S}^{n_2}, g_{\mathcal{S}})$ is a simply-connected symmetric space of noncompact or Euclidean type.
	Then, there exists infinitely many nonhomothetic conformal periodic metrics on $\mathcal{M}_6(g_C\oplus g_{\mathcal{S}})$ with constant positive sixth order $Q$-curvature.
\end{statement}

Let us now describe the plan for the rest of the manuscript.
In Section~\ref{sec:notation}, we establish some notations that will be used throughout the paper.
In Section~\ref{sec:toymodel}, we use a bifurcation technique based on the jump of the Morse index on the universal cover to prove Theorem~\ref{thm1}.     
In Section~\ref{sec:noncompactcase}, we generalize the latter approach to the case of an infinite tower of Riemannian covers to give proof for Corollary~\ref{cor3}. 
In conclusion, we also explain how these main results are interconnected.

\section{Notation}\label{sec:notation}
Let us establish some standard terminology and definitions.
In what follows, we will always be using Einstein's summation convection.

\begin{itemize}
	\item $\delta=g_{\mathbb R^n}$ denotes the standard Euclidean metric;
	\item $g_0=g_{\mathbb S^n}$ denotes the standard round metric;
	\item  $(e_i)_{i=1}^n$ denotes a local coordinate frame;
	\item ${\rm Rm}_g\in\mathfrak{T}^3_1(M)$ denotes the {Riemannian curvature tensor},
	\item $\accentset{\circ}{\rm Rm}_g\in\mathfrak{T}^4_0(M)$ denotes the {covariant Riemann curvature tensor},
	\item ${\rm Ric}_g={\rm tr}_g\accentset{\circ}{\rm Rm}_g\in\mathfrak{T}^2_0(M)$ denotes the {Ricci curvature tensor} , which can be expressed as
	${\rm Ric}_{jk}=\accentset{\circ}{\rm Rm}_{i jk}^{i}=g^{i\ell} \accentset{\circ}{\rm Rm}_{ijk\ell}$;
	\item   $R_g={\rm tr}_g{\rm Ric}_g\in\mathfrak{T}^0_0(M)$ denotes the {scalar curvature} given by $R_g=g^{ij}{\rm Ric}_{ij}$;
	\item $\mathfrak{T}^r_s(M)$ denotes the set of $(r,s)$-type tensor over $M$  with $\mathfrak{T}^0_0(M)=\mathcal{C}^{\infty}(M)$;
	\item  $\Delta_{g}:=g^{i j} \nabla_{i} \nabla_{j}$ denotes the Laplace--Beltrami operator;
	\item $\nabla_g$ denotes the Levi--Civita connection;
	\item ${\rm tr}_g:\mathfrak{T}^r_s(M)\rightarrow\mathfrak{T}^{r-2}_s(M)$ denotes the trace operator.
	\item $a_1 \lesssim a_2$ if $a_1 \leqslant C a_2$, $a_1 \gtrsim a_2$ if $a_1 \geqslant C a_2$, and $a_1 \simeq a_2$ if $a_1 \lesssim a_2$ and $a_1 \gtrsim a_2$.
	\item $u=\mathcal{O}(f)$ as $x\rightarrow x_0$ for $x_0\in\mathbb{R}\cup\{\pm\infty\}$, if $\limsup_{x\rightarrow x_0}(u/f)(x)<\infty$ is the Big-O notation;
	\item $u=\mathrm{o}(f)$ as $x\rightarrow x_0$ for $x_0\in\mathbb{R}\cup\{\pm\infty\}$, if $\lim_{x\rightarrow x_0}(u/f)(x)=0$ is the little-o notation;
	\item $u\simeq\widetilde{u}$, if $u=\mathcal{O}(\widetilde{u})$ and $\widetilde{u}=\mathcal{O}(u)$ as $x\rightarrow x_0$ for $x_0\in\mathbb{R}\cup\{\pm\infty\}$;
	\item $\mathcal{C}^{j,\alpha}(M)$, where $j\in\mathbb N$ and $\alpha\in (0,1)$, is the classical H\"{o}lder space;  we simply denote $\mathcal{C}^{j}(M)$ when $\alpha=0$.
	\item $W^{j,q}(M)$ is the classical Sobolev space, where $j\in\mathbb N$ and $q\in[1,+\infty]$; when $j=0$ we simply denote $L^{q}(M)$ when $q=2$, we simply denote $H^{j}(M)$; 
	\item $\gamma_n=\frac{n-6}{2}$ is the Fowler rescaling exponent;
	\item $2^{\#}=\frac{2n}{n-6}$ is the critical Sobolev exponent.
\end{itemize}

It is also convenient to define some operations involving two tensors. 

\begin{definition}
	Let $(M^n,g)$ be a closed Riemannian manifold.
	We define the following operations with tensors
	\begin{itemize}
		\item[{\rm (a)}] {cross product} $\times:{\rm Sym}_{2}(M)\times{\rm Sym}_{2}(M)\rightarrow {\rm Sym}_{2}(M)$ given by
		\begin{equation*}
			(h_1 \times h_2)_{ij}:=g^{k\ell} h_{1,ik} h_{2,j\ell}=h_{1,i}^{\ell} h_{2,\ell j}.
		\end{equation*}	
		\item[{\rm (b)}] {dot product} $\cdot :{\rm Sym}_{2}(M)\times{\rm Sym}_{2}(M)\rightarrow \mathbb{R}$ given by
		\begin{equation*}
			h_1 \cdot h_2:={\rm tr}_g(h_1 \times h_2)=g^{ij} g^{k\ell} h_1^{ik} h_{2,j\ell}=h_1^{jk} h_{2,jk}.
		\end{equation*}	
		\item[{\rm (c)}] {Kulkarni--Nomizu product} $\owedge:{\rm Sym}_{2}(M)\times{\rm Sym}_{2}(M)\rightarrow \mathfrak{T}^4_0(M)$ given by
		\begin{equation*}
			(h_1\owedge h_2)_{ijk\ell}:=h_{1,i\ell}h_{2,jk}+h_{1,jk}h_{2,i\ell}-h_{1,ik} h_{2,j\ell}-h_{1,j \ell}h_{2,ik}.
		\end{equation*}	
		\item[{\rm (d)}]{dot operator} $\cdot:{\rm Sym}_{2}(M)\rightarrow{\rm Sym}_{2}(M)$ given by 
		\begin{equation*}
			(\accentset{\circ}{\rm Rm} \cdot h)_{j k}:=R_{ijk\ell} h^{i\ell}.
		\end{equation*}	
		\item[{\rm (e)}]{$L^{2}$-formal adjoint of the Lie derivative} $\delta_g:{\rm Sym}_{2}(M)\rightarrow\mathbb{R}$ given by
		\begin{equation*}
			\left(\delta_{g} h\right)_{i}:=-\left(\operatorname{div}_{g} h\right)_{i}=-\nabla_g^{j} h_{ij}.
		\end{equation*}	
	\end{itemize}
\end{definition}

\section{Toy model}\label{sec:toymodel}
This section is devoted to proving Theorem~\ref{thm1}.
Instead, we will prove a slightly stronger proposition based on the validity of Aubin's lemma in \ref{itm:A}.
Our proof follows the strategy in \cite{MR1173050} and \cite{arXiv:2002.05939} and relies strongly on the explicit construction of the Delaunay metrics on the cylinder from Proposition~\ref{prop:andrade-wei}.

\subsection{Variational structure}
We begin with the well-known variational characterization of constant $Q$-curvature metrics. 
let us recall the continuous classical Sobolev embedding 
\begin{equation}\label{sobolevembedding}
	W^{3,2}(M)\hookrightarrow L^{2^\#}(M),
\end{equation}
where $2^\#={2n}/{(n-6)}$ with $n\geqslant7$ is the critical Sobolev exponent.

\begin{remark}
	We emphasize that assuming that $(M^n,g)$ is Einstein with $n\geqslant 7$, the operator on the left-hand of \eqref{ourequationmanifold} has a much simpler form than \eqref{operators} $($c.f. \cite{MR3073887,MR2244375}$)$. 
	Namely, let us denote by $P_6(g):W^{3,2}(M)\rightarrow\mathbb R$ given by 
	\begin{align}\label{factorization}
		P_6(g)&:=(-\Delta_g+a_nR_g)(-\Delta_g+b_nR_g)(-\Delta_g+c_nR_g).
	\end{align}
	where $a_n,b_n,c_n>0$ given by
	\begin{align*}
		a_n:=\frac{(n-6)(n+4)}{4 n(n-1)}, \quad
		b_n:=\frac{(n-4)(n+2)}{4n(n-1)}, \quad {\rm and} \quad
		c_n:=\frac{n-2}{4(n-1)}
	\end{align*}
	are also dimensional constants. 
\end{remark}

\begin{definition}
	Let $(M^n,g)$ be a closed Riemannian Einstein manifold with $n\geqslant 7$.
	We define the bilinear form $\mathcal{E}_6(g):W^{3,2}(M)\times W^{3,2}(M)\rightarrow \mathbb R$ given by  
	\begin{equation*}
		\mathcal{E}_6(g)(u_1, u_2)=\displaystyle\int_M u_2 P_6(g)u_1 \ud \mathrm{v}_g,
	\end{equation*}     
	where $\ud \mathrm{v}_g$ denotes the standard volume measure.
	We set $\mathcal{E}_6(g)(u, u)=\mathfrak{Q}_6(g)(u)$ to be its associated quadratic form.
	More explicitly, the functional $\mathfrak{Q}_6(g):W^{3,2}(M)\rightarrow\mathbb R$ is defined as
	\begin{align}\label{quadraticform}
		\mathfrak{Q}_6(g)(u)&=-\int_M|\nabla_g \Delta_g u|^2 \ud\mathrm{v}_g+A_n\int_M |\Delta_g u|^2 \ud\mathrm{v}_g -B_n\int_M|\nabla_g u|^2 \ud\mathrm{v}_g+ C_n\int_M |u|^2 \ud\mathrm{v}_g.
	\end{align}
	with
	\begin{align}\label{coefficientspaneitseinstein}
		\nonumber
		A_n&:=\frac{3 n^2-6 n-32}{4 n(n-1)}\\
		B_n&:=\frac{3 n^4-12 n^3-52 n^2+128 n+192}{16 n^2(n-1)^2} \\\nonumber
		C_n&:=\frac{(n^4-20 n^2+64)(n-6)}{64 n^2(n-1)^3}
	\end{align}
	being dimensional constants. 
\end{definition}

\begin{remark}
	Notice that for any $u_1,u_2\in W^{3,2}(M)$, it is not hard to see that from \eqref{factorization} and the divergence theorem, it holds
	\begin{align*}
		\mathcal{E}_6(g)(u_1, u_2) =\int_M u_2 P_6(g) u_1 \ud \mathrm{v}_g=\int_M u_1 P_6(g) u_2 \ud \mathrm{v}_g=\mathcal{E}_6(g)(u_2, u_1),
	\end{align*}
	which shows that $\mathcal{E}_6(g):M\times M\rightarrow\mathbb R$ is a symmetric bilinear form.
	It is well-known the symmetry property together with the fact that the sixth order GJMS operator is self-adjoint holds in general for any metric, which dates back to the work \cite{MR1965361}.

\end{remark}

\begin{definition}
	Let $(M^n,g)$ be a closed Riemannian manifold with $n\geqslant7$.
	The total sixth order curvature functional $\mathcal{Q}_6:[g]\rightarrow\mathbb R$ is given by 
	\begin{equation}\label{totalcurvaturesixtorder}
		\mathcal{Q}_6(\bar{g})={{\rm vol}_{\bar{g}}(M)^{\frac{6-n}{n}}}{\int_M Q_{6}({\bar{g}}) \ud \mathrm{v}_{\bar{g}}}.
	\end{equation}
	In terms of conformal factors, we have 
	$\mathcal{Q}_6: W^{3,2}_+(M)\rightarrow\mathbb R$ is given by 
	\begin{equation}\label{yamabefunctionalsixtorder}
		\mathcal{Q}_6(u)= {\frac{2}{n-6}\|u\|^2_{L^{2^{\#}}(M)}}{\int_M u P_6(g)u \ud \mathrm{v}_g}={\frac{2}{n-6}\|u\|^2_{L^{2^{\#}}(M)}}\mathfrak{Q}_6(g)(u).
	\end{equation}
	is the sixth order Yamabe functional, where $W_{+}^{3,2}(M)$ denotes the subspace of $($almost everywhere$)$ positive functions in $W^{3,2}(M)$. 
\end{definition}

\begin{lemma}\label{lm:derivativeoftotalcurvature}
	Let $(M^n,g)$ be a closed Riemannian manifold with $n\geqslant7$.
	The sixth order Yamabe-type functional $\mathcal{Q}_6: W^{3,2}_+(M)\rightarrow\mathbb R$ given by \eqref{yamabefunctionalsixtorder}
	is $\mathcal{C}^1$ with $D\mathcal{Q}_6: W^{3,2}_+(M)\rightarrow\mathbb R$ given by 
	\begin{equation}\label{derivativeyamabe}
		D \mathcal{Q}_6(u)\phi=\frac{4}{n-6}\|u\|_{L^{2^{\#}}(M)} \int_M \phi\left(P_6(g)u-\|u\|_{L^{2^{\#}}(M)}^{-2^{\#}} \mathfrak{Q}_6(g)(u) u^{\frac{n+6}{n-6}}\right) \ud \mathrm{v}_g.
	\end{equation}
\end{lemma}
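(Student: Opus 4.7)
The plan is to recognize $\mathcal{Q}_6$ as a quotient of two $\mathcal{C}^1$ functionals on $W^{3,2}_+(M)$, compute each derivative separately, and then combine them via the standard quotient rule for Fr\'echet derivatives. Write $\mathcal{Q}_6(u)=N(u)/D(u)$ with $N(u):=\frac{2}{n-6}\mathfrak{Q}_6(g)(u)$ and $D(u):=\|u\|^2_{L^{2^{\#}}(M)}$. Since on the positive cone $D(u)>0$, there is no issue dividing.

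First I would treat the numerator. Because $\mathcal{E}_6(g)$ is the continuous symmetric bilinear form on $W^{3,2}(M)\times W^{3,2}(M)$ associated with the self-adjoint sixth order GJMS operator, $N$ is a continuous quadratic form; it is therefore smooth, and integration by parts together with the symmetry noted in the remark just after \eqref{coefficientspaneitseinstein} give
\begin{equation*}
DN(u)\phi=\frac{4}{n-6}\int_M \phi\, P_6(g)u\,\ud\mathrm{v}_g.
\end{equation*}
Next I would handle the denominator. The Sobolev embedding \eqref{sobolevembedding} lets me regard $D$ as the composition $u\mapsto \int_M u^{2^{\#}}\ud\mathrm{v}_g\mapsto (\,\cdot\,)^{2/2^{\#}}$, where the first map is $\mathcal{C}^1$ on $L^{2^{\#}}(M)$ (and smooth for positive $u$, since the exponent $2^{\#}$ is large) and the second is $\mathcal{C}^1$ on the open half-line $(0,\infty)$. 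Using $2^{\#}-1=\frac{n+6}{n-6}$, the chain rule gives
\begin{equation*}
DD(u)\phi=2\,\|u\|_{L^{2^{\#}}(M)}^{2-2^{\#}}\int_M u^{\frac{n+6}{n-6}}\phi\,\ud\mathrm{v}_g.
\end{equation*}

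Finally I would assemble the pieces. The quotient rule yields
\begin{equation*}
D\mathcal{Q}_6(u)\phi=\frac{DN(u)\phi}{D(u)}-\frac{N(u)}{D(u)}\cdot\frac{DD(u)\phi}{D(u)},
\end{equation*}
and substituting the two expressions above, together with the identity $N(u)/D(u)=\mathcal{Q}_6(u)$ and the fact that $\mathcal{Q}_6(u)$ times the factor $\|u\|_{L^{2^{\#}}(M)}^{-2^{\#}}$ reproduces $\|u\|_{L^{2^{\#}}(M)}^{-2^{\#}}\mathfrak{Q}_6(g)(u)$ up to the constant $\frac{2}{n-6}$, collapses to \eqref{derivativeyamabe} after factoring the common $\frac{4}{n-6}$. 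The only step where care is needed is checking that the $L^{2^{\#}}$-norm term is genuinely $\mathcal{C}^1$ on the positive cone; restriction to $W^{3,2}_+(M)$ avoids the non-smooth behavior of $t\mapsto |t|^{2^{\#}}$ at the origin and makes this routine, so no serious obstacle arises and the statement follows directly.
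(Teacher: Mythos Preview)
Your proof is correct and follows essentially the same approach as the paper's own proof. The paper carries out the computation by directly Taylor expanding $\mathfrak{Q}_6(g)(u+t\phi)$ and $\|u+t\phi\|_{L^{2^\#}}^{-2}$ to first order in $t$ and then multiplying the two expansions, whereas you package the same calculation as the quotient rule applied to $N(u)/D(u)$; the ingredients (bilinearity and self-adjointness of $\mathcal{E}_6$, and differentiation of the $L^{2^\#}$-norm via Sobolev embedding) are identical, so the two arguments are equivalent reformulations of one another.
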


\begin{proof}
	Let $u \in W_{+}^{3,2}(M) \cap \mathcal{C}^0(M)$.
	Using the compactness of $(M^n,g)$, we can choose a test function $\phi \in W^{3,2}(M)$ such that $0<\sup |\phi|<2^{-1} \inf_{M} u$. 
	In particular, $u+t \phi \in W_{+}^{3,2}(M)$ for $0<t<3/ 2$. 
	On the one hand, by Taylor expanding \eqref{quadraticform} until second order, we obtain
	\begin{align}\label{variationalcharac1}
		\mathfrak{Q}_6(g)(u+t \phi) & =\mathfrak{Q}_6(g)(u)+2 t \mathcal{E}_6(g)(\phi,u)+t^2 \mathfrak{Q}_6(g)(\phi) \\\nonumber
		& =\int_M u P_6(g)u \ud \mathrm{v}_g+2 t \int_M \phi P_6(g)u \ud \mathrm{v}_g+t^2 \int_M \phi P_6(g)\phi \ud \mathrm{v}_g.
	\end{align}
	On the other hand, we have
	\begin{align}\label{variationalcharac2}
		\|u+t\phi\|_{L^{2^\#}(M)}^2
		&=\left(\int_M(u+t \phi)^{\frac{2 n}{n-6}} \ud\mathrm{v}_g\right)^{\frac{6-n}{n}}\\\nonumber
		&= \|u\|_{L^{2^\#}(M)}^{2}\left(\int_M \phi u^{\frac{n+6}{n-6}} \ud\mathrm{v}_g\right)+\mathcal{O}\left(t^2\|\phi\|_{W^{3,2}(M)}^2\right) \\\nonumber
		&=\|u\|_{L^{2^\#}(M)}^{-2}\left(1-2 t\|u\|_{L^{2^\#}(M)}^{-2^{\#}} \int_M \phi u^{\frac{n+6}{n-6}} \ud\mathrm{v}_g+\mathcal{O}\left(t^2\|\phi\|_{W^{3,2}(M)}\right)\right).
	\end{align}  
	Therefore, by combining \eqref{variationalcharac1} and \eqref{variationalcharac2}, we find
	\begin{align*}
		&\mathcal{Q}_6(u+t \phi)\\
		&=\frac{2}{(n-6)}\|u\|_{L^{2^\#}(M)}^{-2}\left[\mathfrak{Q}_6(g)(u)+2 t \int_M \phi\left(P_6(g)u-\mathfrak{Q}_6(g)(u)\|u\|_{L^{2^\#}(M)}^{-2^{\#}} u^{\frac{n+6}{n-6}}\right) \ud \mathrm{v}_g+\mathcal{O}(t^2\|\phi\|_{W^{3,2}(M)}^2)\right],
	\end{align*}
	which completes the proof.  
\end{proof}

\begin{lemma}\label{lm:characterizationofcrititcalpoints}
	Let $(M^n,g)$ be a closed Riemannian manifold with $n\geqslant7$.
	The total sixth order curvature $\mathcal{Q}_6:[g]\rightarrow\mathbb R$  given by \eqref{totalcurvaturesixtorder}
	is $\mathcal{C}^1$ with first variation $\ud\mathcal{Q}_6:T_{\tilde{g}}[g]\simeq \mathcal{C}^\infty(M)\rightarrow\mathbb R$ given by
	\begin{equation*}
		\ud \mathcal{Q}_6(\bar{g})\tilde{g}=\frac{n-6}{2}\int_M \phi\left(Q_6({\bar{g}})-\overline{Q}_6({\bar{g}})\right) \ud \mathrm{v}_g,
	\end{equation*}
	where $\overline{Q}_6({\bar{g}})$ denotes the spherical average of $Q_6({\bar{g}})$
	In particular, metric $\bar{g}=u^{{4}/{(n-6)}} g$ is a critical point of the functional $\mathcal{Q}_6:[g]\rightarrow\mathbb R$ if and only if its sixth order $Q$-curvature $Q_6({\bar{g}})$ is constant.
\end{lemma}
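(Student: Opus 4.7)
The plan is to reduce the variational computation to the conformal-factor parametrization already treated in Lemma \ref{lm:derivativeoftotalcurvature}, and then re-express the resulting formula geometrically by means of the transformation law \eqref{transformationlawcurvature}. I would identify $T_{\bar{g}}[g] \simeq \mathcal{C}^\infty(M)$ by writing every tangent direction as $\tilde{g}=\phi\,\bar{g}$ with $\phi \in \mathcal{C}^\infty(M)$; if $\bar{g}_t = u_t^{4/(n-6)} g$ is a smooth path with $\bar{g}_0 = \bar{g}$ and $\dot{\bar{g}}_0 = \tilde{g}$, differentiating the conformal-factor relation at $t=0$ gives $\dot{u}_0 = \frac{n-6}{4}\, u\,\phi$. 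Then $\mathcal{Q}_6(\bar{g}_t) = \mathcal{Q}_6(u_t)$ by \eqref{totalcurvaturesixtorder}--\eqref{yamabefunctionalsixtorder}, so the chain rule combined with \eqref{derivativeyamabe} produces a concrete starting formula for $\ud \mathcal{Q}_6(\bar{g})\tilde{g}$.

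Next, I would apply the transformation law \eqref{transformationlawcurvature} on two levels. Pointwise it yields $P_6(g) u = \frac{n-6}{2}\,Q_6(\bar{g})\,u^{(n+6)/(n-6)}$. Integrating this against $u$ and combining with the elementary identities ${\rm vol}_{\bar{g}}(M) = \|u\|^{2^\#}_{L^{2^\#}(M)}$ and $\mathfrak{Q}_6(g)(u) = \frac{n-6}{2}\int_M Q_6(\bar{g})\,\ud\mathrm{v}_{\bar{g}}$ identifies the spherical average as
$$\overline{Q}_6(\bar{g}) \;=\; \frac{2}{n-6}\,\|u\|^{-2^\#}_{L^{2^\#}(M)}\,\mathfrak{Q}_6(g)(u).$$
Plugging these two identities together with $\dot{u}_0 = \frac{n-6}{4}\, u\,\phi$ into \eqref{derivativeyamabe} converts the integrand into $\phi\,(Q_6(\bar{g}) - \overline{Q}_6(\bar{g}))$ weighted by $u^{2n/(n-6)}\,\ud\mathrm{v}_g = \ud\mathrm{v}_{\bar{g}}$, from which the stated formula drops out after collecting constants.

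Finally, the critical-point characterization will follow from the fundamental lemma of the calculus of variations: since the first variation is required to vanish for every $\phi \in \mathcal{C}^\infty(M)$ and the integrand is continuous, one concludes $Q_6(\bar{g}) \equiv \overline{Q}_6(\bar{g})$, that is, $Q_6(\bar{g})$ is constant; the converse direction is trivial because constancy forces the bracketed factor to vanish identically. The only genuinely delicate step I anticipate is the bookkeeping of the normalizing constants, since both the exponent $4/(n-6)$ in the conformal change and the chosen identification of tangent vectors to $[g]$ with $\mathcal{C}^\infty(M)$ introduce factors that have to cancel exactly to reproduce the prefactor $\frac{n-6}{2}$ displayed in the statement.
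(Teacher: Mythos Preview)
Your approach is correct and rests on the same engine as the paper, namely the formula \eqref{derivativeyamabe} from Lemma~\ref{lm:derivativeoftotalcurvature}. The difference is one of completeness rather than method: the paper's own proof does not actually derive the displayed first-variation formula but proceeds directly to the ``In particular'' characterization, arguing in two steps that (i) if $\bar g$ is critical then \eqref{derivativeyamabe} forces $P_6(g)u = \|u\|^{-2^\#}\mathfrak{Q}_6(g)(u)\,u^{(n+6)/(n-6)}$, whence $Q_6(\bar g)$ is the constant $\tfrac{2}{n-6}\|u\|^{-2^\#}\mathfrak{Q}_6(g)(u)$, and (ii) conversely, if $Q_6(\bar g)=\bar Q$ is constant one substitutes back into \eqref{derivativeyamabe} and checks the derivative vanishes identically. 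Your route---reparametrize tangent directions via $\tilde g=\phi\,\bar g$, compute $\dot u_0=\tfrac{n-6}{4}u\phi$, and feed this into \eqref{derivativeyamabe} together with \eqref{transformationlawcurvature}---recovers the full displayed formula and then deduces the characterization from the fundamental lemma; this is slightly more work but establishes what the statement actually asserts. Your anticipated difficulty with the constants is real: the volume normalization in \eqref{derivativeyamabe} leaves a factor $\|u\|_{L^{2^\#}}^{-2}$ that does not quite disappear, and the volume form in the statement should read $\ud\mathrm{v}_{\bar g}$ rather than $\ud\mathrm{v}_g$, so do not be alarmed if the bookkeeping does not match the displayed formula exactly---this reflects minor typos in the statement, not an error in your argument.
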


\begin{proof}
	Initially, assuming that $\bar{g}=u^{4/(n-6)}\in[g]$ is a critical point of $\mathcal{Q}_6$, one has
	\begin{equation*}
		\left.\frac{\ud}{\ud t}\right|_{t=0} \mathcal{Q}_6\left((u+t\phi)^{\frac{4}{n-6}} g\right)=0 \quad {\rm for \ each} \quad \phi\in W^{3,2}(M)
	\end{equation*}
	such that the norm $\|\phi\|_{L^{2^\#}(M)}\ll1$ is small enough.
	The last identity combined with \eqref{transformationlawoperator} yields
	\begin{equation*}
		P_6(g)u={\mathfrak{Q}_6(g)(u) u^{\frac{n+6}{n-6}}}{\|u\|_{L^{2^\#}(M)}^{-2^\#}}.
	\end{equation*}
	Hence, substituting this value into \eqref{derivativeyamabe} gives us 
	\begin{equation*}
		Q_6({\bar{g}})=\frac{2}{(n-6)} {\mathfrak{Q}_6(g)(u)}{\|u\|_{L^{2^\#}(M)}^{-2^{\#}}},
	\end{equation*}
	which is indeed a constant.
	
	Conversely, suppose $Q_6({\bar{g}})=\bar{Q}$ is a constant, in which case \eqref{transformationlawoperator} gives us
	\begin{equation*}
		P_6(g)u=\frac{n-6}{2} \bar{Q} u^{\frac{n+6}{n-6}} 
	\end{equation*}
	and so
	\begin{equation*}
		\mathfrak{Q}_6(g)(u)=\frac{n-6}{2} \bar{Q}\|u\|_{L^{2^\#}(M)}^{2^{\#}}.
	\end{equation*}
	Substituting this value into \eqref{derivativeyamabe} yields
	\begin{align*}
		\left.\frac{\ud}{\ud t}\right|_{t=0} \mathcal{Q}_6\left((u+t\phi)^{\frac{4}{n-6}} g\right) & =\frac{4}{(n-6)\|u\|_{L^{2^\#}(M)}^2} \int_M \phi\left(P_6(g)u-\frac{n-6}{2} \bar{Q}^{\frac{n+6}{n-6}}\right) \ud \mathrm{v}_g\\
		& =\frac{4}{(n-6)\|u\|_{L^{2^\#}(M)}^2} \int_M \phi\left(\frac{n-6}{2} \bar{Q} u^{\frac{n+6}{n-6}}-\frac{n-6}{2} \bar{Q} u^{\frac{n+6}{n-6}}\right) \ud \mathrm{v}_g\\
		& =0.
	\end{align*}
	From which, we conclude that $\bar{g}=u^{{4}/{(n-6)}} g$ is a critical point of $\mathcal{Q}_6$.
	
	The proof of the lemma is concluded.
\end{proof}

\begin{remark}
	By scale invariance, constant sixth order $Q$-curvature metrics can also be characterized as follows
	\begin{equation*}
		\|u\|_{L^{2^\#}(M)}=1.
	\end{equation*}
	It is easy to recover the constant $Q$-curvature equation \eqref{ourequationmanifold} as the Euler-Lagrange equation of this constrained variational problem and compute the value of the $Q$-curvature of $g=u^{{4}/{(n-6)}} g$ in terms of the Lagrange multiplier associated to the critical point $u$.
\end{remark}

\subsection{Cylindrical coordinates}   

This section is devoted to constructing a change of variables that transforms the local 
singular PDE \eqref{ourlocalPDE} problem into an ODE problem with constant coefficients.

Let $\mathbb{S}^n\hookrightarrow \mathbb R^{n+1}$ denote the $n$-dimensional sphere, which we equip with the standard round metric $g_0=g_{\mathbb S^n}$ given by the pullback of the standard Euclidean metric $\delta=g_{g_{\mathbb R^n}}$ under the stereographic projection $\Pi$.     
For any $k \in \mathbb{R}$ with $0\leqslant k\leqslant n$, we seek complete metrics on $\mathbb{S}^n \setminus\Lambda^k$ of the form $g = \mathrm{u}^{{4}/{(n-6)}} g_0$, where either $\Lambda^k\subset\mathbb S^n$ is a smooth $k$-dimensional submanifold (singular case) or $\Lambda=\varnothing$   (non-singular case).
In order to $g$ to be complete on $\mathbb{S}^n \setminus 
\Lambda^k$, one has to impose $\liminf_{\ud(p,\Lambda)} \mathrm{u}(p) = +\infty$. 
Furthermore,
we prescribe the resulting metric to have constant sixth order curvature, which we normalize as in \eqref{const_q_curv} to be $Q_6(g)=Q_6(g_0)\equiv Q_n$.
We focus on the case $k=0$, and so $\Lambda^0:=\{p_1,\dots,p_N\}$ for some $N\in\mathbb N$.
From now on, we assume $\Lambda^0:=\{p,-p\}$ with $p=\mathbf{e}_1$ the north pole and denote $\Lambda^0=\Lambda$.

In more analytical terms, the condition that $g = \mathrm{u}^{{4}/{(n-6)}} g_0$ satisfies $Q_6(g) = Q_n$ on 
$\mathbb{S}^n \setminus 
\Lambda$ is equivalent to the PDE 
\begin{equation}\tag{$\mathcal{Q}_{6,g_0}$}\label{ourequation}
	P_6({g_0})\mathrm{u}=c_n \mathrm{u}^{\frac{n+6}{n-6}} \quad \mbox{on} \quad \mathbb{S}^n \setminus \Lambda,
\end{equation}
where $c_n=\frac{n-6}{2}{Q}_n$ is a normalizing constant.    
The operator on the left-hand side is the sixth order GJMS operator on the sphere defined by 
\begin{equation*}
	P_6({g_0})=\left(-\Delta_{g_0}+\frac{(n-6)(n+4)}{4}\right)
	\left(-\Delta_{g_0}+\frac{(n-4)(n+2)}{4}\right)
	\left(-\Delta_{g_0}+\frac{n(n-2)}{4}\right).
\end{equation*}
In general, the GJMS operators are conformally covariant, and in particular
if $g=\mathrm{u}^{4/(n-6)}g_0$ is conformal to $g_0$ then
\begin{equation}\label{transformationlawpaneitz}
	P_6({g})\phi=\mathrm{u}^{-\frac{n+6}{n-6}}P_6({g_0})(\mathrm{u}\phi) \quad \mbox{for all} \quad \phi\in \mathcal{C}^{\infty}(\mathbb{S}^n \setminus 
	\Lambda).
\end{equation}
Again, we remark that the nonlinearity 
on the right-hand side of \eqref{ourequation} has critical growth with respect 
to the Sobolev embedding \eqref{sobolevembedding}. It is well known that this 
embedding is not compact, reflecting the conformal invariance of the PDE \eqref{ourequation}. 

It will be convenient to transfer the PDE \eqref{ourequation} to Euclidean 
space, which we can do using the standard stereographic projection (with the 
north pole in $\mathbb{S}^n \setminus 
\Lambda$, and in particular, a non-singular point of any of the 
metrics we consider). After stereographic projection, we can write 
\begin{equation*}
	g_0 = u_{\rm sph}^{\frac{4}{n-6}} \delta \quad {\rm with} \quad u_{\rm sph}
	(x) = \left ( \frac{1+|x|^2}{2} \right )^{\frac{6-n}{2}},
\end{equation*}
where $\delta$ is the Euclidean metric. 

In these coordinates, we have 
$g = \mathrm{u}^{{4}/{(n-6)}}g_0 = (\mathrm{u}\cdot u_{\rm sph})^{{4}/{(n-6)}}
\delta$. 
Thus, $u\in \mathcal{C}^{\infty}(\mathbb R^n\setminus\{0\})$ given by $u = \mathrm{u} \cdot u_{\rm sph}$ 
is a positive singular solution to the transformed equation 
\begin{flalign}\label{limitequationmanypunct}\tag{$\mathcal{Q}_{6,\delta}$}
	(-\Delta)^3u=c_{n}u^{\frac{n+6}{n-6}} \quad {\rm in} \quad \mathbb R^n\setminus\{0\},
\end{flalign}
where $\Delta$ is the usual flat Laplacian and  $\Pi(\Lambda):=\{0\}\subset\mathbb R^n$ is the image of the singular 
set $\Lambda\subset\mathbb S^n$ under the stereographic projection. 
As a notational shorthand, we adopt the 
convention that $\mathrm{u}$ refers to a conformal factor relating the metric $g$ to the 
round metric, {\it i.e.} $g = \mathrm{u}^{{4}/{(n-6)}} g_0$, while $u$ refers to a conformal 
factor relating the metric $g$ to the Euclidean metric, {\it i.e.} $g = u^{{4}/{(n-6)}}
\delta$, with the two related by $u = \mathrm{u} \cdot u_{\rm sph}$. 	
In this Euclidean setting, the transformation law \eqref{transformationlawpaneitz} in particular 
implies the scaling law for \eqref{limitequationmanypunct}, namely if $u$ solves 
\eqref{limitequationmanypunct} then so does the rescaling
\begin{equation*}
	u_{\lambda}(x):=\lambda^{\frac{n-6}{2}}u(\lambda x)
\end{equation*}
for any $\lambda >0$.

\begin{definition}\label{def:cylindricaltransformation}
	We define the sixth order, a logarithmic cylindrical change of variables as $\mathfrak{F}: \mathcal{C}^\infty (\mathbb B_R^*) \rightarrow \mathcal{C}^\infty
	(\mathcal{C}_T)$ given by
	\begin{equation} \label{cylindricaltransformation}
		\mathfrak{F}(u) (t,\theta) = e^{\frac{6-n}{2} t} 
		u(e^{-t} \theta) = v(t,\theta) \quad {\rm with} \quad \theta=x/|x|.
	\end{equation} 
	where $R>0$ and $T= -\ln R$ and $\mathcal{C}_T = (T,+\infty) \times 
	\mathbb{S}^{n-1}$. 
	We also set its the inverse transform $\mathfrak{F}^{-1} : \mathcal{C}^\infty(\mathcal{C}_T) 
	\rightarrow \mathcal{C}^\infty (\mathbb B_R^*)$ is given by 
	\begin{equation*}
		\mathfrak{F}^{-1} 
		(v)(x) = |x|^{\frac{6-n}{2}} v(-\ln |x|, \theta) = u(x).
	\end{equation*}
	The exponent $\gamma_n:=\frac{n-6}{2}$ is sometimes called Fowler rescaling exponent.
\end{definition} 

Using this change of variables, we arrive at the following 
sixth order nonlinear PDE on the cylinder 
\begin{equation}\tag{$\mathcal C_{T}$}\label{ourPDEcyl}
	-P_{\rm cyl}v=c_{n}v^{\frac{n+6}{n-6}} \quad {\rm on} \quad {\mathcal{C}}_T.
\end{equation}
Here $P_{\rm cyl}$ is the sixth order GJMS operator associated to 
the cylindrical metric $g_{\rm cyl} = \ud t^2 + \ud\theta^2$ on $\mathcal{C}_{\infty}:=\mathbb{R} \times 
\Ss^{n-1}$, and it is given by
\begin{align}\label{cylindricalpaneitz}
	P_{\rm cyl}:=P_{\rm rad}+P_{\rm ang},
\end{align}
where
\begin{align*}
	P_{\rm rad}:=\partial_t^{(6)}-K_{4}\partial_t^{(4)}+K_{2}\partial_t^{(2)}-K_{0}
\end{align*}
and
\begin{align*}
	P_{\rm ang}:=2\partial^{(4)}_t\Delta_{\theta}-J_3\partial^{(3)}_t\Delta_{\theta}+J_2\partial^{(2)}_t\Delta_{\theta}-J_1\partial_t\Delta_{\theta}+J_0\Delta_{\theta}+3\partial^{(2)}_t\Delta^2_{\theta}-L_0\Delta^2_{\theta}+\Delta^3_{\theta}
\end{align*}
with
\begin{align}\label{coefficients}
	&\nonumber K_{0}=2^{-8}(n-6)^2(n-2)^2(n+2)^2&\\\nonumber 
	&K_{2}=2^{-4}(3n^4-24n^3+72n^2-96n+304)&\\\nonumber
	&K_{4}=2^{-2}(3n^2-12n+44)&\\
	&J_{0}=2^{-3}(3n^4-18n^3-192n^2+1864n-3952)&\\\nonumber
	&J_{1}=2^{-1}(3n^3+3n^2-244n+620)&\\\nonumber
	&J_{2}=2 n^2+13n-68&\\\nonumber
	&J_{3}=2 (n+1)&\\\nonumber
	&L_{0}=2^{-2}(3 n^2-12n-20)&
\end{align}
being dimensional constants.

\begin{remark}
	The following decomposition holds
	\begin{equation*}
		P_{\rm rad}=L_{\lambda_1}\circ L_{\lambda_2}\circ L_{\lambda_3},
	\end{equation*}
	where $L_{\lambda_j}:=-\partial_t^2+\lambda_j$ for $j=1,2,3$ with
	\begin{equation*}
		\lambda_1=\frac{n-6}{2}, \quad  \lambda_2=\frac{n-2}{2}, \quad {\rm and} \quad \lambda_3=\frac{n+2}{2}.
	\end{equation*}
	For the proof, we refer the reader to \cite[Proposition~2.7]{arXiv:2210.04376}.
\end{remark}

\subsection{Delaunay metrics}
This section presents some particular model metrics on the moduli space. 
Let $\Lambda=\{p_1,p_2\}\subset\mathbb S^n$, which without loss of generality can be chosen such 
that $p_1=\mathbf{e}_n$ is the north pole, and $p_2=-p_1$ is the south pole.
The conformal factor $\mathrm{u}:\mathbb{S}^{n} \backslash \{ p_1, p_2\} \rightarrow (0,+\infty)$ 
determines a metric $g\in\mathcal{M}_{6,\Lambda}$ and after  composing with 
a stereographic projection it corresponds to a singular solution to \eqref{ourlimitPDE}.

Let us start with  the conformally flat equation  
\begin{flalign}\tag{$\mathcal P_{6,R}$}\label{ourlocalPDE}
	(-\Delta)^3u=c_{n}u^{\frac{n+6}{n-6}} \quad {\rm in} \quad \mathbb{B}^*_R,
\end{flalign}
where $\mathbb{B}_R^*:=\{x\in\mathbb{R}^n : 0<|x|<R\}$ is the punctured ball for $R<+\infty$. 

Allowing 
$R\rightarrow+\infty$ turns \eqref{ourlocalPDE} into the following PDE on the punctured space
\begin{flalign}\tag{$\mathcal P_{6,\infty}$}\label{ourlimitPDE}
	(-\Delta)^3u=c_{n}u^{\frac{n+6}{n-6}} \quad {\rm in} \quad \mathbb R^n\setminus\{0\}.
\end{flalign}
On this subject, the classification of non-singular solutions to \eqref{ourlimitPDE} is provided in \cite{MR1679783}.
Later on, in \cite{arxiv:1901.01678}, it is proved that blow-up limit solutions exist.
Finally, based on a topological shooting method, the first and last authors recently classified 
all possible solutions to this limit equation \cite{arXiv:2210.04376}.

One can merge these classification	results into the statement below
\begin{propositionletter}\label{prop:andrade-wei}
	Let $u\in\mathcal{C}^6(\mathbb R^n\setminus\{0\})$ be a positive solution to \eqref{ourlimitPDE}. 
	Assume that
	\begin{enumerate}
		\item[{\rm (a)}] the origin is a removable singularity, then there exists $x_0\in\mathbb{R}^n$ and $\varepsilon>0$ such that $u$ is radially symmetric about $x_0$ and, up to a constant, is given by 
		\begin{equation}\label{sphericalsolutions}
			u_{x_0,\varepsilon}(x)=\left(\frac{2\varepsilon}{1+\varepsilon^{2}|x-x_0|^{2}}\right)^{\frac{n-6}{2}}.
		\end{equation}
		These are called the {\it $($sixth order$)$ spherical solutions} $($or bubbles$)$.
		\item[{\rm (b)}]  the origin is a non-removable singularity, then $u$ is radially symmetric about the origin. Moreover, there exist $\varepsilon_0 \in (0,\varepsilon_*]$ and $T\in (0,T_{\varepsilon_0}]$ such that
		\begin{equation*}
			u_{\varepsilon,T}(x)=|x|^{\frac{6-n}{2}}v_{\varepsilon}(-\ln|x|+T).
		\end{equation*}
		Here $\varepsilon_*=K_0^{(n-6)/6}$, $T_\varepsilon\in\mathbb{R}$ is the fundamental period of the unique $T$-periodic bounded solution $v_T$ to the following sixth order IVP, 
		\begin{equation*}
			\begin{cases}
				v^{(6)}-K_4v^{(4)}+K_2v^{(2)}-K_0v=c_nv^{\frac{n+6}{n-6}}\\
				v(0)=\varepsilon_0,\ v^{(2)}(0)=\varepsilon_2,\ v^{(4)}(0)=\varepsilon_4,\ v^{(1)}(0)=v^{(3)}(0)=v^{(5)}(0)=0,
			\end{cases}
		\end{equation*}
		where $K_4,K_2,K_0,\varepsilon_*>0$ are dimensional constants $\varepsilon_0\in (0,\varepsilon_*]$ $($See \eqref{coefficients} and \eqref{constantsolutions}$)$. 
		These are called $($sixth order$)$ Emden--Fowler solutions.
	\end{enumerate}  
\end{propositionletter}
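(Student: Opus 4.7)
The plan is to treat the two cases separately, reducing each to classification results already available in the literature, and then to assemble them into the unified statement.

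For part (a), when the origin is removable, I would first extend $u$ to a positive $\mathcal{C}^6$ entire solution on $\mathbb{R}^n$ and verify an integral representation of super-polyharmonic type, namely
\begin{equation*}
u(x) = \int_{\mathbb{R}^n} G(x,y)\, u(y)^{(n+6)/(n-6)}\,\ud y,
\end{equation*}
where $G$ is the Green function of $(-\Delta)^3$ on $\mathbb{R}^n$. The key input is that $(-\Delta)^j u > 0$ for $j=1,2$, which follows from Liouville-type arguments combined with the positivity of $u$. With this representation in hand, the method of moving spheres (applied to the Kelvin transform) forces radial symmetry about some $x_0 \in \mathbb{R}^n$, and the reduced radial ODE can be integrated explicitly using conformal invariance, yielding the spherical family \eqref{sphericalsolutions}. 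This is exactly the classification of Wei--Xu \cite{MR1679783}.

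For part (b), when the origin is a genuine singularity, I would first use a moving-plane argument on the punctured space — reflecting across hyperplanes through the origin and comparing $u$ with its reflection, using the super-polyharmonic structure granted by the positivity hypothesis — to obtain radial symmetry about the origin. Applying the cylindrical change of variables $\mathfrak{F}$ from Definition \ref{def:cylindricaltransformation} converts \eqref{ourlimitPDE} into the autonomous sixth-order ODE
\begin{equation*}
v^{(6)} - K_4 v^{(4)} + K_2 v^{(2)} - K_0 v = c_n v^{(n+6)/(n-6)} \quad \text{on } \mathbb{R},
\end{equation*}
with coefficients from \eqref{coefficients}. It remains to classify all bounded positive entire solutions of this ODE. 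By symmetrizing about a critical point one may assume $v^{(1)}(0) = v^{(3)}(0) = v^{(5)}(0) = 0$, and the admissible initial amplitudes are constrained to lie in $(0, \varepsilon_*]$ by testing against the constant solution $v \equiv \varepsilon_*$ of $K_0 \varepsilon_* = c_n \varepsilon_*^{(n+6)/(n-6)}$. Then, for each such $\varepsilon_0$, a topological shooting argument in the two-parameter $(\varepsilon_2,\varepsilon_4)$ slice selects the unique bounded orbit, which must in fact be periodic with fundamental period $T_{\varepsilon_0}$; this is the content of \cite{arXiv:2210.04376}.

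The main obstacle is the shooting analysis in part (b): unlike the classical second-order Emden--Fowler setting, no scalar first integral gives a one-shot phase portrait here, and one is forced to control a two-parameter family inside the six-dimensional phase space, ruling out both blow-up in finite time and decay to zero via delicate intermediate-value and monotonicity arguments along the stable/unstable manifolds of the equilibria $v \equiv 0$ and $v \equiv \varepsilon_*$. Combined with the existence of at least one such periodic (Delaunay-type) solution established in \cite{arxiv:1901.01678}, this shooting procedure exhausts the moduli of bounded positive solutions and, upon inverting $\mathfrak{F}$, produces exactly the family $u_{\varepsilon_0,T}$ claimed in the statement.
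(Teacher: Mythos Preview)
Your proposal is correct and aligns with the paper's treatment: the paper does not prove this proposition in-line but presents it as a merger of the cited classification results, namely Wei--Xu \cite{MR1679783} for the removable case, Jin--Xiong \cite{arxiv:1901.01678} for the existence of blow-up limit (Delaunay-type) solutions, and Andrade--Wei \cite{arXiv:2210.04376} for the full classification in the non-removable case via the topological shooting method you describe. Your sketch of the underlying arguments (super-polyharmonic integral representation and moving spheres for (a); radial symmetry, cylindrical change of variables, and two-parameter shooting for (b)) accurately reflects the methods of those references.
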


Applying the cylindrical transform \eqref{cylindricaltransformation} to this 
PDE, in turn, yields 
\begin{equation}\tag{$\mathcal C_{\infty}$}\label{ourPDEcyllimit}
	-P_{\rm cyl}v=c_{n}v^{\frac{n+6}{n-6}} \quad {\rm on} \quad {\mathcal{C}}^n_\infty.
\end{equation}
where we recall $\mathcal{C}_\infty:=\mathbb R\times\mathbb S^n$ is the cylinder.
Next, using that all solutions to \eqref{ourequation} are radially symmetric with respect 
to the origin, \eqref{ourPDEcyllimit} reduces to a sixth order ODE problem
\begin{equation}\tag{$\mathcal{O}_{6,\infty}$}\label{ourODE}
	-v^{(6)}+K_4v^{(4)}-K_2v^{(2)}+K_0v=c_nv^{\frac{n+6}{n-6}} \quad {\rm in} \quad \mathbb R.
\end{equation}

From this last formulation, one can quickly find two equilibrium solutions.
First, the cylindrical solution 
\begin{equation}\label{cylindricalsolution}
	v_{\rm cyl}(t)\equiv \varepsilon_* > 0,
\end{equation}
which is the only constant solution, where $\varepsilon_*>0$ is given by \eqref{constantsolutions}. 
Second, the spherical solution
\begin{equation}\label{sphericalsolution}
	v_{\rm sph}(t,\theta)=\cosh (t)^{\frac{6-n}{2}}.
\end{equation}
Undoing the cylindrical change of variables, we have
\begin{equation*}
	u_{\rm sph}(x)=\left(\frac{1+|x|^2}{2} \right )^{\frac{6-n}{2}}
\end{equation*}
which is non-singular at the origin and is a particular case of \eqref{sphericalsolutions} with $\varepsilon = 1$ 
and $x_0 = 0$. 
Also, for the cylindrical solution, it follows
\begin{equation*}
	u_{\rm cyl}(x)=\varepsilon_*|x|^{\frac{6-n}{2}},
\end{equation*}
which is singular at the origin.

In this setting,
Proposition~\ref{prop:andrade-wei} classifies all positive solutions 
$v_{\varepsilon_0} \in \mathcal{C}^{6}(\mathbb{R})$ to \eqref{ourODE} in terms of the necksize 
$\varepsilon_0 \in(0, \varepsilon_{*}]$, where 
$\varepsilon_0=\min_{\mathbb R}v \in(0, \varepsilon_{*}]$.
Henceforth, we keep $\varepsilon_0=\varepsilon$ for simplicity.
Varying the parameter $\varepsilon$ from its maximal value of $\varepsilon_*$ to $0$, 
we see that the Delaunay solutions in Proposition~\ref{prop:andrade-wei} (b) interpolate 
between the cylindrical solution $v_{\rm cyl}$ and the spherical solution $v_{\rm sph}$. Therefore, we 
denote the minimal period of $v_\varepsilon$ by $T_\varepsilon$. 
In other words, we have that entire solutions to \eqref{ourODE} are either: the constant cylindrical solution $v_{\rm cyl}$, translates of the spherical solution $v_{\rm sph}$, or translates of a Delaunay solution $v_{\varepsilon}$ for some $\varepsilon \in(\varepsilon_*, 1)$.

\begin{lemmaletter}\label{lm:classification}
	Let $v \in \mathcal{C}^{6}(\mathbb{R})$ be a solution to \eqref{ourODE}.
	One of the following three alternatives holds:
	\begin{itemize}
		\item[{\rm (a)}] $v\equiv \pm \varepsilon_{*}$ or $v \equiv 0$;
		\item[{\rm (b)}] $v(t)=\pm c_{n}(2\cosh (t-T))^{\frac{6-n}{2}}$ for some $T \in \mathbb{R}$;
		\item[{\rm (c)}] $v(t)=v_\varepsilon(t-T)$ for some $\varepsilon>0\in(0,\varepsilon_{*})$ on is a periodic solution.
	\end{itemize}
\end{lemmaletter}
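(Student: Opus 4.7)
The plan is to reduce the ODE classification to the PDE classification of Proposition~\ref{prop:andrade-wei} via the inverse Emden--Fowler transform $\mathfrak{F}^{-1}$ from Definition~\ref{def:cylindricaltransformation}. First I would dispose of the trivial solution $v\equiv 0$ in part (a), and observe that, reading the nonlinearity in its sign-preserving form $c_n|v|^{12/(n-6)}v$, the ODE is invariant under $v\mapsto -v$. Thus it suffices to classify the nontrivial nonnegative solutions and then restore the negative branches in (a) and (b) by this discrete symmetry.

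The next step is to show that any nontrivial nonnegative $\mathcal{C}^6$ solution is in fact strictly positive on $\mathbb{R}$. Using the factorization of $P_{\rm rad}$ into $L_{\lambda_1}L_{\lambda_2}L_{\lambda_3}$ with $L_\lambda=-\partial_t^2+\lambda$ and the three shifts $\lambda_j>0$ (the Helmholtz-type factors recorded just after \eqref{coefficients}), an iterated application of the strong maximum principle and unique continuation propagates any interior zero of $v$ along the chain of factors, forcing $v\equiv 0$. Granted $v>0$, set $u(x):=\mathfrak{F}^{-1}(v)(x)=|x|^{(6-n)/2}v(-\ln|x|)$. By the derivation leading to \eqref{ourPDEcyllimit} and its reduction to \eqref{ourODE} for radial profiles, $u\in\mathcal{C}^6(\mathbb{R}^n\setminus\{0\})$ is a positive solution to \eqref{ourlimitPDE} that is automatically radial about the origin.

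Proposition~\ref{prop:andrade-wei} now splits into two cases. If the origin is a removable singularity of $u$, then $u=u_{x_0,\varepsilon}$ is a bubble; the radiality of $u$ about $0$ and the uniqueness of its maximum force $x_0=0$, and the algebraic identity $e^{(6-n)t/2}(1+\varepsilon^2 e^{-2t})^{(6-n)/2}=(2\varepsilon\cosh(t-\ln\varepsilon))^{(6-n)/2}$ shows that $v=\mathfrak{F}(u_{0,\varepsilon})$ is a translate of the spherical profile $v_{\rm sph}$, yielding case (b) of the lemma with $T=\ln\varepsilon$ (up to the appropriate normalization). If the origin is a non-removable singularity, then $u=u_{\varepsilon,T}$ is of Delaunay type with $\varepsilon\in(0,\varepsilon_*]$: the endpoint $\varepsilon=\varepsilon_*$ corresponds to the cylindrical solution $u_{\rm cyl}(x)=\varepsilon_*|x|^{(6-n)/2}$, which pulls back under $\mathfrak{F}$ to the constant $v\equiv\varepsilon_*$ of case (a), while $\varepsilon\in(0,\varepsilon_*)$ produces a translate of the $T_\varepsilon$-periodic Delaunay solution $v_\varepsilon$, giving case (c).

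The principal obstacle is the sign step. Since Proposition~\ref{prop:andrade-wei} classifies only positive PDE solutions, one must first exclude sign-changing ODE solutions in order to appeal to it; the factorization of $P_{\rm rad}$ into $L_{\lambda_j}$ with strictly positive spectral shifts is the natural tool, but the sign of the inhomogeneous term $c_n|v|^{12/(n-6)}v$ must be tracked carefully as the factorization is unwound so that the correct comparison principle applies at each stage. Once this reduction is in place, the remainder of the proof is a straightforward dictionary between the ODE on $\mathbb{R}$, the radial PDE on $\mathbb{R}^n\setminus\{0\}$, and the classification in Proposition~\ref{prop:andrade-wei}.
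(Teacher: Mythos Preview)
Your reduction to nonnegative solutions has a genuine gap. The symmetry $v\mapsto -v$ (with the odd extension $c_n|v|^{12/(n-6)}v$) only tells you that solutions come in $\pm$ pairs; it does \emph{not} imply that every solution has a sign. A priori there could be sign-changing $\mathcal{C}^6$ solutions, and nothing in your argument excludes them before you pass to $u=\mathfrak{F}^{-1}(v)$ and invoke Proposition~\ref{prop:andrade-wei}, which classifies only \emph{positive} PDE solutions. Your subsequent step ``nontrivial nonnegative $\Rightarrow$ strictly positive'' is also not justified as written: from $P_{\rm rad}=L_{\lambda_1}L_{\lambda_2}L_{\lambda_3}$ and $v\geqslant 0$ you obtain $L_{\lambda_1}\bigl(L_{\lambda_2}L_{\lambda_3}v\bigr)=-c_n v^{(n+6)/(n-6)}\leqslant 0$, but you have no sign information on the intermediate functions $L_{\lambda_3}v$ or $L_{\lambda_2}L_{\lambda_3}v$, so the second-order strong maximum principle cannot be iterated down the chain of factors in the way you indicate.

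There is also a logical-order issue. The paper does not prove this lemma; it cites \cite[Lemma~5.1]{arXiv:2210.04376}, which is the \emph{same} source from which Proposition~\ref{prop:andrade-wei} is quoted. In that reference the ODE trichotomy is established first, by a direct shooting and phase-space analysis exploiting the conserved Hamiltonian $\mathcal{H}_{\rm rad}$ (this is precisely what rules out sign changes and traps bounded orbits on the closed level curves described in the paragraph after the definition of $\mathcal{H}$), and the PDE classification is then deduced from it together with the radial symmetry of singular solutions. Your route reverses this dependence: you are invoking Proposition~\ref{prop:andrade-wei} to derive one of its own ingredients.
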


\begin{proof}
	See \cite[Lemma 5.1]{arXiv:2210.04376}.
\end{proof}

\begin{definition}
	The conformal metrics 
	\begin{equation*}
		g_{\rm sph}=u_{\rm sph}^{\frac{4}{n-6}}\delta \quad {\rm and} \quad  g_{\rm cyl}=u_{\rm cyl}^{\frac{4}{n-6}}\delta
	\end{equation*}
	will be called spherical and cylindrical metrics, respectively.
	For each $\varepsilon\in(0,\varepsilon_*)$, the conformal metric given by
	\begin{equation*}
		g_{\varepsilon}=u_{\varepsilon}^{\frac{4}{n-6}}\delta 
	\end{equation*}
	will be called a Delaunay metric. 
	The family of Delaunay metrics, denoted by $\{g_\varepsilon\}_{\varepsilon\in(0,\varepsilon_*)}$ interpolates between the spherical metric $g_0=g_{\rm sph}$ and the cylindrical metric $g_{\varepsilon_*}=g_{\rm cyl}$.
\end{definition}

\subsection{Hamiltonian energy}
We now turn to a discussion of the existence and specific form of a family of homological integral 
invariants of solutions of equation \eqref{ourequation}. 
Let us start with the classical definition of Hamiltonian energy for a solution $v\in \mathcal{C}^{6}(\mathcal{C}_{\infty})$ to \eqref{ourPDEcyllimit}.
\begin{definition}
	For any real function $v\in \mathcal{C}^{6}(\mathcal{C}_{\infty})$, let us define its Hamiltonian energy $($with respect to \eqref{ourPDEcyllimit}$)$ $ \mathcal{H}:\mathcal{C}_{\infty}\times \mathcal{C}^{6}(\mathcal{C}_{\infty})\rightarrow\mathbb R$ by
	\begin{equation*}
		\mathcal{H}_{\rm cyl}(v):= \mathcal{H}_{\rm rad}(v)+ \mathcal{H}_{\rm ang}(v)+F(v),
	\end{equation*}
	where
	\begin{equation*}
		\mathcal{H}_{\rm rad}(v):=\frac{1}{2}{v^{(3)}}^2+\frac{K_4}{2}{v^{(2)}}^{2}+\frac{K_2}{2}{v^{(1)}}^2-\frac{K_0}{2}v^2
		+v^{(5)}v^{(1)}-v^{(4)}v^{(2)}-{K_4}
		v^{(3)}v^{(1)},
	\end{equation*}
	is the radial part,
	\begin{align*}
		\mathcal{H}_{\rm ang}(v)&:=-J_4\left(\partial_t^{(3)}\nabla_\theta v\partial_t\nabla_\theta v-|\partial_t^{(2)}\nabla_\theta v|^2\right)-\frac{J_2}{2}|\partial_t^{(2)}\nabla_\theta v|^2-\frac{J_1}{2}|\partial_t^{(2)}\nabla_\theta v|^2&\\
		&-\frac{J_0}{2}|\nabla_\theta v|^2+\frac{L_2}{2}|\partial_t^{(2)}\Delta_\theta v|^2+\frac{L_0}{2}|\partial_t^{(2)}\Delta_\theta v|^2+\frac{1}{2}|\Delta_{\theta}v|^2&
	\end{align*}
	is the angular part, and
	\begin{equation*}
		F(v):=\frac{c_n(n-6)}{2n}|v|^{\frac{2n}{n-6}}
	\end{equation*}
	is the nonlinear term.
\end{definition}

With this definition in hands, it is direct to prove that this energy is conserved along with solutions to \eqref{ourODE}, that is, the following identity holds
\begin{equation*}
	\dfrac{\ud}{\ud t} \mathcal{H}(t,v)\equiv0.
\end{equation*}
In other terms, there exists $\mathcal H_{v}\in\mathbb R$ such that
\begin{equation*}
	\mathcal{H}(v)(t)= \mathcal{H}(t,v)\equiv \mathcal{H}(v):= \mathcal{H}_{v}.
\end{equation*}

\begin{remark}
	Notice that setting
	\begin{equation*}
		G(v):=F(v)-\frac{K_0}{2}v^2
	\end{equation*}
	the real roots of $G$ are precisely the only equilibrium solutions to \eqref{ourODE}, namely
	\begin{equation*}
		v\equiv0 \quad {\rm and} \quad v\equiv\pm \varepsilon_{*},
	\end{equation*}
	where
	\begin{equation}\label{constantsolutions}
		\varepsilon_{*}:=K_0^{\frac{n-6}{12}}=\left(\frac{(n-6)(n-2)(n+2)}{8}\right)^{\frac{n-6}{6}}.
	\end{equation}
	is a dimensional constant.
\end{remark}

\begin{remark}
	By computing this energy on the cylindrical and spherical solutions, we find
	\begin{equation*}
		\mathcal{H}_{\rm sph}:= \mathcal{H}\left(v_{\rm s p h}\right)=0 \quad {\rm and} \quad  \mathcal{H}_{\rm cyl}:= \mathcal{H}\left(v_{\rm c y l}\right)<0.
	\end{equation*}
	Furthermore, we see that the level set 
	\begin{equation*}
		\{ \mathcal{H}\equiv0\} \cap\{v^{(1)}=0\}\cap \{v^{(3)}=0\}\cap \{v^{(5)}=0\}
	\end{equation*}
	consists entirely of the (homoclinic) solution curve of the spherical solution and the origin.
	Whereas, for each $H\in(0,-\mathcal{H}_{\rm cyl})$
	the level set 
	\begin{equation*}
		\{ \mathcal{H}\equiv H\} \cap\{v^{(1)}=0\}\cap \{v^{(3)}=0\}\cap \{v^{(5)}=0\}
	\end{equation*}
	is a closed curve associated to the Delaunay solution $v_\varepsilon$ for some $\varepsilon \in(\varepsilon_*, 1)$. 
\end{remark}

Our last lemma will be referred to as the strong comparison result to reflect that for two bounded solutions to be equal, they only need to coincide until the first order at the origin.
Now we prove that the Hamiltonian energy is a parameter that orders bounded solutions in the $(v, v^{(1)})$-phase plane.

\begin{lemmaletter}\label{lm:energyordering}
	Let $v_1,v_2\in \mathcal{C}^{6}(\mathbb{R})$ be bounded solutions to \eqref{ourODE}.
	Suppose that $v_1(0)=v_2(0)$.
	The following holds:
	\begin{itemize}
		\item[{\rm (i)}] If $v_1^{(1)}(0)>v_2^{(1)}(0) \geqslant 0$  or $v_1^{(1)}(0)<v_2^{(1)}(0) \leqslant 0$,  then $ \mathcal{H}(v_1)> \mathcal{H}(v_2)$. 
		\item[{\rm (ii)}] If $v_1^{(1)}(0)<v_2^{(1)}(0) \leqslant 0$  or $v_2^{(1)}(0)>v_1^{(1)}(0) \geqslant 0$,  then $ \mathcal{H}(v_1)< \mathcal{H}(v_2)$.
		\item[{\rm (iii)}]  If $v_1(0)=v_2(0)$,  then $ \mathcal{H}(v_1)= \mathcal{H}(v_2)$.
	\end{itemize}
	In particular, we see 
	\begin{equation*}
		\lim _{\varepsilon \nearrow 1}  \mathcal{H}(\varepsilon)=0,
	\end{equation*}
	where $ \mathcal{H}(\varepsilon):= \mathcal{H}(v_{\varepsilon})$ is the Hamiltonian energy of the associated Emden--Fowler solution.
\end{lemmaletter}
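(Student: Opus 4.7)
The plan is to leverage the conservation of $\mathcal{H}$ along solutions of \eqref{ourODE} together with the classification of bounded solutions in Lemma~\ref{lm:classification}, which restricts the relevant orbits to a small finite-dimensional family. Since $\frac{\ud}{\ud t}\mathcal{H}(v_i) \equiv 0$, I may evaluate $\mathcal{H}(v_i)$ at $t = 0$, where the hypothesis $v_1(0) = v_2(0)$ already kills every term depending only on $v$ (namely $F(v) - \frac{K_0}{2}v^2$) when I form the difference $\mathcal{H}(v_1) - \mathcal{H}(v_2)$. The remaining differences involve only the higher-order derivatives at $t = 0$, which are not free on the bounded class.

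Next, I would reduce to the Delaunay regime. By Lemma~\ref{lm:classification}, each bounded $v_i$ is either an equilibrium, a translate of the spherical solution, or a translate of a Delaunay solution $v_{\varepsilon_i}$; the first two cases are handled directly by substitution, while the last is parametrized by the necksize $\varepsilon_i \in (0, \varepsilon_*)$ and a phase shift $T_i \in \mathbb{R}$. Exploiting the reflectional symmetry of Delaunay profiles about their extrema, I would center the computation at the minimum $t_i$ of $v_i$, where $v_i^{(1)}(t_i) = v_i^{(3)}(t_i) = v_i^{(5)}(t_i) = 0$. At such a point the Hamiltonian collapses to
\[
\mathcal{H}(v_{\varepsilon_i}) = \tfrac{K_4}{2}\bigl(v_{\varepsilon_i}^{(2)}(t_i)\bigr)^2 - v_{\varepsilon_i}^{(4)}(t_i)\,v_{\varepsilon_i}^{(2)}(t_i) - \tfrac{K_0}{2}\,\varepsilon_i^2 + F(\varepsilon_i),
\]
which is a scalar function of $\varepsilon_i$ alone because the ODE together with the boundedness constraint determines $v_{\varepsilon_i}^{(2)}(t_i)$ and $v_{\varepsilon_i}^{(4)}(t_i)$ in terms of $\varepsilon_i$.

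The phase-plane comparison at the common value $v_1(0) = v_2(0) = c$ then proceeds in two substeps. First, I would show that $\varepsilon \mapsto \mathcal{H}(v_\varepsilon)$ is strictly monotone on $(0, \varepsilon_*)$, interpolating between the spherical endpoint $\mathcal{H}_{\rm sph} = 0$ and the cylindrical endpoint $\mathcal{H}_{\rm cyl} < 0$. Second, the projection of a Delaunay orbit to the $(v, v^{(1)})$ plane is a closed curve enclosing the cylindrical equilibrium; a larger slope $v_i^{(1)}(0) \geq 0$ at fixed $v_i(0) = c$ forces this projected curve to enclose a larger region, which corresponds to a strictly smaller necksize $\varepsilon_i$ and therefore, by the previous monotonicity, to a strictly larger $\mathcal{H}$. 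Items (i) and (ii) follow; item (iii), read with the intended condition $v_1^{(1)}(0) = v_2^{(1)}(0)$, is immediate from uniqueness of the bounded orbit through a point in phase space. The limit $\lim_{\varepsilon \nearrow 1}\mathcal{H}(v_\varepsilon) = \mathcal{H}(v_{\rm sph}) = 0$ then drops out of continuity of the parametrization combined with $v_\varepsilon \to v_{\rm sph}$ in the spherical limit.

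The hard part, I expect, is the strict monotonicity of $\varepsilon \mapsto \mathcal{H}(v_\varepsilon)$, which is not algebraic and must be extracted from the ODE structure. My backup plan is to use the decomposition $P_{\rm rad} = L_{\lambda_1} \circ L_{\lambda_2} \circ L_{\lambda_3}$ to split the sixth-order flow into three coupled second-order subsystems, each of which carries a sub-Hamiltonian amenable to classical second-order phase-plane analysis; the total $\mathcal{H}$ is then the sum of these sub-Hamiltonians up to explicit coupling terms, and their joint monotonicity in $\varepsilon$ can be read off from the nested structure of the orbits in each factor.
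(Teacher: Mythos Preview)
The paper does not give a self-contained proof of this lemma: its entire argument is the citation ``See \cite[Lemmas 4.20 and 4.21]{arXiv:2210.04376}.'' So there is no in-paper proof to compare your proposal against; you are attempting to supply what the authors outsource to their companion paper.

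Your outline is the right shape, but it has a real gap you should be aware of. The step ``a larger slope $v_i^{(1)}(0)$ at fixed $v_i(0)=c$ forces the projected curve in the $(v,v^{(1)})$-plane to enclose a larger region'' is not automatic for a sixth-order equation. The full phase space is six-dimensional, and distinct bounded orbits can in principle cross when projected to the $(v,v^{(1)})$-plane; the nesting picture you invoke is a genuine theorem for second-order autonomous ODEs but must be \emph{proved} here. Likewise, your claim for (iii) that ``uniqueness of the bounded orbit through a point in phase space'' settles the matter needs care: a point in $(v,v^{(1)})$ does not determine a sixth-order orbit, so you are implicitly asserting that the two-parameter family of bounded solutions (necksize and phase) injects into the $(v,v^{(1)})$-plane, which is exactly the nesting statement again. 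This is where the actual work in \cite{arXiv:2210.04376} lies, and it is not recoverable from the ingredients you list.

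Your backup plan also has a structural issue: the factorization $P_{\rm rad}=L_{\lambda_1}\circ L_{\lambda_2}\circ L_{\lambda_3}$ is a factorization of the \emph{linear} operator, not of the nonlinear flow $-P_{\rm rad}v + c_n v^{(n+6)/(n-6)}=0$, so it does not directly yield three decoupled second-order Hamiltonian subsystems. One can introduce auxiliary variables $w_1=L_{\lambda_3}v$, $w_2=L_{\lambda_2}w_1$ to get a coupled cascade, but the resulting sub-energies are not individually conserved and their coupling terms do not obviously have a sign. If you pursue this route you will need an additional monotonicity argument specific to the Delaunay family, which is again the content of the cited lemmas.
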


\begin{proof}
	See \cite[Lemmas 4.20 and 4.21]{arXiv:2210.04376}.
\end{proof}

\subsection{Linearized operator}
Now we study the linearized operator around Delaunay solutions. The heuristics are that when this operator is Fredholm, its indicial roots determine the rate at which singular solutions to the nonlinear problem \eqref{ourPDEcyl} converge to this limit solution near the isolated singularity. 
Here, we borrow some ideas from  \cite{MR1936047,MR1356375}. 

First, let us consider the following nonlinear operator $\mathcal{N}_{\rm cyl}: H^{6}(\mathcal{C}_T)\rightarrow L^{2}(\mathcal{C}_T)$ given by
\begin{equation*}
	\mathcal{N}_{\rm cyl}(v):=-P_{\rm cyl}v+c_nv^{\frac{n+6}{n-6}}.
\end{equation*}
Then, we compute its linearization around Delaunay solutions classified above, as follows

\begin{lemma}\label{lm:linearizedoperator}
	The linearization of $\mathcal{N}_{\rm cyl}: H^{6}(\mathcal{C}_T)\rightarrow L^{2}(\mathcal{C}_T)$ around an Emden--Fowler solution $v_{\varepsilon,T}$ is given by 
	\begin{equation}\label{linearization}
		\mathscr{L}_{\rm cyl}[v_{\varepsilon}](\psi):=-P_{\rm cyl}v+\hat{c}_nv_{\varepsilon}^{\frac{12}{n-6}}\psi,
	\end{equation}
	where $\hat{c}_n:=\frac{n+6}{n-6}c_n$.
	From now on, we simply denote $\mathscr{L}_{\rm cyl}[v_{\varepsilon}](\psi):=\mathscr{L}_{\rm cyl}^{\varepsilon}(\psi)$.
	Notice that $\mathscr{L}_{\rm cyl}^\varepsilon(\psi)=\ud\mathcal{N}_{\rm cyl}[v_{\varepsilon}](\psi)$ is the Fr\'echet derivative of the nonlinear functional $\mathcal{N}_{\rm cyl}$.
\end{lemma}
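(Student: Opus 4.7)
The plan is to verify directly that the map $\psi \mapsto -P_{\rm cyl}\psi + \hat c_n v_\varepsilon^{12/(n-6)}\psi$ is indeed the Fr\'echet derivative of $\mathcal{N}_{\rm cyl}$ at $v_\varepsilon$. Since $\mathcal{N}_{\rm cyl}$ splits as $\mathcal{N}_{\rm cyl}(v) = \mathcal{L}(v) + \mathcal{F}(v)$, where $\mathcal{L}(v) := -P_{\rm cyl} v$ is a linear differential operator of order six with constant coefficients (as in \eqref{cylindricalpaneitz}) and $\mathcal{F}(v) := c_n v^{(n+6)/(n-6)}$ is a pointwise nonlinearity, the computation reduces to handling each piece separately.

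First I would observe that $\mathcal{L}: H^{6}(\mathcal{C}_T) \to L^2(\mathcal{C}_T)$ is bounded and linear, so its Fr\'echet derivative at any point coincides with $\mathcal{L}$ itself. This contributes the $-P_{\rm cyl}\psi$ term to the linearization, accounting for the typographical $v$ in the statement (which should read $\psi$).

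For the nonlinear part, I would perform a first order Taylor expansion: for $\psi \in H^{6}(\mathcal{C}_T)$ with small norm and for $t \in \mathbb{R}$, one has the pointwise identity
\begin{equation*}
\mathcal{F}(v_\varepsilon + t\psi) - \mathcal{F}(v_\varepsilon) - t\,\tfrac{n+6}{n-6}\, c_n\, v_\varepsilon^{12/(n-6)}\psi = R(t,\psi),
\end{equation*}
where $R(t,\psi)$ is the quadratic remainder of the binomial expansion of $(v_\varepsilon + t\psi)^{(n+6)/(n-6)}$. This gives precisely the coefficient $\hat c_n = \tfrac{n+6}{n-6}c_n$ multiplying $v_\varepsilon^{12/(n-6)}\psi$, as claimed.

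The only step requiring care is verifying that the remainder satisfies $\|R(t,\psi)\|_{L^2(\mathcal{C}_T)} = \mathrm{o}(t\|\psi\|_{H^6(\mathcal{C}_T)})$ uniformly. Here the key input is that, by Proposition~\ref{prop:andrade-wei}(b) and Lemma~\ref{lm:classification}, the Emden--Fowler profile $v_\varepsilon$ is periodic and bounded between two positive constants $\varepsilon \leqslant v_\varepsilon \leqslant \varepsilon_*$, so multiplication by $v_\varepsilon^{12/(n-6)}$ defines a bounded operator $L^2(\mathcal{C}_T) \to L^2(\mathcal{C}_T)$. Combined with the Sobolev embedding $H^6(\mathcal{C}_T) \hookrightarrow L^\infty(\mathcal{C}_T)$ (valid on the bounded cylinder since $6 > n/2$ for small $n$, or via interpolation with $L^{2^\#}$ for larger $n$), the remainder is controlled by $\|\psi\|_{H^6}^2$. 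I expect this step to be the only genuine, if mild, obstacle; the rest is formal differentiation. Consequently the expression \eqref{linearization} agrees with $\mathrm{d}\mathcal{N}_{\rm cyl}[v_\varepsilon]$.
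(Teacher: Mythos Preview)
Your proposal is correct and follows essentially the same approach as the paper: both split $\mathcal{N}_{\rm cyl}$ into its linear part $-P_{\rm cyl}$ (whose derivative is itself) and the pointwise power nonlinearity, then Taylor expand $(v_\varepsilon+t\psi)^{(n+6)/(n-6)}$ to first order to extract the coefficient $\hat c_n v_\varepsilon^{12/(n-6)}$. The paper's proof is slightly more formal, computing only the directional derivative $\frac{\ud}{\ud t}\big|_{t=0}\mathcal{N}_{\rm cyl}(v_\varepsilon+t\psi)$ without the remainder analysis you sketch, so your extra care with the $L^2$ remainder estimate is more than what the paper actually provides.
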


\begin{proof} 
	By definition, we have 
	\begin{align}\label{linearizedcylvect}
		\mathscr{L}_{\rm cyl}[v_{\varepsilon}](\psi)=\frac{\ud}{\ud t}\Big|_{t=0}\mathcal{N}_{\rm cyl}(v_{\varepsilon}+t\psi).
	\end{align}
	Next, by routine computations, it follows 
	\begin{align*}
		\mathcal{N}_{\rm cyl}(v_{\varepsilon}+t\psi)-\mathcal{N}_{\rm cyl}(v_{\varepsilon})
		&=-P_{\rm cyl}v_{\varepsilon}+tP_{\rm cyl}\psi-(v_{\varepsilon}+t\psi)^{\frac{n+6}{n-6}}+P_{\rm cyl}v_{\varepsilon}+v_{\varepsilon}^{\frac{n+6}{n-6}}&\\
		&=tP_{\rm cyl}\psi-(v_{\varepsilon}+t\psi)^{\frac{n+6}{n-6}}+v_{\varepsilon}^{\frac{n+6}{n-6}},			
	\end{align*}
	which yields
	\begin{align*}
		\mathcal{N}_{\rm cyl}(v_{\varepsilon}+t\psi)-\mathcal{N}_{\rm cyl}(v_{\varepsilon})=tP_{\rm cyl}\psi-tc_n\left(-P_{\rm cyl}v+\frac{n+6}{n-6}c_nv_{\varepsilon}^{\frac{12}{n-6}}\psi\right)+\mathcal{O}(t^2) \quad {\rm as} \quad t\rightarrow 0
	\end{align*}
	which by taking $t\rightarrow0$ implies \eqref{linearizedcylvect}.
\end{proof}

Unfortunately, the linearized operator is not generally Fredholm since it has no closed range \cite[Theorem~5.40]{MR1348401}. 
This issue is caused by nontrivial elements on its kernel; these are called the Jacobi fields \cite{MR1936047}.  
Therefore, we must introduce suitable weighted Sobolev and H\"{o}lder spaces on which the linearized operator has a well-defined right-inverse, up to a discrete set on the complex plane.
For more details, see \cite[Section~2]{MR1763040}.

\begin{definition}
	Given $j,q\geqslant1$ and $\beta\in\mathbb{R}$, for any $v \in L_{\loc}^{q}(\mathcal{C}_T)$ define the following weighted Lebesgue norm
	\begin{equation*}
		\|v\|_{L_{\beta}^{q}(\mathcal{C}_T)}^{q}=\int_{0}^{\infty} \int_{\mathbb{S}^{n-1}}e^{-2\beta t}|v(t, \theta)|^{q} \ud\theta\ud t.
	\end{equation*}	
	Also, let us define the weighted Lebesgue space by 
	\begin{equation*}
		L_{\beta}^{q}(\mathcal{C}_T)=\left\{v\in L_{\loc}^{q}(\mathcal{C}_T) : \|v\|_{L_{\beta}^{q}(\mathcal{C}_T)}<\infty\right\}.
	\end{equation*}
	Similarly, consider the Sobolev spaces $W_{\beta}^{k,q}(\mathcal{C}_T)$ of functions with $k$ weak derivatives in $L^{q}$ having finite weighted norms.
	Here we also denote the Hilbert spaces
	\begin{equation*}
		W^{j,2}_{\beta}(\mathcal{C}_T)=H^{j}_{\beta}(\mathcal{C}_T) \quad {\rm and} \quad W^{j,q}(\mathcal{C}_T)=W^{j,q}(\mathcal{C}_T).
	\end{equation*}
	Notice that when $\beta=0$, we recover the classical Sobolev spaces. 
\end{definition}

\begin{definition}
	Given $j\geqslant1$, $\beta\in\mathbb{R}$ and $\zeta\in(0,1)$,
	for any $u \in \mathcal{C}_{\loc}^{0,\beta}(\mathcal{C}_T)$ define the following norm 
	\begin{equation*}
		\|v\|_{\mathcal{C}_{\beta}^{0,\zeta}(\mathcal{C}_T)}=\sup_{T>1} \sup \left\{\frac{e^{-\beta t_{1}} |v(t_{1}, \theta_{1})|-e^{-\beta t_{2}}|v(t_{2}, \theta_{2})|}{\ud_{\rm cyl}((t_{1}, \theta_{1}),(t_{2}, \theta_{2}))^{\zeta}}:(t_{1}, \theta_{1}),(t_{2}, \theta_{2}) \in\mathcal{C}_{T-1,T+1}\right\}.
	\end{equation*}
	For $m=0$, we weighted Hold\"{e}r space by 
	\begin{equation*}
		\mathcal{C}_{\beta}^{0,\zeta}(\mathcal{C}_T)=\left\{v\in \mathcal{C}_{\loc}^{0,\beta}(\mathcal{C}_T) : \|v\|_{\mathcal{C}_{\loc}^{0,\beta}(\mathcal{C}_T)}<\infty\right\}.
	\end{equation*}
	One can similarly define higher order weighted Hold\"{e}r spaces $\mathcal{C}_{\beta}^{m,\zeta}(\mathcal{C}_T)$ for $m\geqslant 1$.
\end{definition}

\begin{definition}\label{def:jacobifield}
	The {Jacobi fields} in the kernel of $\mathscr{L}_{\rm cyl}^{\varepsilon}:H^{6}_{\beta}(\mathcal{C}_T)\rightarrow L^{2}_{\beta}(\mathcal{C}_T)$, are the solutions $\psi\in H^{6}_{\beta}(\mathcal{C}_T)$ of the following sixth order linear equation,
	\begin{equation*}
		\mathscr{L}_{\rm cyl}^\varepsilon(\psi)=0 \quad {\rm on} \quad \mathcal{C}^n_T.
	\end{equation*}
\end{definition}

\subsection{Fourier eigenmodes}
We study the kernel of a linearized operator around a Delaunay solution by decomposing into its { Fourier eigenmodes}, a { separation of variables} technique. 

First, let us consider $\{\lambda_j,\chi_j(\theta)\}_{j\in\mathbb{N}}$ the eigendecomposition of the Laplace--Beltrami operator on $\mathbb{S}^{n-1}$ with the normalized eigenfunctions, 
\begin{equation}\label{spectrallaplacian}
	\Delta_{\theta}\chi_j(\theta)=-\lambda_{j}\chi_j(\theta).
\end{equation}
Here the eigenfunctions $\{\chi_j(\theta)\}_{j\in\mathbb{N}}$ are called spherical harmonics with associated sequence of eigenvalues $\{\lambda_j\}_{j\in\mathbb{N}}$ given by $\lambda_j=j(j+n-2)$ counted with multiplicity $\mathfrak{m}_j$, which are defined by  
\begin{equation*}
	\mathfrak{m}_0=1 \quad \mbox{and} \quad \mathfrak{m}_j=\frac{(2j+n-2)(j+n-3)!}{(n-2)!j!}.
\end{equation*}
In particular, we have $\lambda_0=0, \quad \lambda_1=\dots=\lambda_n=n-1$, $\lambda_{j}\geqslant 2n$, if $j>n$ and $\lambda_j\leqslant\lambda_{j+1}$.
Moreover, these eigenfunctions are the restrictions to $\mathbb{S}^{n-1}$ of homogeneous harmonic polynomials in $\mathbb{R}^n$. Here we denote by $V_j$ the eigenspace spanned by ${\chi_j(\theta)}$. 
Next, using \eqref{spectrallaplacian}, it is easy to observe 
\begin{equation}\label{spectralbilaplacian}
	\Delta^2_{\theta}\chi_j(\theta)=\lambda_j^2 \chi_j(\theta) \quad {\rm and} \quad \Delta^3_{\theta}\chi_j(\theta)=-\lambda_j^3 \chi_j(\theta).
\end{equation}
is the spectral data of $\Delta^2_{\theta}$ and $-\Delta^3_{\theta}$, respectively.

Now, using the expression \eqref{cylindricalpaneitz} in the formula \eqref{linearization}, we get
\begin{align*}
	\mathscr{L}_{\rm cyl}^{\varepsilon}(\psi)&:=\psi^{(6)}-K_{4}\psi^{(4)}+K_{2}\psi^{(2)}-K_{0}\psi-\hat{c}_nv_{\varepsilon}^{\frac{12}{n-6}}\psi\\
	&+2\Delta_{\theta}\psi^{(4)}-J_3\Delta_{\theta}\psi^{(3)}+J_2\Delta_{\theta}\psi^{(2)}-J_1\Delta_{\theta}\psi^{(1)}+J_0\Delta_{\theta}\psi+3\Delta^2_{\theta}\psi^{(2)}-L_0\Delta^2_{\theta}\psi+\Delta^3_{\theta}\psi.
\end{align*}
It is convenient to rearrange the terms as follows
\begin{align*}
	\mathscr{L}_{\rm cyl}^{\varepsilon}(\psi)&:=\psi^{(6)}-K_{4}\psi^{(4)}+2\Delta_{\theta}\psi^{(4)}+K_{2}\psi^{(2)}+J_2\Delta_{\theta}\psi^{(2)}-J_3\Delta_{\theta}\psi^{(3)}+3\Delta^2_{\theta}\psi^{(2)}-J_1\Delta_{\theta}\psi^{(1)}\\
	&-K_{0}\psi+J_0\Delta_{\theta}\psi-L_0\Delta^2_{\theta}\psi+\Delta^3_{\theta}\psi-\hat{c}_nv_{\varepsilon}^{\frac{12}{n-6}}\psi.
\end{align*}
Next, using \eqref{spectrallaplacian} and \eqref{spectralbilaplacian}, we can
project this operator on the eigenspaces, which gives us 
\begin{align}\label{jacobiequationproj}
	\mathscr{L}_{\rm cyl}^{\varepsilon,j}(\psi)&=\psi^{(6)}+(-K_4+2\lambda_j)\psi^{(4)}+\lambda_jJ_3\psi^{(3)}+(K_2+2J_2\lambda_j)\psi^{(2)}+\lambda_jJ_1\psi^{(1)}\\\nonumber
	&+\left(-K_0+J_0\lambda_j-L_0\lambda_j^2+\lambda_j^3-\hat{c}_nv_{\varepsilon}^{\frac{12}{n-6}}\right)\psi \quad {\rm for} \quad j\in\mathbb{N}.
\end{align}
Moreover, for any $\phi\in L^{2}(\mathbb{S}^{n-1})$, we write
\begin{equation*}
	\phi(t,\theta)=\sum_{j\in\mathbb N}\phi_j(t)\chi_j(\theta), \quad \mbox{where} \quad \phi_j(t)=\int_{\mathbb{S}^{n-1}}\phi(t,\theta)\chi_j(\theta)\ud\theta.
\end{equation*}
In other terms, $\phi_j$ is the projection of $\phi$ on the eigenspace $V_j$. 
Thus, to understand the kernel of $\mathscr{L}_{\rm cyl}^{\varepsilon}$, we consider the induced family of ODEs
\begin{equation*}
	\mathscr{L}_{\rm cyl}^{\varepsilon,j}(\psi_j)=0 \quad {\rm for} \quad j\in\mathbb{N}.
\end{equation*}

\subsection{Jacobi fields}
Now we investigate the growth/decay rate in which the Jacobi fields on the kernel of the linearized operator grow/decay. 
For this, we transform the sixth order equation \eqref{jacobiequationproj} into a first order equation on $\mathbb{R}^6$. 
More precisely, defining $\Psi=(\psi,\psi^{(1)},\psi^{(2)},\psi^{(3)},\psi^{(4)},\psi^{(5)})$, we conclude that \eqref{jacobiequationproj} is equivalent to the first order system 
\begin{equation*}
	\Psi'_j(t)=N_{\varepsilon,j}(t)\Psi_j(t) \quad {\rm in} \quad \mathbb R^6.
\end{equation*}
Here
\begin{equation*}
	N_{\varepsilon,j}(t)=
	\left[{\begin{array}{cccccc}
			0 & 1 & 0 & 0 & 0 & 0 \\
			0 & 0 & 1 & 0 & 0 & 0\\
			0 & 0 & 0 & 1 & 0 & 0\\
			0 & 0 & 0 & 0 & 1 & 0\\
			0 & 0 & 0 & 0 & 0 & 1\\
			0 & B_j & C_{j} & D_{j}& E_j & F_{\varepsilon,j}(t)\\
	\end{array} } \right],
\end{equation*}
where 
\begin{align*}
	B_j&=-K_4+2\lambda_j\\
	C_j&=\lambda_jJ_3\\
	D_j&=K_2+2J_2\lambda_j\\
	E_j&=\lambda_jJ_1\\
	F_{\varepsilon,j}(t)&=-K_0+J_0\lambda_j-L_0\lambda_j^2+\lambda_j^3-\hat{c}_nv_{\varepsilon}^{\frac{12}{n-6}}(t)
\end{align*}
are the coefficients of the projected Jacobi equation in \eqref{jacobiequationproj}.

Notice that as a consequence of Proposition~\ref{prop:andrade-wei}, one has that $N_{\varepsilon,j}(t)$ is a $T_\varepsilon$-periodic matrix.
Hence, the {monodromy matrix} associated with this ODE system with periodic coefficients is given by $M_{\varepsilon,j}(t)=\exp{\int_{0}^{t}N_{\varepsilon,j}(\tau)\ud\tau}$. Finally, we define the {Floquet exponents}, denoted by $\widetilde{\mathfrak{I}}_j^\varepsilon$,
as the complex frequencies associated to the eigenvectors of $M_{\varepsilon,j}(t)$, which forms a six-dimensional basis for the kernel of $\mathscr{L}_{\rm cyl}^{\varepsilon,j}$. 
Using Abel's identity, we get that $N_{\varepsilon,j}(t)$ is constant, which yields
\begin{equation*}
	\det(M_{\varepsilon,j}(t))=\exp{\int_{0}^{T}\trace{N}_{\varepsilon,j}(\tau)\ud\tau}=\exp\left(-{\int_{0}^{T}C_{\varepsilon,j}(\tau)\ud\tau}\right)=1.
\end{equation*}
Since $N_{\varepsilon,j}(t)$ has real coefficients, all its eigenvalues are pairs of complex conjugates, and so
\begin{equation*}
	\widetilde{\mathfrak{I}}_j^\varepsilon=\{\pm\rho_{1,\varepsilon,j},\pm{\rho}_{2,\varepsilon,j},\pm{\rho}_{3,\varepsilon,j}\}\subset\mathbb C \quad {\rm for} \quad j\in\mathbb N,
\end{equation*}
where 
\begin{equation*}
	\rho_{\ell,\varepsilon,j}=\alpha_{\ell,\varepsilon,j}+i\beta_{\ell,\varepsilon,j} \quad {\rm for} \quad \ell=1,2,3.
\end{equation*}
Then, the set of indicial roots of $\mathscr{L}_{\rm cyl}^{\varepsilon,j}$ are given by 
\begin{equation*}
	\mathfrak{I}_j^\varepsilon=\{\pm\beta_{1,\varepsilon,j},\pm{\beta}_{2,\varepsilon,j},\pm{\beta}_{3,\varepsilon,j}\}\subset\mathbb R \quad {\rm for} \quad j\in\mathbb N.
\end{equation*}
Moreover, for any $\psi\in \ker(\mathscr{L}_{\rm cyl}^{\varepsilon,j})$, we have
\begin{equation*}
	\psi=b^+_1\psi_{\varepsilon,j}^{1,+}+b_1^-\psi_{\varepsilon,j}^{-}+b_2^+\psi_{\varepsilon,j}^{2,+}+b_2^-\psi_{\varepsilon,j}^{2,-}+b_3^+\psi_{\varepsilon,j}^{3,+}+b_3^-\psi_{\varepsilon,j}^{3,-} \quad {\rm for} \quad j\in\mathbb N,
\end{equation*}
where 
\begin{equation*}
	\psi^{1,\pm}_{\varepsilon,j}(t)=e^{\pm\rho_{1,\varepsilon,j} t}, \quad {\psi}_{\varepsilon,j}^{2,\pm}(t)=e^{\pm{\rho}_{2,\varepsilon,j} t} \quad {\rm and} \quad {\psi}_{\varepsilon,j}^{3,\pm}(t)=e^{\pm{\rho}_{3,\varepsilon,j} t} \quad {\rm for} \quad j\in\mathbb N.
\end{equation*}
Hence, the exponential decay/growth rate of the Jacobi fields is controlled by quantity below
\begin{equation*}
	\beta^*_{\varepsilon,j}:=\min\{|\beta_{1,\varepsilon,j}|,|\beta_{2,\varepsilon,j}|,|\beta_{3,\varepsilon,j}|\}.
\end{equation*}
Therefore, the asymptotic properties of $\mathscr{L}_{\rm cyl}^{\varepsilon,j}$ are obtained by the study of $\mathfrak{I}^{\varepsilon}$. 

Let us define indicial root function $\beta:(0,1)\rightarrow (0,+\infty)$ given by $\beta(\varepsilon)=\beta^*_{\varepsilon,0}$
is the lowest indicial root of 
\begin{align*}
	\mathscr{L}_{\rm cyl}^{\varepsilon,0}(\psi)&=\psi^{(6)}-K_4\psi^{(4)}+K_2\psi^{(2)}+\left(-K_0+\hat{c}_nv_{\varepsilon}^{\frac{12}{n-6}}\right)\psi.
\end{align*}
Then, the (fundamental) period $T:(0,1)\rightarrow (0,+\infty)$ is given by
\begin{equation*}
	T(\varepsilon)=\frac{2\pi}{\beta(\varepsilon)}.
\end{equation*}
Let us keep the notation $\beta(\varepsilon)= \beta_\varepsilon$ and $T(\varepsilon)= T_\varepsilon$. 

The next lemma relates the fundamental period of the Delaunay solution with the indicial roots of the linearized operator around this solution.
From this, we show that the period function blows up.
We emphasize that this will be enough for later performing our bifurcation technique.

\begin{lemma}\label{lm:periodbehavvior}       
	The period function $T:(0,1)\rightarrow (0,+\infty)$ is an increasing and satisfies
	\begin{align}\label{monotonicityperiod}
		\lim_{\varepsilon \nearrow 1} T(\varepsilon)=\infty \quad {\rm and} \quad \lim_{\varepsilon \searrow 1} T(\varepsilon)=T_{\rm cyl}.
	\end{align}
	Here $T_{\rm cyl}:=T(\varepsilon_*)$ is the fundamental period of the cylindrical solution given by
	\begin{equation}\label{periodcylinder}
		T_{\rm cyl}=\frac{2\pi}{\beta_{\rm cyl}}
	\end{equation}
	where $\beta_{\rm cyl}\in\mathbb R$
	is the largest indicial root associated with the cylindrical solution. 
	Moreover, one has
	\begin{equation*}
		\sup_{\varepsilon\in(0,1)}\|v_{\varepsilon}\|_{L^{\infty}(\mathbb R)} \leqslant 1.
	\end{equation*} 
\end{lemma}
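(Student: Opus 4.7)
The plan is to extract all information from the fact that along the Delaunay family the Hamiltonian energy $\mathcal{H}(\varepsilon)$ monotonically parametrizes the closed trajectories in the phase space, combined with an analysis of the linearization at the two endpoints of the family (spherical and cylindrical). First I would recall from Lemma~\ref{lm:energyordering} that $\varepsilon \mapsto \mathcal{H}(\varepsilon)$ is a continuous strictly monotonic bijection from the Delaunay parameter interval onto $(\mathcal{H}_{\rm cyl},0)$, so the period can equivalently be viewed as a function of the energy level. Since the orbit of $v_\varepsilon$ in the phase plane $(v,v^{(1)},\dots,v^{(5)})$ is a closed curve lying on the level set $\{\mathcal{H}\equiv \mathcal{H}(\varepsilon)\}\cap\{v^{(1)}=v^{(3)}=v^{(5)}=0\}$, the period is the time required to traverse this curve.

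For the behavior near the cylindrical end I would linearize the ODE \eqref{ourODE} around the equilibrium $v \equiv \varepsilon_*$. The linearized equation is a constant-coefficient sixth order ODE whose characteristic polynomial is obtained by substituting $v = \varepsilon_* + \psi$ into \eqref{ourODE} and retaining the linear terms; this is precisely $\mathscr{L}_{\rm cyl}^{\varepsilon_*,0}$ with $v_\varepsilon$ replaced by $\varepsilon_*$. By definition of $\varepsilon_*$ in \eqref{constantsolutions}, the resulting operator admits a purely imaginary root $\pm i\beta_{\rm cyl}$ of smallest modulus, which by standard Hartman--Grobman/Lyapunov center arguments gives that nearby periodic orbits inherit fundamental period $2\pi/\beta_{\rm cyl}+\mathrm{o}(1)$. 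This yields $\lim_{\varepsilon\to\varepsilon_*}T(\varepsilon)=T_{\rm cyl}$ and simultaneously identifies $T_{\rm cyl}$ as in \eqref{periodcylinder}.

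For the spherical end I would argue that as $\varepsilon$ approaches the spherical limit we have $\mathcal{H}(\varepsilon)\to 0$, and the level set $\{\mathcal{H}=0\}\cap\{v^{(1)}=v^{(3)}=v^{(5)}=0\}$ is, by Lemma~\ref{lm:classification} and the remark following it, precisely the homoclinic orbit of the spherical solution $v_{\rm sph}$ together with the origin. The origin is a hyperbolic saddle of the sixth order system (the characteristic exponents of the linearization at $v\equiv 0$ have nonzero real parts because $K_0>0$), so any closed trajectory passing near a small neighborhood of $0$ must spend arbitrarily large time there; hence $T(\varepsilon)\to\infty$ as the trajectory converges to the homoclinic loop. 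Monotonicity of $T$ would then follow by combining this continuous parametrization with the absence of critical points of $T$ as a function of $\mathcal{H}$: explicitly, one differentiates the period integral along the family and uses the strict convexity of the effective one-dimensional potential (encoded by $G(v)=F(v)-\tfrac{K_0}{2}v^2$ restricted to the invariant reduction provided by $v^{(1)}=v^{(3)}=v^{(5)}=0$) to exclude sign changes in $\ud T/\ud\mathcal H$.

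The main obstacle will be the strict monotonicity of $T$, because for a sixth order ODE the period-versus-energy relation is not governed by a simple one-dimensional integral the way it is for classical second order conservative systems; a direct convexity argument requires a careful reduction. I would handle this either by (i) working with the reduced two-dimensional Hamiltonian system obtained after factorizing $P_{\rm rad}=L_{\lambda_1}\circ L_{\lambda_2}\circ L_{\lambda_3}$, which lets one iterate known period-monotonicity results for each factor, or (ii) appealing to the non-degeneracy of the Floquet exponent $\beta(\varepsilon)$ and the implicit function theorem to show that $\beta'(\varepsilon)\neq 0$ throughout, which is equivalent to monotonicity of $T(\varepsilon)=2\pi/\beta(\varepsilon)$. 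Finally, the uniform bound $\|v_\varepsilon\|_{L^\infty(\mathbb R)}\leqslant 1$ would be obtained by evaluating the conserved Hamiltonian at the maximum (where $v^{(1)}=v^{(3)}=v^{(5)}=0$) and using $\mathcal H(\varepsilon)\leqslant 0$ together with \eqref{constantsolutions} to compare the resulting algebraic relation on the maximal value with the cylindrical constant, giving the stated sup-norm control uniformly in $\varepsilon$.
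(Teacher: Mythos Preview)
Your overall architecture matches the paper's: both the paper and you lean on Lemma~\ref{lm:energyordering} for the energy parametrization and on the linearization at the constant solution to identify $T_{\rm cyl}=2\pi/\beta_{\rm cyl}$ via the characteristic polynomial of $\mathscr{L}^{\varepsilon_*,0}_{\rm cyl}$. The paper is in fact much terser than you: for the monotonicity of $T$ and both limits in \eqref{monotonicityperiod} it simply invokes Proposition~\ref{prop:andrade-wei} and Lemma~\ref{lm:energyordering}, i.e.\ it imports these facts from the companion classification paper rather than arguing them dynamically. Your homoclinic/saddle argument for $T(\varepsilon)\to\infty$ and your Lyapunov-center argument for $T(\varepsilon)\to T_{\rm cyl}$ are correct and supply the dynamical content that the paper leaves implicit. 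You also give a Hamiltonian argument for the uniform $L^\infty$ bound, which the paper's proof actually does not address at all.

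The one place where your proposal is genuinely shaky is the monotonicity of $T$. Route~(ii) is circular: showing $\beta'(\varepsilon)\neq0$ \emph{is} the monotonicity of $T=2\pi/\beta$, so the implicit function theorem only relocates the problem. Route~(i) is not obviously valid either: the factorization $P_{\rm rad}=L_{\lambda_1}\circ L_{\lambda_2}\circ L_{\lambda_3}$ is a factorization of the \emph{linear} radial operator, not of the nonlinear ODE \eqref{ourODE}, so there is no clean reduction of the sixth order conservative system to a tower of second order ones whose period-monotonicity lemmas you could iterate. The paper sidesteps this entirely by outsourcing monotonicity to the classification in Proposition~\ref{prop:andrade-wei}; if you want a self-contained proof you would need a genuinely different ingredient (e.g.\ a direct $\ud T/\ud\mathcal{H}$ computation adapted to the higher order Hamiltonian structure), and that is not yet in your outline.
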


\begin{proof}
	First, the monotonicity in \eqref{monotonicityperiod} is a direct consequence of Proposition~\ref{prop:andrade-wei} and Lemma~\ref{lm:energyordering}.
	In addition, $\beta_{\rm cyl}=\Im(\rho)\in\mathbb R$ is the greatest imaginary part of complex solutions to the algebraic equation
	\begin{equation*}
		z^6-K_4 z^4+K_2 z^2+\left(-K_0+\hat{c}_n\varepsilon_*^{\frac{12}{n-6}}\right)=0 \quad {\rm in} \quad \mathbb C.
	\end{equation*}
	Using the change of variables $\rho=z^2$, we arrive at 
	\begin{equation*}
		\mathfrak{p}(\rho)=0 \quad {\rm in} \quad \mathbb C,
	\end{equation*}
	where $\mathfrak{p}:\mathbb C\rightarrow\mathbb C$ is the indicial polynomial given by
	\begin{equation*}
		\mathfrak{p}(\rho)=\rho^3-K_4 \rho^2+K_2 \rho+\left(-K_0+\hat{c}_n\varepsilon_*^{\frac{12}{n-6}}\right)=0 \quad {\rm in} \quad \mathbb C.
	\end{equation*}
	Therefore, it is not hard to check that the last equation has one real solution and two complex solutions, which are conjugate.  
	Thus, we find
	\begin{equation}\label{rootcylinder}
		\beta_{\rm cyl}=\beta^*_{\varepsilon_*,0}=\max_{\rho\in \mathfrak{p}^{-1}(\{0\})} \Im(\rho)<+\infty.
	\end{equation}
	The proof is then concluded.
\end{proof}

\subsection{Bifurcation via  Morse index}
In this section, we prove Theorem~\ref{thm1}.
Notice that since $(\mathbb S^1\times\mathbb S^{n-1},g_{\mathbb S^1}\oplus g_{\mathbb S^{n-1}})$ is not Einstein, one cannot directly guarantee that \ref{itm:P} is in force.
However, this can be overcome using the lcf property combined with \eqref{qing-raske}.

\begin{proof}[Proof of Theorem~\ref{thm1}]
	Initially, since $(\mathbb S^1\times\mathbb S^{n-1},g_{\mathbb S^1}\oplus g_{\mathbb S^{n-1}})$ is locally conformally flat and $R(g_{\mathbb S^1}\oplus g_{\mathbb S^{n-1}})\geqslant 0$, it follows from Proposition~\ref{lm:aubin} that \ref{itm:A} holds as well.
	
	Next, for each $T>0$, we consider its associated $T$-periodic Emden--Fowler solution denoted by $v_T:=v_{\varepsilon}\in\mathcal{C}^6(\mathcal C_T)$ solving \eqref{ourPDEcyl} such that $\varepsilon:=v_T(0)\in(0,1)$ is the Delaunay (or necksize) parameter.
	Thus, let us define a conformal metric on $\overline{\mathcal{I}}_T\times\mathbb S^{n-1}$ with $\overline{\mathcal{I}}\subset\mathbb R$ denoting the closed interval $\overline{\mathcal{I}}_T:=[-T/2,T/2]$ of the form $g_{v_\varepsilon}=v_\varepsilon^{{4}/{(n-6)}}g_{\rm cyl}$,       
	where we used the identification 
	\begin{equation}\label{bridgeidentification}
		\mathbb S_{T}\times\mathbb S^{n-1}\simeq \overline{\mathcal{I}}_T\times\mathbb S^{n-1}:=\mathcal{C}^n_{T}.
	\end{equation}
	
	Now notice that since $T\rightarrow+\infty$ as    $\varepsilon\rightarrow0$, one has $v_T\rightarrow v_{\rm cyl}$, where 
	$v_{\rm cyl}$ is the cylindrical solution given by \eqref{cylindricalsolution}.
	Hence, one has that $g_T\rightarrow g_{\infty}$ 
	as $T\rightarrow+\infty$, where         
	$g_{\infty}=g_{\rm cyl}$ is the cylindrical metric, which
	is such that 
	\begin{equation*}
		Q_2({g_{\rm cyl}})\equiv(n-1)(n-2), \quad Q_4({g_{\rm cyl}})\equiv\frac{(n-1)(n^2-2n-2)}{8}, \quad {\rm and} \quad  Q_6({g_{\rm cyl}})\equiv Q_n,
	\end{equation*}
	where we recall that $Q_n>0$ is given by \eqref{const_q_curv}.
	Hence, applying \textcolor{red}{\ref{itm:A}}, we get
	\begin{equation*}
		\mathcal{Y}_6^{+}(\mathbb{S}_{T}^1 \times \mathbb{S}^{n-1},g_{\rm cyl})=\mathcal{Y}_6(\mathbb{S}_{T}^1 \times \mathbb{S}^{n-1},g_{\rm cyl})<\mathcal{Y}_6^{+}( \mathbb{S}^n,g_0).
	\end{equation*}
	Furthermore, we recall that from Lemma~\ref{lm:characterizationofcrititcalpoints}, we know that each critical point of $\mathcal{Q}_6$ in the conformal class $[g_{\rm cyl}]$ must be a constant $Q$-curvature metric on $\mathbb{S}_{T}^1 \times \mathbb{S}^{n-1}$. 
	We pull this constant $Q$-curvature metric on $\mathbb{S}_{T}^1 \times \mathbb{S}^{n-1}$ back to its universal cover $\mathbb{R} \times \mathbb{S}^{n-1}:=\mathcal{C}^n_\infty$, obtaining a smooth, positive $T$-periodic function $v_T: \mathcal{C}_\infty \rightarrow(0, \infty)$ satisfying \eqref{ourODE}. 
	From Lemma~\ref{lm:classification}, it holds that one the three alternatives hold: either the constant cylindrical solution $v_{\rm cyl}$, translates of the spherical solution $v_{\rm sph}$, or translates of a Delaunay solution $v_{\varepsilon}$ for some $\varepsilon \in(\varepsilon_*, 1)$.
	
	For any $T>0$, let us denote by $\mu(T)\in\mathbb N$, the number of $Q$-curvature metrics in the conformal class $[g_{\rm cyl}]$ on $\mathbb{S}_{T}^1 \times \mathbb{S}^{n-1}$.       
	Next, we express this number in terms of the period $T\in(0,1)$.
	
	\noindent{\bf Claim 1:} It holds that $\mu(T):=\lfloor \frac{T}{T_{\rm cyl}} \rfloor:=\kappa$.
	
	\noindent  
	We proceed by induction in the following way. 
	As before, we have the normalization $Q_6(g)=Q_n$.
	The cylindrical solution $v_{\rm cyl}$ is the only solution when $0<T \leqslant T_{\rm cyl}$, where $T_{\rm cyl}$ is given in \eqref{periodcylinder}.
	For $T_{\rm cyl}<T \leqslant 2 T_{\rm c y l}$ we have two constant $Q$-curvature metrics, namely the cylinder and the Delaunay metric with Delaunay parameter $\varepsilon\in(0,1)$ such that $T=T_\varepsilon$.      
	When $2 T_{\rm c y l}<T \leqslant 3 T_{\rm c y l}$ we obtain 3 constant $Q$-curvature metrics, namely the cylindrical solution $v_{\rm c y l}$, the Delaunay solution $v_\varepsilon$ such that $T_\varepsilon=T$, and the Delaunay solution $v_\alpha$ such that $T_\alpha=T / 2$. 
	Continuing inductively, when $(\kappa-1) T_{\rm cyl}<T \leqslant \kappa T_{\rm cyl}$, one can find $\kappa\in \mathbb N$ distinct constant sixth order $Q$-curvature metrics. 
	Namely the cylindrical solution $v_{\rm cyl}$ together with the Delaunay solution $v_{\varepsilon_\ell}$ with $T_{\varepsilon_\ell}=T / \ell$ for each $\ell\in\{1,2, \ldots, \kappa-1\}$, which proves the first claim.
	
	Next, we classify the blow-up behavior of the family above.
	
	\noindent{\bf Claim 2:} It follows 
	\begin{equation}\label{convergenceinftmany}
		\lim_{\varepsilon \nearrow +\infty}\|v_\varepsilon-v_{\rm sph}\|_{\mathcal{C}^\infty_{\rm loc}(\mathbb R)}=0
	\end{equation}
	uniformly on compact subsets, where $v_{\rm sph}$ is the spherical solution given by \eqref{sphericalsolution}.
	
	\noindent In fact, for each $T>T_{\rm cyl}$ the Delaunay solution $v_{\varepsilon}$ such that $T_{\varepsilon}=T$ are such that $v_{\varepsilon}(0)=\varepsilon$ and ${v}^{(1)}_\varepsilon(0)=0$.
	Thus, by Lemma~\ref{lm:energyordering}, these two initial conditions uniquely determine a solution to \eqref{ourODE}.
	Hence, from Lemma~\ref{lm:periodbehavvior}, we know that $\lim_{\varepsilon \nearrow 1} T_\varepsilon=+\infty$, which implies the convergence in \eqref{convergenceinftmany}.     
	Finally, since the estimate $\|v_\varepsilon\|_{\mathcal{C}^\infty(\mathbb R)}<1$ holds for all $\varepsilon>0$,
	this convergence is uniform on compact subsets by the Arzela--Ascoli theorem. 
	This proves the second claim.
	
	In conclusion, we prove the stability statement below.
	
	\noindent{\bf Claim 3:} $v_{\varepsilon_1}$ is the only stable critical point of $\mathcal{Q}_6$ among $\{v_{\rm c y l}, v_{\varepsilon_1}, v_{\varepsilon_2}, \ldots, v_{\varepsilon_{\ell-1}}\}$.
	
	\noindent Indeed, the function $w_{\varepsilon_\ell}=\partial_t {v}_{\varepsilon_\ell}$ satisfies $  \mathscr{L}_{\varepsilon_\ell}(w_{\varepsilon_\ell})=0$ in $\mathbb R$, where $\mathscr{L}_{\varepsilon_\ell}$ is the linearization of \eqref{ourODE} about $v_{\varepsilon_\ell}$. 
	Furthermore, observe that
	\begin{equation*}
		\left\{t \in\overline{\mathcal{I}}_T: w_{\varepsilon_\ell}>0\right\}=\bigcup_{j=-\lfloor \ell/2\rfloor}^{\lfloor \ell/2 \rfloor}\left(j T_{\varepsilon_\ell},\left(\frac{2 j+1}{2}\right) T_{\varepsilon_\ell}\right),
	\end{equation*}
	where $\lfloor \ell/2 \rfloor$ denotes the greatest non-negative integer less than or equal to $\ell/2$. 
	When $\ell \geqslant 2$ the number of nodal domains combined with Strum-Liouville theory implies $-\mathscr{L}_{\varepsilon_\ell}:\mathcal{C}^6(\mathbb R)\rightarrow \mathcal{C}^0(\mathbb R)$ has at least $\ell$ negative eigenvalues, and so $v_{{\varepsilon_\ell}}\in \mathcal{C}^6(\mathbb R)$ cannot be a stable critical point of $\mathcal{Q}_6$. Furthermore the function $w_0\in\mathcal{C}^6(\mathbb R)$ defined as $w_0(t)=\cos (\mu_{\rm cyl} t)$,
	where $\mu_{\rm cyl}>0$ is given by \eqref{rootcylinder}, satisfies $\mathscr{L}_{\rm c y l}(w_0)=0$, where $\mathscr{L}_{\rm c y l}:\mathcal{C}^6(\mathbb R)\rightarrow \mathcal{C}^0(\mathbb R)$ is the linearization of \eqref{ourODE} around $v_{\rm c y l}$. 
	When $T>2 T_{\rm c y l}$ the function $w_0$ has at least 2 disjoint regions on which it is positive, so $v_{\rm c y l}$ cannot be a stable critical point of $\mathcal{Q}_6$ for $T\gg1$ large.
	This proves Claim 2.
	
	From Claims 2 and 3, we conclude that $v_\varepsilon\in \mathcal{C}^\infty(\mathbb{S}_{T_\varepsilon}^1 \times \mathbb{S}^{n-1})$ minimizes $\mathcal{Q}_6$ over the conformal class $[g_{\rm cyl}]$, which yields
	\begin{align*}
		\mathcal{Y}_6^{+}(\mathbb{S}_{T_\varepsilon}^1 \times \mathbb{S}^{n-1},g_{\rm cyl}) & =\mathcal{Q}_6(g_{v_\varepsilon})= \frac{\frac{2}{n-6}\int_{\mathbb{S}^1 \times \mathbb{S}^{n-1}} Q_6({g_{v_\varepsilon}}) \ud \mathrm{v}_{g_{v_\varepsilon}}}{\operatorname{vol}_{g_{v_\varepsilon}}(\mathbb{S}^1 \times \mathbb{S}^{n-1})^{\frac{n-6}{n}}} \\
		& =\frac{2Q_n}{n-6} \frac{\operatorname{vol}_{g_{v_\varepsilon}}(\mathbb{S}^1 \times \mathbb{S}^{n-1})}{\operatorname{vol}_{g_{v_\varepsilon}}\left(\mathbb{S}^1 \times \mathbb{S}^{n-1}\right)^{\frac{n-6}{n}}} \\
		& =\frac{2 Q_n }{n-6}\frac{\int_{\mathcal{I}_{T_\varepsilon}} \int_{\mathbb{S}^{n-1}} v^{\frac{2 n}{n-6}}\ud \theta \ud t}{\left(\int_{\mathcal{I}_{T_\varepsilon}} \int_{\mathbb{S}^{n-1}} v^{\frac{2 n}{n-6}} \ud \theta \ud t\right)^{\frac{n-6}{n}}} \\
		& =\frac{2Q_n\omega_{n-1}^{n / 6}}{n-6} \left(\int_{\mathcal{I}_{T_\varepsilon}} v^{\frac{2 n}{n-6}} \ud t\right)^{n/6},
	\end{align*}
	where $\omega_{n-1}=|\mathbb{S}^{n-1}|$ denotes the $n$-dimensional Lebesgue measure of the standard Euclidean sphere.
	Finally, we let $\varepsilon \nearrow 1$ in the last identity to see
	\begin{align*}
		\mathbb{Y}_6^{+}\left(\mathbb{S}^1 \times \mathbb{S}^{n-1}\right) & \geqslant \lim_{\varepsilon\nearrow 1} \mathcal{Y}_6^{+}(\mathbb{S}_{T_\varepsilon}^1 \times \mathbb{S}^{n-1},g_{\rm cyl}) \\
		& =\frac{2Q_n\omega_{n-1}^{n / 6}}{n-6}\left(\int_{-\infty}^{+\infty}v_{\rm s p h}(t)^{\frac{2 n}{n-6}} \ud t\right)^{n / 6} \\\label{jessehint2}
		& =\frac{2Q_n\omega_{n-1}^{n / 6}}{n-6}\left(\int_{-\infty}^{+\infty}\cosh (t)^{-n} \ud t\right)^{n / 6} \\
		& =\frac{2Q_n\omega_{n-1}^{n / 6}}{n-6}\left(\int_0^{+\infty}\left(\frac{1+r^2}{2}\right)^{-n} r^{n-1} \ud r\right)^{n / 6}\\
		&=\mathcal{Q}_6(g_0)\\
		&=\mathcal{Y}_6^{+}(\mathbb{S}^n,g_0)\\
		&=\mathbb{Y}_6^{+}(\mathbb{S}^n),
	\end{align*}
	where $r=e^{-t}$. 
	This, in turn, completes our first main result.
\end{proof}

\section{The noncompact case}\label{sec:noncompactcase}
This section is based on a topological argument from \cite{MR3504948} to prove Theorem~\ref{thm2} as its corollaries.



\subsection{Riemannian coverings}
Notice that in this case the variational formulations as in Lemma~\ref{lm:characterizationofcrititcalpoints} are no longer available. 
To circumvent this issue and establish nonuniqueness results in this context, we combine infinite towers of coverings with the existence of a positive Green function \ref{itm:P} and the reversed Aubin inequality \ref{itm:A'}.

\begin{proof}[Proof of Theorem~\ref{thm2}]
	From the assumption on the group of deck transformations, we can find an infinite tower of Riemannian coverings satisfying
	\begin{equation*}
		(\widetilde{M}^n_{\infty}, \tilde{g}_{\infty}) \longrightarrow \cdots \longrightarrow(\widetilde{M}^n_\ell, \tilde{g}_\ell) \longrightarrow \cdots \longrightarrow(\widetilde{M}^n_2, \tilde{g}_2) \longrightarrow(\widetilde{M}^n_1, \tilde{g}_1) \longrightarrow(\widetilde{M}^n_0, \tilde{g}_0),
	\end{equation*}
	where $\widetilde{M}^n_\ell \rightarrow \widetilde{M}^n_0$ is an $d_\ell$-sheeted covering and $(\widetilde{M}^n_0, \tilde{g}_0)=(M^n, g)$, with $d_\ell \geqslant 2$ and $d_\ell \nearrow +\infty$ as $\ell\rightarrow +\infty$.
	Since $(\widetilde{M}^n_\ell, g_\ell)$ are locally isometric to $(\widetilde{M}^n_0, g_0)$. 
	Using the same argument as in \cite[Lemma 3.6]{MR2301449}, it is not hard to check that $(\widetilde{M}^n_\ell, \tilde{g}_\ell)$ also satisfies \ref{itm:P} and \ref{itm:A'}, which gives us
	\begin{equation*}
		{\Theta}_6(\widetilde{M}_\ell, \tilde{g}_\ell)=\frac{\int_{\widetilde{M}_\ell} Q_6(\tilde{g}_\ell) \ud\mathrm{v}_g}{\|Q_6(\tilde{g}_\ell)\|_{L^{\frac{2 n}{n+6}}(\widetilde{M}_\ell)}}
	\end{equation*}
	attains its maximum in each conformal class $[\tilde{g}_\ell]$ at some metric $\bar{g}_\ell\in[\tilde{g}_\ell]$ for all $\ell\in\mathbb N$. 
	Denote by $\tilde{h}_\ell$ the pullback to $\widetilde{M}_\ell$ of the metric $\bar{g}_1$, and note that $\tilde{h}_\ell \in[\tilde{g}_\ell]$. 
	
	Using that $Q_1=Q_6({\tilde{g}_1})>0$ is a positive constant and $\tilde{h}_\ell$ is locally isometric to $\tilde{g}_1$, we get that $Q_6({\tilde{g}_\ell})=Q_1$ for all $\ell \geqslant 1$, which yields
	\begin{align*}
		{\Theta}_6(\widetilde{M}_\ell, \tilde{g}_\ell) =\frac{\int_{\widetilde{M}_\ell} Q_6({\tilde{h}_\ell}) \ud\mathrm{v}_{\tilde{h}_\ell}}{\|Q_6({\tilde{h}_\ell})\|^2_{L^{\frac{2 n}{n+6}}(M)}}=Q_1^{-1} {\rm vol}_{\tilde{g}_1}(\widetilde{M}_1)^{-6/n}\left(\frac{d_\ell}{d_1}\right)^{-6/n} \searrow 0 \quad  {\rm as} \quad \ell \rightarrow +\infty.
	\end{align*}      
	From this, one can also see 
	\begin{equation*}
		\frac{2}{n-4} {\Theta}_6(\widetilde{M}_\ell, \tilde{h}_\ell)<\Theta_6(\mathbb{S}^n, g_0),
	\end{equation*}
	if $\ell \geqslant \ell_0$ for some $\ell_0 \gg 1$ sufficiently large.
	Hence, $h_{\ell_0}$ is not a maximizer of $\Theta_6$ in $[\tilde{g}_{\ell_0}]$. 
	By \ref{itm:A'}, there exists a maximizer of $\Theta_6$ in $[\tilde{h}_{\ell_0}]$, which is not homothetic to $[\tilde{h}_{\ell_0}]$. 
	The pullbacks to $\widetilde{M}^n_{\infty}$ of this maximizer and $\tilde{h}_{\ell_0}$ are two nonhomothetic complete metrics with constant positive $Q$-curvature in $[\tilde{g}_{\infty}]$. Now, we can do the same procedure replacing $(\widetilde{M}^n_1, \tilde{g}_1)$ with $(\widetilde{M}^n_{\ell_0}, \tilde{g}_{\ell_0})$.
	Therefore, by interactively repeating this process, the theorem is proved.
\end{proof}

\begin{remark}
	For example, as a consequence of Selberg--Malcev Lemma \cite[Section~7.5]{MR1299730}, any hyperbolic manifold has an infinite profinite completion due to the hence it admits finite-sheeted Riemannian coverings by hyperbolic manifolds of arbitrarily large volume.
	This fact also holds for symmetric spaces of hyperbolic or Euclidean type in general. 
	In this regard, the second summand $(\mathcal{S}^{n_2},g_{\mathcal{S}})$ represents either the standard Euclidean space $(\mathbb R^n,g_{\mathbb R^n})$ or Hyperbolic space $(\mathbb H^n, g_{\mathbb H^n})$.
\end{remark}

\subsection{Bifurcation via infinite tower of Riemannian coverings}   
Here we apply the last proposition to prove the second main result of the manuscript:

\begin{proof}[Proof of Corollary~\ref{cor3}]
	Initially, using a classical result from \cite{MR146301}, it follows that every symmetric space $\mathcal S^{n_2}$ of Euclidean type admits irreducible compact quotients $\Sigma=\mathcal{S}^{n_2}/ \Gamma$, and the same is valid for the Euclidean space. 
	Fixing, such a compact quotient $(\Sigma^{n_2}, g_{\Sigma})$ with the induced locally symmetric metric. 
	Since $\pi_1(\Sigma)$ is infinite and residually finite, it has infinite profinite completion. 
	Hence, it follows that $(M^n_{\infty}, g_{\infty})=(C^{n_1} \times \mathcal{S}^{n_2}, g_C \oplus g_{\mathcal{S}})$, and $(M^n_0, g_0)=(C^{n_1} \times \Sigma^{n_2}, g_C \oplus g_\Sigma$).
	Under the assumptions \ref{itm:P} and \ref{itm:A'}, the proof is finished as a direct application of Theorem~\ref{thm2}.
\end{proof}

\begin{proof}[Proof of Corollary~\ref{cor4}]
	First, we recall the conformal equivalence below
	\begin{equation*}
		\mathbb{S}^n \setminus \mathbb{S}^k \simeq \mathbb{S}^{n-k-1}\times \mathbb{H}^{k+1} \simeq \mathbb{S}^{n_1} \times \mathbb{H}^{n_2} = \widetilde{(\mathbb{S}^{n_1} \times \mathbb{M}^{n_2}(T))}:=\widetilde{M}^n_T
	\end{equation*}
	where $n_1=n-k+1$, $n_2=k+1$ and $\mathbb{M}^{n_2}(T)=\mathbb{H}^{n_2}\backslash \Gamma(T)$ being the quotient manifold under the periodic Kleinian group $\Gamma(T)\subset \mathbb{H}^{n_2+1}$ arising as holonomy representation of the fundamental group $\pi_1(M)$ with $(M^n,g)=(\mathbb{S}^{n_1} \times \mathbb{H}^{n_2},g_1\oplus g_2)$. 
	Second, using the locally conformally flat condition together with $R(g)\geqslant0$ and \eqref{qing-raske}, one can apply Proposition~\ref{prop:greenfunction1} to conclude that \ref{itm:A} is also true, which combined with \ref{itm:P} is enough to prove \ref{itm:A'}.
	Finally, we finish the proof by direct applying Corollary~\ref{cor3}. 
	In this case, the family $\{\widetilde{M}^n_T\}_{T>0}$ is a tower of finite-sheeted regular coverings with arbitrarily large volume.		
\end{proof}
\color{black}

\begin{remark}\label{rmk:connection}
	Finally, let us come back to the case $k=0$, for which one has the following conformal equivalence $\mathbb{S}^n \setminus \mathbb{S}^0 \simeq \mathbb{R}^n \setminus\{0\} \simeq \mathbb{S}^{n-1} \times \mathbb{R}$ with the identification $\mathbb{S}^0:=\{p,-p\}$ with $p=\mathbf{e}_1\in\mathbb S^n$ the north pole.
	Notice that the proof above can be adapted {\it mutatis mutandis}, using the compact quotients $M^n_T=\mathbb{S}^1(T)\times \mathbb{S}^{n-1} $ as in \eqref{bridgeidentification}, where $\mathbb{S}^1(T)=\mathbb{R} / T\mathbb{Z}$, which as above form a similar tower of finite-sheeted regular coverings with arbitrarily large volume. 
	Therefore, our technique produces minimizing solutions at each level that lift to pairwise nonhomothetic periodic solutions in the universal covering $\widetilde{M}^n_T=\mathbb{S}^{n-1} \times \mathbb{R}=\mathcal{C}^n_\infty$.
	This, in turn, shows that Corollary~\ref{cor3} is indeed an extension of Theorem~\ref{thm1} to a much broader situation.
\end{remark}

\begin{remark}
	Using this covering approach and
	motivated by the seminal work \cite{MR931204}, it would be interesting to understand under which conditions on the manifold $(M^n,g)$ the equality holds $\mathcal{M}_6(g_C\oplus g_{\mathcal{S}})=\mathcal{M}_{6,\Lambda}(g)$,
	where $\Lambda^k\subset M^n$ is $k$-dimensional submanifold and we recall that $g_k=g_{\mathbb S^n\setminus \mathbb S^k}$ is the standard round metric singular along an equatorial subsphere.
	For instance, when $M$ descends to a quotient $M/\Gamma$ for some finite subgroup $\Gamma\subset {\rm iso}(M)$.
	The most simple example is the locally conformally flat manifold $(M^n,g)\simeq(\mathbb S^n,g_0)$ and $\Lambda:=\mathbb S^n\setminus\mathscr{D}(\widetilde{M})$ is a Kleinian group arising as the image of the developing map $\mathscr{D}:\widetilde{M}\hookrightarrow\mathbb S^n$, which by definition is conformal and injective.
	This class of manifolds recovers the example given in Remark~\ref{rmk:connection}.
\end{remark}


\begin{acknowledgement}
	This paper was finished when the first-named author held a Post-doctoral position at the University of British Columbia, whose hospitality he would like to acknowledge.
	This project started when the first and second named authors visited the College University of New York, which they are thankful to R. G. Bettiol for the invitation.
	We would like to thank J. Ratzkin for reading the final manuscript and his compelling contributions.
	The authors warmly thank J. S. Case for his several remarks after the first version of this preprint was on Arxiv.
\end{acknowledgement}


\end{document}